\newcommand{\E}{\mathbb{E}}
\newcommand{\C}{\mathbb{C}}
\newcommand{\R}{\mathbb{R}}
\newcommand{\one}{\mathbf{1}}
\newtheorem{theorem}{Theorem}[section]
\newtheorem{lemma}[theorem]{Lemma}
\newtheorem{proposition}[theorem]{Proposition}
\theoremstyle{definition}
\newtheorem{remark}{Remark}[section]
\newcommand{\xx}{\mathbf{x}}
\newcommand{\yy}{\mathbf{y}}
\newcommand{\uu}{\mathbf{u}}
\newcommand{\vv}{\mathbf{v}}
\newcommand{\ww}{\mathbf{w}}
\newcommand{\zero}{\mathbf{0}}
\newcommand{\inner}[2]{\langle#1,#2\rangle}
\newcommand{\Aut}{\mathrm{Aut}}
\newcommand{\AutZ}{\mathrm{Aut}_{\zero}}
\newcommand{\T}{^{\mathsf{T}}}
\DeclareMathOperator*{\diag}{diag}
\DeclareMathOperator*{\tr}{tr}
\DeclareMathOperator*{\supp}{supp}
\begin{document}
\title{Semidefinite lower bounds for covering codes}
\author{Dion Gijswijt\thanks{Delft Institute of Applied Mathematics, Delft University of Technology, The Netherlands. E-mail: \href{d.c.gijswijt@tudelft.nl}{\texttt{d.c.gijswijt@tudelft.nl}}}\,\, and Sven Polak\thanks{Tilburg University, The Netherlands. E-mail: \href{s.c.polak@tilburguniversity.edu}{\texttt{s.c.polak@tilburguniversity.edu}}}}
\maketitle

\abstract{Let~$K_q(n,r)$ denote the minimum size of a $q$-ary \emph{covering code} of word length~$n$ and covering radius~$r$. In other words,~$K_q(n,r)$ is the minimum size of a set of~$q$-ary codewords of length~$n$ such that the Hamming balls of radius~$r$ around the codewords cover the Hamming space~$\{0,\ldots,q-1\}^n$. The special case~$K_3(n,1)$ is often referred to as the \emph{football pool problem}, as it is equivalent to finding a set of forecasts on~$n$ football matches that is guaranteed to contain a forecast with at most one wrong outcome. 

In this paper, we build and expand upon the work of Gijswijt (2005), who introduced a semidefinite programming lower bound on~$K_q(n,r)$ via matrix cuts. We develop techniques that strengthen this bound, by introducing new semidefinite constraints inspired by Lasserre's hierarchy for 0-1 programs and symmetry reduction methods, and a more powerful objective function. The techniques lead to sharper lower bounds, setting new records across a broad range of values of~$q$, $n$, and~$r$.}

\section{Introduction}
 Let~$q \geq 2$ and~$n\geq 1$ be integers. Then~$[q]^n := \{0,\ldots,q-1\}^n$ is the \emph{Hamming space} of~$q$-ary words of length~$n$. The \emph{Hamming distance} between $\uu,\vv\in [q]^n$ is denoted by $d(\uu,\vv)=|\{i\in\{1,\ldots,n\}: \uu_i\neq \vv_i\}|$. We denote the isometry group of $[q]^n$ by $\Aut(q,n)$. It consists of all permutations of $[q]^n$ obtained by permuting the $n$ positions and independently permuting the $q$ symbols in each of the $n$ positions. The group $\Aut(q,n)$ acts on subsets $S\subseteq [q]^n$, vectors $\xx \in\C^{[q]^n}$ and matrices $M\in \C^{[q]^n\times [q]^n}$ in the usual way by
\[
\sigma S=\{\sigma(a)\mid a\in S\},\quad (\sigma \xx)_{\uu}=\xx_{\sigma^{-1}(\uu)}, \quad (\sigma M)_{\uu,\vv}=M_{\sigma^{-1}(\uu),\sigma^{-1}(\vv)}
\]
for all $\sigma\in \Aut(q,n)$ and $\uu, \vv \in [q]^n$. For any integer~$0\leq r\leq n$ and any $\uu\in[q]^n$, we denote by 
\begin{align*}
B_r(\uu)&:=\{\vv\in [q]^n \mid d(\uu,\vv)\leq r\}\quad\text{and}\\
S_r(\uu)&:=\{\vv\in [q]^n \mid d(\uu,\vv)=r\} \notag 
\end{align*}
the \emph{Hamming ball} and \emph{Hamming sphere} of radius $r$ around $\uu$, respectively.

A $q$-ary \emph{code} of length~$n$ is a subset $C$ of $[q]^n$. The \emph{covering radius} of $C$ is the smallest integer~$r$ such that the Hamming balls of radius~$r$ around the codewords cover the Hamming space~$[q]^n$, i.e., $\bigcup_{\uu \in C} B_r(\uu) = [q]^n$. In this paper, we study the quantity 
\begin{align}
    K_q(n,r) := \text{min}\{ |C| \, \mid \, C \subseteq [q]^n \text{ has covering radius~$r$} \}. 
\end{align}

In other words, the problem of determining $K_q(n,r)$ amounts to finding the
smallest number of Hamming balls of radius~$r$ that cover the Hamming space~$[q]^n$. If $C\subseteq [q]^n$ has covering radius $r$ and size $K=|C|$, then we call $C$ an $(n,K,q,r)$-code. 

A popular application of covering codes is found in football pools. A football pool involves predicting the outcomes of $n$ football matches, where each match can result in three possible outcomes: a home team win, a draw, or an away team win. The objective is to find the smallest set of bets that is guaranteed to contain a bet with at most one incorrect outcome. This challenge, commonly known as the football pool problem, corresponds to the covering problem for $q = 3$ and $r = 1$.  For more details on the football pool problem, see~\cite{H95}. For an overview of results on covering codes and several of their applications, see~\cite{CHLL97}.

A fundamental problem in the study of covering codes is to determine tight bounds on $K_q(n,r)$. Upper bounds are commonly established by constructing an explicit code with covering radius~$r$. In this paper, we focus on establishing lower bounds on~$K_q(n,r)$.

\paragraph{Method of linear inequalities}
A key approach for deriving lower bounds on $K_q(n, r)$ is the method of linear inequalities. Here, we follow the exposition in~\cite[Chapter 6]{CHLL97}. Suppose we have a code $C \subseteq [q]^n $ and define  
\[
A_i(\uu) := |C \cap S_i(\uu)|  
\]
for any $\uu \in [q]^n$ and $i = 0, \ldots, n$. The method considers valid linear inequalities over a code that are of the form 
\begin{align}\label{eq:lineqofcode}
\sum_{i=0}^n \lambda_i A_i(\uu) \geq \beta \quad \text{for all } \uu \in [q]^n,  
\end{align}
where $\lambda_0, \ldots, \lambda_n \geq 0$ and $\beta > 0$. Such a set of inequalities is denoted by $(\lambda_0, \ldots, \lambda_n) \beta$. The following proposition shows how such a set leads to a lower bound on $ K_q(n, r) $.

\begin{proposition} \label{prop:lowerboundineq}
If every $(n, K, q, r)$-code satisfies the inequalities $(\lambda_0, \ldots, \lambda_n) \beta$, then  
\[
K \geq \frac{\beta q^n}{\sum_{i=0}^n \lambda_i \binom{n}{i} (q - 1)^i}.  
\] 
\end{proposition}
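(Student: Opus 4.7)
The plan is a straightforward double-counting argument: sum the hypothesized inequality \eqref{eq:lineqofcode} over all $\uu\in[q]^n$ and then rearrange.

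First, I would sum both sides of the inequality $\sum_{i=0}^n \lambda_i A_i(\uu) \geq \beta$ over all $\uu \in [q]^n$. The right-hand side contributes $\beta q^n$, since $|[q]^n|=q^n$. The left-hand side becomes
\[
\sum_{\uu\in[q]^n}\sum_{i=0}^n \lambda_i A_i(\uu)=\sum_{i=0}^n\lambda_i\sum_{\uu\in[q]^n}|C\cap S_i(\uu)|.
\]

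Next, I would evaluate the inner sum $\sum_{\uu\in[q]^n}|C\cap S_i(\uu)|$ by switching the order of summation. Observe that $\vv\in C\cap S_i(\uu)$ if and only if $\vv\in C$ and $d(\uu,\vv)=i$, so
\[
\sum_{\uu\in[q]^n}|C\cap S_i(\uu)|=\sum_{\vv\in C}|\{\uu\in[q]^n : d(\uu,\vv)=i\}|=\sum_{\vv\in C}|S_i(\vv)|=K\binom{n}{i}(q-1)^i,
\]
using that $|S_i(\vv)|=\binom{n}{i}(q-1)^i$ for every $\vv\in[q]^n$ (choose the $i$ disagreement positions and one of $q-1$ alternative symbols per position).

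Combining, we obtain $K\sum_{i=0}^n\lambda_i\binom{n}{i}(q-1)^i \geq \beta q^n$. Since $\lambda_i\geq 0$ and $\beta>0$, the coefficient $\sum_{i=0}^n\lambda_i\binom{n}{i}(q-1)^i$ must be positive (otherwise $\beta q^n\leq 0$, a contradiction), so dividing yields the stated bound. There is no real obstacle here; the only mild subtlety is checking that the denominator is strictly positive so the division is valid, which follows automatically from the hypothesis $\beta>0$.
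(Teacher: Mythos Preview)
Your proof is correct and follows essentially the same double-counting argument as the paper: sum the inequality over all $\uu\in[q]^n$, swap the order of summation to rewrite $\sum_{\uu}A_i(\uu)$ as $\sum_{\vv\in C}|S_i(\vv)|=K\binom{n}{i}(q-1)^i$, and rearrange. Your extra remark on the positivity of the denominator is a nice touch that the paper leaves implicit.
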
  

\begin{proof}  
By summing the inequalities in~$\eqref{eq:lineqofcode}$ over all $\uu \in [q]^n$, we obtain  
\begin{align*}  
\beta q^n \leq \sum_{\uu \in [q]^n} \sum_{i=0}^n \lambda_i A_i(\uu)
&= \sum_{i=0}^n \lambda_i \sum_{\uu \in [q]^n} A_i(\uu)
= \sum_{i=0}^n \lambda_i \sum_{\vv \in C} |S_i(\vv)| 
= |C| \sum_{i=0}^n \lambda_i \tbinom{n}{i} (q - 1)^i. \qedhere  
\end{align*}  
\end{proof}  
The \emph{sphere covering inequalities} of the form  
\begin{align}
\sum_{i=0}^r A_i(\uu) \geq 1 \quad \text{for all } \uu \in [q]^n 
\end{align}
in combination with Proposition~\ref{prop:lowerboundineq} lead to the \emph{sphere covering bound}, given by  
\begin{align}
K_q(n, r) \geq \frac{q^n}{\sum_{i=0}^r \tbinom{n}{i} (q - 1)^i}.  
\end{align}

Several additional valid inequalities have been derived in the literature, especially in the binary case ($q = 2$), by analyzing how elements of $B_s(\uu)$ can be covered for $s = 1, 2, 3$. For $s = 1$, this results in the Van Wee inequalities \cite{VanWee88, VanWeeThesis}, given by:  
\begin{align}
\sum_{i=0}^{r-1} \left\lceil \frac{n+1}{r+1} \right\rceil A_i(\uu) + A_r(\uu) + A_{r+1}(\uu) \geq \left\lceil \frac{n+1}{r+1} \right\rceil  
\end{align}
which gives an improvement over the sphere covering bound when $r + 1$ does not divide $n + 1$.  
Additional inequalities can be found in articles by Johnson~\cite{Johnson72}, Zhang~\cite{Zhang91}, and Zhang and Lo~\cite{ZL92}.  

By taking nonnegative linear combinations of inequalities, for instance by summing over sets $S_i(\uu)$, new inequalities can be derived. Utilizing the fact that the $A_i(\uu)$ are integers, $(\lambda_0, \ldots, \lambda_n) \beta$ implies $(\lceil \lambda_0 \rceil, \ldots, \lceil \lambda_n \rceil) \lceil \beta \rceil$. This way, strengthened bounds can be found. For instance, the van Wee inequalities can be deduced from the sphere covering inequalities by summing over $B_1(\uu)$, multiplying by $\tfrac{1}{r+1}$ and rounding up. Habsieger and Plagne extended this method to generate numerous new lower bounds for binary and ternary covering codes~\cite{HP00}.

\paragraph{Semidefinite programming bounds}
In \cite[Section 5.3.1]{DionThesis} a linear programming lower bound for $K_q(n,r)$ is given that is analogous to the Delsarte bound for error correcting codes~\cite{Del73}. 

If every code $C\subseteq [q]^n$ with covering radius $r$ satisfies the inequalities $(\lambda_0,\ldots,\lambda_n)\beta$, then 
\[
K_q(n,r) \geq \min_x q^n x_0,
\]
where the minimum ranges over all nonnegative vectors $x=(x_0,x_1,\ldots,x_n)^{\sf T} \in \R^{n+1}$ that satisfy
\begin{enumerate}[(i)]
\item $\sum_{i=0}^n x_i P_k(i) \geq 0$,
\item $\sum_{i=0}^n x_i \sum_{j=0}^n \lambda_j \alpha_{i,j}^k \geq \beta x_0$,
\item $\sum_{i=0}^n (x_0-x_i) \sum_{j=0}^n \lambda_j \alpha_{i,j}^k \geq \beta (1-x_0)$,
\end{enumerate}
for all~$k=0,\ldots,n$. Here, $P_k(i):= \sum_{s=0}^k (-1)^s \tbinom{i}{s}\tbinom{n-i}{k-s}(q-1)^{k-s}$ are the \emph{Krawtchouk polynomials}, and the numbers $\alpha_{i, j}^k$ are given by $\alpha_{i, j}^k= |\{ \mathbf{v}\in[q]^n \mid d(\mathbf{0}, \mathbf{v}) = i,\, d(\mathbf{v}, \mathbf{u}) = j \}|$   for any~$\uu \in[q]^n$ with $d(\mathbf{0}, \mathbf{u}) = k$, and can be expressed as
\[
\alpha_{i, j}^k = \begin{cases}  
\sum_{\substack{p, t \\ t + p = k + i - j}} \tbinom{k}{t - p, p} \tbinom{n - k }{i - t} (q - 1)^{i - t} (q - 2)^{t - p} & \text{if } q \geq 3, \\
\sum_{\substack{t \\ 2t = k + i - j}} \tbinom{k}{t} \tbinom{n - k}{i - t} & \text{if } q = 2.  
\end{cases}  
\]

The $n+1$ variables~$x_i$ correspond to the distance distribution of the code. Where in the Delsarte bound variables corresponding to forbidden distances are set to zero, the bound on covering codes has inequalities (ii) and (iii) on the variables induced by the sphere covering inequalities, or any other valid set of inequalities $(\lambda_0,\ldots, \lambda_n)\beta$. 

The Delsarte bound can be viewed as a semidefinite program 
where the $[q]^n\times [q]^n$ matrix variable is constrained to be invariant under $\Aut(q,n)$ (i.e., it is an element of the Bose-Mesner algebra of the Hamming scheme). Since the Bose-Mesner algebra has dimension $n+1$, the matrix variable can be expressed in terms of $n+1$ real variables (corresponding to the distance distribution of the code) and because the Bose-Mesner algebra is commutative, the semidefiniteness constraint can be reduced to linear constraints in the $n+1$ variables by simultaneously diagonalizing the matrices in the Bose-Mesner algebra. This corresponds to inequalities (i).

For error correcting codes, strengthened SDP bounds based on the distribution of triples~\cite{Sch05,GST06} of codewords and four-tuples~\cite{GMS12,LPS2017} have been obtained. The main complication for these bounds is that the matrix variable is now an element of a non-commutative algebra. In the case of triple bounds, the relevant algebra consists of the $[q]^n\times [q]^n$ matrices that are invariant under $\AutZ(q,n)$, the stabilizer subgroup with respect to the zero word. This algebra coincides with the Terwilliger algebra of the Hamming scheme. To reduce the SDP to an equivalent one of polynomial size, an explicit block diagonalization of this algebra had to be obtained. 

An analogous bound for $K_q(n,r)$ based on triples of codewords, was obtained by Gijswijt in~\cite[Section 5.3.2]{DionThesis} using the method of matrix cuts from~\cite{LS}. Here, we develop techniques to strengthen this bound. We introduce new semidefinite constraints inspired by the Lasserre Hierarchy for 0-1 programs~\cite{Lasserre, Lasserre2001b} and symmetry reduction, alongside a more powerful objective function. The techniques lead to sharper lower bounds, improving the best known bounds for a broad range of values of~$q$, $n$, and~$r$.

The bound we study in this paper is a covering analogue to Schrijver's first SDP bounds for error-correcting codes in the finite Hamming space~\cite{Sch05}. Such bounds can be interpreted as arising from submatrices corresponding to 3-point configurations in the second level of the Lasserre hierarchy~\cite{Lasserre} applied to a polynomial optimization formulation of the problem. In the context of discrete geometry and spherical codes, Riener, Rolfes, and Vallentin~\cite{RRV26} recently proposed a hierarchy for spherical covering codes, inspired by hierarchies for packing problems in discrete geometry~\cite{LV15}. In our work, we extend Schrijver's packing bound to covering codes in the finite Hamming space. While in principle one could also define a full hierarchy for covering codes in this setting, practical obstacles (in particular, the rapidly increasing number of variables and the size of the block matrices) make the full second and higher levels of the Lasserre hierarchy difficult to compute. Therefore, we focus on a 3-point bound and its symmetry reduction, and strengthen it as much as we can by introducing additional constraints and optimizing the choice of objective function.

\paragraph{Outline of the paper}
In Section~\ref{sec:generalsdp} we describe our SDP bound for covering codes in Theorem~\ref{thm:originalbound}, before applying symmetry reduction. In Section~\ref{sec:terwilliger} we describe the Terwilliger algebra of the binary and nonbinary Hamming schemes, which enables symmetry reduction of the bound from Section~\ref{sec:generalsdp}. In Section~\ref{sec:binary} we detail the symmetry reduction in the binary case ($q=2$) and present the reduced bound in Theorem~\ref{thm:sdplowerbound}. Section~\ref{sec:nonbinary} provides the reduced version of our SDP bound for the nonbinary case in Theorem~\ref{thm:sdplowerboundqary}. 
Finally, we present our improved bounds in tables in Section~\ref{sec:tables}, with expanded versions of our numerical results in Appendices~\ref{appendix:tablesbinary} and~\ref{appendix:tablesqary}.

\section{The SDP bound for covering codes \label{sec:generalsdp}}
Given a complex matrix $A$, we write $A\T$ for the transpose of $A$ and $A^*$ for the complex conjugate transpose of $A$. So $A^*=A\T$ if $A$ is a real matrix. We apply the same notation for vectors and scalars. So for $z\in \C$ we denote the complex conjugate of $z$ by $z^*$. Given complex matrices $A,B$ of the same size, we denote by $\inner{A}{B}=\tr (A^*B)$ their Frobenius inner product. If $A$ is a square matrix, we write $A\succeq 0$ to denote that $A$ is positive semidefinite. Throughout the paper, we write $J$ for the all-ones matrix, and $I$ for the identity matrix, where the dimensions are clear from the context.

Given a square matrix $A$ and $c\geq 0$ we will use the following notation:
\[
R(c,A)=\begin{pmatrix}c&(\diag A)^*\\\diag A&A \end{pmatrix}\quad \text{and}\quad R(A)=R(1,A).
\]
Note that for positive semidefinite $A$, the matrix $R(c,A)$ is PSD if and only if either $c=0$ and $A=0$, or $c > 0$ and $cA - (\diag A)(\diag A)^* \succeq 0$, which follows by taking Schur complements. 
We remark that throughout the paper, we will mostly work with real matrices. In particular, the matrices in our SDP bounds will be real valued.  

Let $q\geq 2$ and $n\geq 1$ be integers, and denote $\E=[q]^n$. We identify $[q]^n$ with~$(\mathbb{Z}/q\mathbb{Z})^n$, so that $\E $ carries an additive group structure. Let $C\subseteq \E$. We define the $0,1$-matrix~$M_C$ of size~$|\E| \times |\E|$ as follows:
\[
(M_C)_{\uu,\vv} = \begin{cases} 1 &\text{if $\uu,\vv \in C$,} \\0 &\text{otherwise.}\end{cases} 
\]
Furthermore, we define the following real matrices associated to $C$:
\begin{align}
 M&:= |\Aut(q,n)|^{-1} \sum_{\sigma \in \Aut(q,n) } M_{\sigma C}, \notag \\
M'&:= |\Aut(q,n)|^{-1} \sum_{\substack{\sigma \in \Aut(q,n)\\ \zero \in \sigma C }} M_{\sigma C}, \quad \text{and} \quad
M'':= |\Aut(q,n)|^{-1} \sum_{\substack{\sigma \in \Aut(q,n)\\ \zero \not\in \sigma C }} M_{\sigma C}.
\end{align}

We immediately observe the following relations for the entries of the matrices $M,M',M''$.
\begin{proposition}[Basic inequalities and symmetry]\label{prop:first}
The matrix $M$ is $\Aut(q,n)$-invariant and $M',M''$ are $\AutZ(q,n)$-invariant. Moreover, for all $\uu,\vv\in\E$ we have
\begin{itemize}
\item[\textup{(i)}] $M_{\uu,\vv}=M'_{\zero,\vv-\uu}$ \quad and \quad $M''_{\uu,\vv}=M'_{\zero,\vv-\uu}-M'_{\uu,\vv}$,
\item[\textup{(ii)}] $0 \leq M'_{\uu,\vv} \leq M'_{\zero,\uu}$,
\item[\textup{(iii)}] $0 \leq M''_{\uu,\vv} \leq M''_{\uu,\uu}$, 
\item[\textup{(iv)}] $M'_{\uu,\vv}=M'_{\uu',\vv'}$ if $\{\zero,\uu',\vv'\}=\sigma\{\zero,\uu,\vv\}$ for some $\sigma\in \Aut(q,n)$.
\end{itemize}
\end{proposition}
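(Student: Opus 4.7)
The strategy is to unfold each matrix entry into a count of automorphisms $\sigma\in\Aut(q,n)$ and then manipulate these counts through reindexing bijections. Since $(M_{\sigma C})_{\uu,\vv}=1$ iff $\uu,\vv\in\sigma C$, the defining averages give
\[
M_{\uu,\vv}=\tfrac{1}{|\Aut(q,n)|}|\{\sigma:\uu,\vv\in\sigma C\}|,\quad
M'_{\uu,\vv}=\tfrac{1}{|\Aut(q,n)|}|\{\sigma:\zero,\uu,\vv\in\sigma C\}|,
\]
and $M''_{\uu,\vv}=\tfrac{1}{|\Aut(q,n)|}|\{\sigma:\uu,\vv\in\sigma C,\ \zero\notin\sigma C\}|$. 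In particular $M=M'+M''$, which will take care of the second identity in (i) and reduces most of the work to manipulating these sets of automorphisms.

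For the invariance statements I would use $\pi M_{\sigma C}=M_{\pi\sigma C}$ and reindex $\sigma\mapsto\pi^{-1}\sigma$: this gives $\pi M=M$ for every $\pi\in\Aut(q,n)$ directly, while for $M'$ (and similarly $M''$) the extra constraint $\zero\in\sigma C$ is preserved under the reindexing precisely when $\pi(\zero)=\zero$, i.e.\ $\pi\in\AutZ(q,n)$. For part (i), the key step is to observe that the translation $\tau\colon\xx\mapsto \xx-\uu$ (interpreting the subtraction coordinatewise modulo $q$) is a product of cyclic symbol-permutations, hence an element of $\Aut(q,n)$, and satisfies $\tau\uu=\zero$, $\tau\vv=\vv-\uu$. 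Reindexing the count for $M_{\uu,\vv}$ via $\sigma\mapsto\tau^{-1}\sigma$ then produces the count $|\{\sigma:\zero,\vv-\uu\in\sigma C\}|$, which equals $|\Aut(q,n)|\cdot M'_{\zero,\vv-\uu}$ because the constraint $\zero\in\sigma C$ is automatic. The second identity then follows from $M=M'+M''$.

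Parts (ii) and (iii) are immediate from the above counting formulas: the set of $\sigma$'s counted by the left-hand side is in each case a subset of the set of $\sigma$'s counted by the right-hand side (for (ii), $\{\zero,\uu,\vv\}\subseteq\sigma C$ implies $\{\zero,\uu\}\subseteq\sigma C$; for (iii), adding $\vv\in\sigma C$ to the constraints can only shrink the set). For (iv), note first that $M'_{\uu,\vv}$ depends only on the unordered set $\{\zero,\uu,\vv\}$, and if $\{\zero,\uu',\vv'\}=\pi\{\zero,\uu,\vv\}$ for $\pi\in\Aut(q,n)$, then the reindexing $\sigma\mapsto\pi^{-1}\sigma$ provides a bijection between $\{\sigma:\{\zero,\uu,\vv\}\subseteq\sigma C\}$ and $\{\sigma:\{\zero,\uu',\vv'\}\subseteq\sigma C\}$, yielding $M'_{\uu,\vv}=M'_{\uu',\vv'}$. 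There is no serious obstacle here; the only point requiring a moment of care is the identification of the translation $\xx\mapsto\xx-\uu$ as an element of the isometry group $\Aut(q,n)$, which legitimizes the reindexing used in (i).
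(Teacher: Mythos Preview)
Your proposal is correct and follows essentially the same approach as the paper: both arguments express the entries of $M,M',M''$ as (normalized) counts of automorphisms $\sigma$ satisfying containment conditions, and then use reindexing bijections on $\Aut(q,n)$ together with the observation that $M'_{\zero,\ww}$ and $M_{\zero,\ww}$ coincide (your ``$\zero\in\sigma C$ is automatic'', the paper's ``$M''_{\zero,\ww}=0$'') to establish (i), while (ii)--(iv) follow from set inclusion on these counts. One cosmetic slip: the reindexing for (i) should be $\sigma\mapsto\tau\sigma$ rather than $\sigma\mapsto\tau^{-1}\sigma$ to land on $\{\sigma:\zero,\vv-\uu\in\sigma C\}$, but the idea is clearly right.
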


\begin{proof}
The fact that $M,M',M''$ are invariant under the respective groups follows directly by their construction. To see (i), note that $M''_{\zero,\ww}=0$ for every $\ww\in \E$, so 
\[
M_{\uu,\vv}=M_{\zero, \vv-\uu}=M'_{\zero,\vv-\uu}+M''_{\zero,\vv-\uu}=M'_{\zero,\vv-\uu}
\]
and 
\[
M''_{\uu,\vv}=M_{\uu,\vv}-M'_{\uu,\vv}=M'_{\zero,\vv-\uu}-M'_{\uu,\vv}.
\]
To see inequalities (ii) and (iii), note that $M'$ and $M''$ are nonnegative linear combinations of the matrices $M_{\sigma C}$ and $0\leq \left(M_{\sigma C}\right)_{\uu,\vv}\leq\left(M_{\sigma C}\right)_{\uu,\uu}$. 

To see (iv), we observe that
\[
M'_{\uu,\vv}=|\Aut(q,n)|^{-1}\cdot |\{\sigma\in \Aut(q,n)\mid \{\zero,\uu,\vv\}\subseteq \sigma C\}|
\]
only depends on the orbit of $\{\zero,\uu,\vv\}$.
\end{proof}

The matrices $M'$ and $M''$ satisfy the following conditions.
\begin{proposition}[Semidefiniteness]\label{prop:second}
The matrices $M',M''$ are positive semidefinite. Moreover, the matrix $R(1-M'_{\zero,\zero},M'')$ is positive semidefinite.
\end{proposition}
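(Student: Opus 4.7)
The plan is to express each of the relevant matrices as a nonnegative combination of rank-one positive semidefinite matrices, and then verify that the blocks of $R(1-M'_{\zero,\zero},M'')$ line up correctly with one such combination.

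First I would note that for any $C'\subseteq\E$, if $\chi_{C'}\in\{0,1\}^{\E}$ denotes the characteristic vector of $C'$, then $M_{C'}=\chi_{C'}\chi_{C'}\T$ is rank-one and positive semidefinite (here I am reading the definition of $M_C$ as $(M_C)_{\uu,\vv}=1$ iff $\uu,\vv\in C$; the statement ``$\uu,\vv\in\E$'' in the paper is a transparent typo). Because $M'$ and $M''$ are defined as averages, with nonnegative weights, of such matrices $M_{\sigma C}$ over $\sigma\in\Aut(q,n)$, they are themselves positive semidefinite.

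Next I would handle the $R$-matrix. The key observation is that for any $C'\subseteq\E$,
\[
R(1,M_{C'})=\begin{pmatrix}1 & \chi_{C'}\T\\ \chi_{C'} & \chi_{C'}\chi_{C'}\T\end{pmatrix}=\begin{pmatrix}1\\ \chi_{C'}\end{pmatrix}\begin{pmatrix}1 & \chi_{C'}\T\end{pmatrix},
\]
which is rank-one and positive semidefinite. I would then consider the nonnegative combination
\[
N:=|\Aut(q,n)|^{-1}\sum_{\substack{\sigma\in\Aut(q,n)\\ \zero\notin\sigma C}}R(1,M_{\sigma C}),
\]
which is therefore positive semidefinite, and check block by block that $N=R(1-M'_{\zero,\zero},M'')$.

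The bookkeeping to verify is straightforward: the top-left scalar is $|\Aut(q,n)|^{-1}\cdot|\{\sigma:\zero\notin\sigma C\}|=1-M'_{\zero,\zero}$, using that $(M_{\sigma C})_{\zero,\zero}=1$ iff $\zero\in\sigma C$, so $M'_{\zero,\zero}$ is exactly the $\sigma$-density of $\{\sigma:\zero\in\sigma C\}$. The bottom-right block is by definition $M''$. For the off-diagonal blocks, since the diagonal operator commutes with taking linear combinations, the vector $\diag M''$ appears in both places, giving exactly the shape of $R(1-M'_{\zero,\zero},M'')$. The only mild subtlety is the typo in the definition of $M_C$; once that is resolved, there is no genuine obstacle, as everything reduces to the trivial fact that averages of PSD matrices are PSD.
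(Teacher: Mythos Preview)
Your proposal is correct and essentially identical to the paper's own proof: both argue that $M',M''$ are nonnegative combinations of the rank-one matrices $M_{\sigma C}=\one_{\sigma C}\one_{\sigma C}\T$, and both obtain $R(1-M'_{\zero,\zero},M'')$ as the average of $R(M_{\sigma C})$ over those $\sigma$ with $\zero\notin\sigma C$. Your explicit block-by-block check and your remark on the typo in the definition of $M_C$ are welcome additions, but the underlying argument is the same.
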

\begin{proof}
The matrices $M'$ and $M''$ are nonnegative linear combinations of the positive semidefinite matrices $M_{\sigma C}=\one_{\sigma C}(\one_{\sigma C})\T$, and therefore themselves positive semidefinite. Since the matrices $R(M_{\sigma C})$ are positive semidefinite, so is 
\[
R(1-M'_{\zero,\zero},M'')=\begin{pmatrix}1-M'_{\zero,\zero}&\left(\diag M''\right)\T\\\diag M''&M''\end{pmatrix}=|\Aut(q,n)|^{-1} \sum_{\substack{\sigma \in \Aut(q,n)\\ \zero \not\in \sigma C }} R(M_{\sigma C}). \qedhere
\]
\end{proof}

The cardinality of $C$ can be expressed in terms of $M'$ in several ways as follows.
\begin{proposition}[Objective function]
We have 
\[|C|=q^nM'_{\zero,\zero},\quad |C|^2=q^n\sum_{\uu\in\E}M'_{\uu,\uu},\quad |C|^3=q^n\sum_{\uu,\vv\in \E}M'_{\uu,\vv}.
\]
\end{proposition}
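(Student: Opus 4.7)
The plan is to prove all three identities by double counting, using the formula $M'_{\uu,\vv}=|\Aut(q,n)|^{-1}\cdot|\{\sigma\in\Aut(q,n)\mid\{\zero,\uu,\vv\}\subseteq\sigma C\}|$ already observed in the proof of Proposition~\ref{prop:first}(iv), together with the transitivity of $\Aut(q,n)$ on $\E$. Since $\Aut(q,n)$ acts transitively on $\E$, the orbit-stabilizer theorem yields $|\Aut(q,n)|=q^n\cdot|\AutZ(q,n)|$, and for each fixed $\uu\in\E$ there are exactly $|\AutZ(q,n)|$ permutations $\sigma$ sending any fixed $c\in\E$ to $\uu$. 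Consequently, for any $c\in C$,
\[
|\{\sigma\in\Aut(q,n)\mid \sigma(c)=\zero\}|=|\AutZ(q,n)|=|\Aut(q,n)|/q^n,
\]
so summing over $c\in C$ gives $|\{\sigma\in\Aut(q,n)\mid \zero\in \sigma C\}|=|C|\cdot|\Aut(q,n)|/q^n$.

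For the first identity I would specialize $\uu=\vv=\zero$ in the formula for $M'$: the condition $\{\zero\}\subseteq \sigma C$ just says $\zero\in\sigma C$, and the count above immediately yields $M'_{\zero,\zero}=|C|/q^n$. For the second identity, I would swap the order of summation,
\[
\sum_{\uu\in\E}M'_{\uu,\uu}=|\Aut(q,n)|^{-1}\sum_{\sigma\in\Aut(q,n)}\left|\{\uu\in\E\mid \zero,\uu\in\sigma C\}\right|=|\Aut(q,n)|^{-1}\sum_{\sigma:\,\zero\in\sigma C}|\sigma C|,
\]
and since $|\sigma C|=|C|$ this equals $|C|\cdot|C|/q^n=|C|^2/q^n$. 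The third identity follows analogously from
\[
\sum_{\uu,\vv\in\E}M'_{\uu,\vv}=|\Aut(q,n)|^{-1}\sum_{\sigma:\,\zero\in\sigma C}|\sigma C|^2 = |C|^2\cdot|C|/q^n=|C|^3/q^n.
\]

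There is essentially no technical obstacle; the only mild subtlety is to handle the ``collapsed'' cases (e.g.\ $\uu=\zero$ in the second identity, or $\uu=\vv$ or one of them equal to $\zero$ in the third) cleanly. This is automatic once one interprets $\{\zero,\uu,\vv\}\subseteq\sigma C$ as the three containments $\zero\in\sigma C$, $\uu\in\sigma C$, $\vv\in\sigma C$, which is how the defining formula for $M'_{\uu,\vv}$ is naturally phrased — repeated elements then contribute consistently in the swap of summation.
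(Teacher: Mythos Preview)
Your proof is correct and follows essentially the same approach as the paper: both arguments swap the order of summation in the defining average for $M'$, use $|\sigma C|=|C|$, and reduce everything to the count $|\{\sigma\in\Aut(q,n)\mid \zero\in\sigma C\}|=|C|\cdot|\Aut(q,n)|/q^n$ coming from transitivity. The only cosmetic difference is that the paper phrases the second and third identities via $\langle M',I\rangle$ and $\langle M',J\rangle$ and decomposes the sum over $\sigma$ according to which $\uu\in C$ is sent to $\zero$, whereas you reuse the first identity directly; the content is the same.
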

\begin{proof}
For any~$\uu \in \E$ we have $|\{\sigma \in \Aut(q,n) \, | \, \sigma \uu = \zero\}| = |\AutZ(q,n)|$, so
\[
M'_{\zero,\zero} = \frac{|\{\sigma \in \Aut(q,n) \, | \,  \zero \in \sigma C\}|}{|\Aut(q,n)|} =|C|\frac{|\AutZ(q,n)|}{|\Aut(q,n)|} =q^{-n}|C|.
\]
Moreover, 
\begin{align*}
\sum_{\uu\in\E}M'_{\uu,\uu} &= \inner{M'}{I} = |\Aut(q,n)|^{-1} \sum_{\uu \in C} \sum_{\substack{\sigma \in \Aut(q,n)\\ \sigma \uu = \zero}} \inner{M_{\sigma C}}{I} =|\Aut(q,n)|^{-1} \sum_{\uu \in C} \sum_{\substack{\sigma \in \Aut(q,n)\\ \sigma \uu = \zero}} |C| 
\\&=\frac{|\AutZ(q,n)|}{|\Aut(q,n)|}|C|^2 =q^{-n}|C|^2,
\end{align*}
and similarly
\begin{align*}
 \sum_{\uu, \vv \in\E}M'_{\uu,\vv} &= \inner{M'}{J}  = |\Aut(q,n)|^{-1} \sum_{\uu \in C} \sum_{\substack{\sigma \in \Aut(q,n)\\ \sigma \uu = \zero}} \inner{M_{\sigma C}}{J} =|\Aut(q,n)|^{-1} \sum_{\uu \in C} \sum_{\substack{\sigma \in \Aut(q,n)\\ \sigma \uu = \zero}} |C|^2 \\&=q^{-n}|C|^3.
 \qedhere 
\end{align*}
\end{proof}

Now suppose that $C$ has covering radius $r$ and that the set of inequalities $(\lambda_0,\ldots, \lambda_n)\beta$ is valid for all codes of covering radius $r$.  Define
\begin{align} \label{eq:Nmatrix}
N=|\Aut(q,n)|^{-1} \sum_{\sigma \in \Aut(q,n)} M_{\sigma C}\cdot\left(\sum_{\ell=0}^n\lambda_\ell |\sigma C\cap S_{\ell}(\zero)|-\beta\right).
\end{align}

\begin{proposition}[Lasserre constraint and matrix cut inequalities]\label{prop:third}
The following hold:
\begin{itemize}
\item[\textup{(i)}] The matrix $R(c,N)$ is positive semidefinite, where $c=\sum_{\ell=0}^n \lambda_{\ell}\cdot|S_{\ell}(\zero)|\cdot M'_{\zero,\zero}
 - \beta$,
\item[\textup{(ii)}] $N$ is invariant under $\AutZ(q,n)$,
\item[\textup{(iii)}] For all $\uu,\vv\in \E$ we have 
\[
N_{\uu,\vv}=-\beta M_{\uu,\vv}+\sum_{\ell=0}^n \lambda_\ell\sum_{\ww\in S_{\ell}(\zero)} M'_{\uu-\ww,\vv-\ww},
\]
\item[\textup{(iv)}] For all $\uu,\vv \in \E$ we have
\begin{align*}
\beta M'_{\zero,\uu}
&\leq \sum_{\ell=0}^n \lambda_\ell \sum_{\ww\in S_{\ell}(\vv)} M'_{\uu,\ww},\\
\beta \bigl(M'_{\zero,\zero}-M'_{\zero,\uu}\bigr)
&\leq \sum_{\ell=0}^n \lambda_\ell \sum_{\ww\in S_{\ell}(\vv)} \bigl(M'_{\ww,\ww}-M'_{\uu,\ww}\bigr),\\
\beta M''_{\uu,\uu}
&\leq \sum_{\ell=0}^n \lambda_\ell \sum_{\ww\in S_{\ell}(\vv)} M''_{\uu,\ww},\\
\beta \bigl((1-M'_{\zero,\zero})-M''_{\uu,\uu}\bigr)
&\leq \sum_{\ell=0}^n \lambda_\ell \sum_{\ww\in S_{\ell}(\vv)} \bigl(M''_{\ww,\ww}-M''_{\uu,\ww}\bigr).
\end{align*}
\end{itemize}
\end{proposition}

\begin{remark}
The conditions in Proposition~\ref{prop:third} derive from general relaxations for 0-1 linear programs. In our setting, the incidence vector $\xx\in \{0,1\}^\E$ of $C$ satisfies the covering inequalities 
\[
\sum_{i=0}^n \lambda_i \sum_{\vv\in S_i(\uu)}\xx_\vv\geq \beta\quad \text{for all $\uu\in \E$.}
\]
However, the same method would apply in the setting of error correcting codes, where one might use inequalities $\xx_u+\xx_v\leq 1$, if $d(\uu,\vv)\in \{1,\ldots, d-1\}$. 

The matrix $N$ and conditions (i)--(iii) come from the Lasserre Hierarchy for $0,1$ polytopes. Let~$\yy$ be the extended incidence vector of $C$, where for every subset $S\subseteq \E$ we set $\yy(S)=1$ if $S\subseteq C$ and $\yy(S)=0$ otherwise. The validity of the covering inequalities implies that for every $\uu\in\E$, the matrix $M\in \C^{\mathcal{P}(\E)\times \mathcal{P}(\E)}$ given by $M_{I,J}=\sum_{i=0}^n \lambda_i\sum_{\vv\in S_i(\uu)} y_{I\cup J\cup\{\vv\}}-\beta y_{I\cup J}$ is positive semidefinite. Restricting to sets $I,J$ of size at most $1$ and averaging over $\Aut(q,n)$ leads to the matrix $N$ and conditions (i)--(iii).

The four sets of inequalities in part (iv) come from matrix cut inequalities~\cite{LS}. Let $\yy\in \C^{\emptyset\cup \E}$ be the extended incidence vector of $C$ where $\yy_\emptyset=1$, and $\yy_\uu=1$ if $\uu\in C$ and $\yy_\uu=0$ otherwise for $\uu\in\E$. The vector $\xx=\yy$ satisfies the homogenized cover inequalities
\begin{equation}\label{eq:polytope}
\sum_{i=0}^n \lambda_i\sum_{\vv\in S_i(\uu)} \xx_\vv\geq \beta\xx_{\emptyset} \quad\text{for all $\uu\in\E$}.
\end{equation}
The positive semidefinite matrix $\yy\yy\T=R(M_C)$ has the property that every column satisfies~\eqref{eq:polytope}, and so does the diagonal minus any column. Using the fact that for any $\sigma\in \Aut(q,n)$ also $\sigma C$ has covering radius $r$, and the fact that the sets of vectors $\xx$ satisfying~\eqref{eq:polytope} is closed under nonnegative linear combinations,  we thus obtain two sets of inequalities for (the bordered version of) the matrix $M'$, and similarly for $M''$. This yields the four conditions in (iv).  
\end{remark}

\begin{proof}
To see (i), observe that for every $\sigma$ the number $\sum_{\ell=0}^n\lambda_\ell|\sigma C\cap  S_{\ell}(\zero)|-\beta$ is nonnegative. Since the matrices $R(M_{\sigma C})$ are positive semidefinite, it follows that 
\[
R(c,N)=|\Aut(q,n)|^{-1} \sum_{\sigma \in \Aut(q,n)} R(M_{\sigma C})\cdot\left(\sum_{\ell=0}^n\lambda_\ell |\sigma C\cap  S_{\ell}(\zero)|-\beta\right)
\]
is positive semidefinite.

Item (ii) follows from the fact that for all $\tau\in \AutZ$ and $\ell$ we have
\begin{align*}
\tau\left( \sum_{\sigma\in \Aut(q,n)} M_{\sigma C}\cdot |\sigma C\cap  S_{\ell}(\zero)|\right)
&=\sum_{\sigma\in \Aut(q,n)} M_{\tau\sigma C}\cdot |\sigma C\cap  S_{\ell}(\zero)|\\
&=\sum_{\sigma\in \Aut(q,n)} M_{\tau\sigma C}\cdot |\tau \sigma C\cap \tau  S_{\ell}(\zero)|\\
&=\sum_{\sigma\in \Aut(q,n)} M_{\tau\sigma C}\cdot |\tau \sigma C\cap  S_{\ell}(\zero)|\\
&=\sum_{\rho\in \Aut(q,n)} M_{\rho C}\cdot |\rho C\cap  S_{\ell}(\zero)|,
\end{align*}
where we used that $ S_{\ell}(\zero)$ is fixed under $\AutZ(q,n)$ in the third equation, and we used the substitution $\rho=\tau\sigma$ in the last equality.

To see (iii), let $\uu,\vv\in \E$. We have 
\begin{align*}
\sum_{\sigma \in \Aut(q,n)} (M_{\sigma C})_{\uu,\vv}\cdot |\sigma C \cap  S_{\ell}(\zero)| 
&=\sum_{\ww \in  S_{\ell}(\zero)} |\{\sigma \in \Aut(q,n) \mid  \uu, \vv, \ww \in \sigma C\}| \\
&=\sum_{\ww \in  S_{\ell}(\zero)} |\{\sigma \in \Aut(q,n) \mid \uu-\ww, \vv-\ww, \zero \in \sigma C\}| \\
&= |\Aut(q,n)|\cdot\sum_{\ww \in S_{\ell}(\zero)} M'_{\uu-\ww,\vv-\ww}.
\end{align*}
Summing over $\ell$ and using that 
\[
|\Aut(q,n)|^{-1}\sum_{\sigma\in \Aut(q,n)} (M_{\sigma C})_{\uu,\vv}=M_{\uu,\vv}
\]
shows (iii).

To prove (iv), fix $\uu,\vv\in \E$ and let $\sigma\in \Aut(q,n)$. Since $\sigma C$ has covering radius $r$, we have
\begin{align} \label{eq:sigmaCcovering}
\sum_{\ell=0}^n \lambda_\ell |(\sigma C)\cap S_\ell(\vv)|\geq \beta.
\end{align}
Partition $\Aut(q,n)$ into subsets $\Gamma_1,\ldots ,\Gamma_4$ given by
\begin{align*}
&\Gamma_1=\{\sigma\in \Aut(q,n) \mid \zero\in \sigma C,\ \uu\in \sigma C\}, &&\Gamma_2=\{\sigma\in \Aut(q,n) \mid \zero\in \sigma C,\ \uu\not\in \sigma C\}\\
&\Gamma_3=\{\sigma\in \Aut(q,n) \mid \zero\not\in \sigma C,\ \uu\in \sigma C\}, &&\Gamma_4=\{\sigma\in \Aut(q,n) \mid \zero\not\in \sigma C,\ \uu\not\in \sigma C\}.
\end{align*}
Summing~\eqref{eq:sigmaCcovering} over all $\sigma\in \Gamma_1$ and dividing by $|\Aut(q,n)|$ yields the first inequality since
\[
|\Aut(q,n)|^{-1}\sum_{\sigma\in \Gamma_1} \beta=M'_{\zero,\uu} \beta
\]
and for every $\ell\in \{0,\ldots, n\}$ we have  
\begin{align*}
|\Aut(q,n)|^{-1}\sum_{\sigma\in \Gamma_1} |\sigma C\cap S_\ell(\vv)|&=|\Aut(q,n)|^{-1}\sum_{\ww\in S_\ell(\vv)}|\{\sigma\in \Aut(q,n) \mid \zero,\uu,\ww\in \sigma C\}|\\&=\sum_{\ww\in S_\ell(\vv)} M'_{\uu,\ww}.
\end{align*}
Similarly, summing~\eqref{eq:sigmaCcovering} over all $\sigma\in \Gamma_2$, $\sigma\in \Gamma_3$ and $\sigma\in \Gamma_4$ and dividing by $|\Aut(q,n)|$ yields the second, third and fourth inequality, respectively. Here we use that for any $\mathbf{a}, \mathbf{b}\in \E$ we have 
\begin{align*}
|\Aut(q,n)|^{-1}\cdot |\{\sigma\in \Aut(q,n)\mid \zero\in \sigma C,\ \mathbf{a},\mathbf{b}\in \sigma C\}|&=M'_{\mathbf{a},\mathbf{b}}\\
|\Aut(q,n)|^{-1}\cdot |\{\sigma\in \Aut(q,n)\mid \zero\not\in \sigma C,\ \mathbf{a},\mathbf{b}\in \sigma C\}|&=M''_{\mathbf{a},\mathbf{b}}
\qedhere
\end{align*}
\end{proof}

This leads to the following SDP lower bound on covering codes, which is always at least as tight as the LP bound mentioned in the introduction.\footnote{The LP bound requires nonnegativity of certain expressions involving Krawtchouk polynomials, which is equivalent to positive semidefiniteness of $M=M' +M''$. The LP bound further requires that a small subset of the matrix cut inequalities are satisfied. Thus the LP constraints are implied by the SDP constraints of Theorem \ref{thm:originalbound}.}
\begin{theorem}[Covering lower bound]\label{thm:originalbound}
Suppose that every $C\subseteq \E$ with covering radius $r$ satisfies $(\lambda_0,\ldots, \lambda_n)\beta$. Then
\[
K_q(n,r)^3\geq \min_{M,M',M'',N} q^n\sum_{\uu,\vv\in \E}M'_{\uu,\vv}
\]
where the minimum ranges over real matrices $M, M',M'',N$ that satisfy the conditions in Proposition~\ref{prop:first}, Proposition~\ref{prop:second} and Proposition~\ref{prop:third}.
\end{theorem}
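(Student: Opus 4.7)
The plan is straightforward: take an optimal code $C\subseteq\E$ of covering radius $r$ with $|C|=K_q(n,r)$, produce from it the matrices $M',M'',N$ defined just before Proposition~\ref{prop:first} and in~\eqref{eq:Nmatrix}, and observe that this triple is a feasible point of the SDP whose objective value equals $|C|^3$. Then the minimum in the statement can only be smaller, giving $K_q(n,r)^3=|C|^3\geq \min$.

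Feasibility is essentially assembled from the three preceding propositions. Proposition~\ref{prop:first} furnishes the basic entrywise bounds and the orbit symmetry relations. Proposition~\ref{prop:second} gives $M',M''\succeq 0$ together with $R(1-M'_{\zero,\zero},M'')\succeq 0$. Proposition~\ref{prop:third} supplies the Lasserre constraint $R(c,N)\succeq 0$, the $\AutZ(q,n)$-invariance of $N$, the linear relation (iii) that expresses $N$ in terms of $M'$, and the matrix cut inequalities (iv). The covering hypothesis feeds in exactly at Proposition~\ref{prop:third}: since $\Aut(q,n)$ acts by isometries, every translate $\sigma C$ still has covering radius $r$ and therefore satisfies $(\lambda_0,\ldots,\lambda_n)\beta$, which makes the scalar weight $\sum_{\ell}\lambda_\ell|\sigma C\cap S(\zero,\ell)|-\beta$ in~\eqref{eq:Nmatrix} nonnegative for every $\sigma$ and hence lets $R(c,N)$ be written as a conic combination of the PSD matrices $R(M_{\sigma C})$.

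With feasibility in hand, the Objective Function proposition gives
\[
q^n\sum_{\uu,\vv\in\E} M'_{\uu,\vv}=|C|^3=K_q(n,r)^3,
\]
so the minimum on the right-hand side is bounded above by $K_q(n,r)^3$, which is the claim. I do not expect any substantive obstacle: the preceding propositions have been engineered precisely so that every constraint of the SDP is automatically satisfied by the matrices coming from a covering code. The only bookkeeping point is that the scalar $c=\sum_{\ell}\lambda_\ell|S(\zero,\ell)|\,M'_{\zero,\zero}-\beta$ in Proposition~\ref{prop:third}(i), and every other matrix entry appearing in the constraints, must be computed from the same $M'$ that appears in the objective, so that the triple $(M',M'',N)$ genuinely arises from a single code $C$.
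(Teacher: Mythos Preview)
Your proposal is correct and matches the paper's approach: the paper does not even spell out a proof of Theorem~\ref{thm:originalbound}, treating it as an immediate consequence of the three propositions plus the Objective Function proposition, exactly as you outline. Your feasibility-plus-objective argument is precisely the intended one.
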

\begin{remark}
The conditions in Proposition~\ref{prop:first}(i) and Proposition~\ref{prop:third} express $M$, $M''$ and $N$ in terms of $M'$, so the minimization can be expressed in terms of the matrix variable $M'$ alone.
\end{remark}

\begin{remark}[Comparison of objective values]
We have
\begin{align}\label{allobjectives}
\sqrt[3]{q^n \sum_{\uu,\vv \in \E} M'_{\uu,\vv} } \geq  \sqrt{ q^n\sum_{\uu \in \E} M'_{\uu,\uu}} \geq q^n M'_{\zero,\zero}.
\end{align}
It follows that~$\sqrt[3]{q^n \sum_{\uu,\vv \in \E} M'_{\uu,\vv} }$ gives the tightest lower bound of the three expressions. 

To see~\eqref{allobjectives}, note the following general fact: for~$c \in \R$ and~$M \in \R^{n \times n}$ with~$m:=\text{diag}(M)$  we have
\begin{align}\label{IJPSD}
\begin{pmatrix} c & m\T \\m & M \end{pmatrix} \succeq 0 \quad \Longrightarrow \quad cM-mm\T \succeq 0
\quad \Longrightarrow \quad
c  \inner{M}{J} \geq \inner{M}{I}^2.
\end{align}
By applying~\eqref{IJPSD} to the matrix~$M$ from the Bose-Mesner algebra (and~$c=1$), we obtain
\begin{align}\label{BMobjectives}
q^n \sum_{\uu \in \E} M'_{\uu,\uu} \geq  (q^n M'_{\zero,\zero})^2.
\end{align}
Since $M'\succeq 0$ and $M'_{\uu,\uu}=M'_{\zero,\uu}$ for all $\uu\in\E$, we have $R(M'_{\zero,\zero},M')\succeq 0$. So by applying~$\eqref{IJPSD}$ we obtain
\begin{align*}
M'_{\zero,\zero}\inner{M'}{J}&\geq \inner{M'}{I}^2, \quad\quad \text{so}
\\(M'_{\zero,\zero})^2 \inner{M'}{J}^2 &\geq \inner{M'}{I}^4. 
\end{align*}
By combining this inequality with~$\eqref{BMobjectives}$ we find
\[
\sum_{\uu \in \E} M'_{\uu,\uu} \inner{M'}{J}^2  \geq q^n (M'_{\zero,\zero})^2 \inner{M'}{J}^2 \geq q^n \inner{M'}{I}^4, 
\]
so 
\[
\inner{M'}{J}^2  \geq q^{n} \inner{M'}{I}^3, \quad \text{implying} \quad 
\big( \sum_{\uu,\vv \in \E} M'_{\uu,\vv}\big)^2 \geq q^n \big(\sum_{\uu \in \E} M'_{\uu,\uu} \big)^3.
\]
Multiplying both sides by~$q^{2n}$
and combining with~\eqref{BMobjectives}  this yields~\eqref{allobjectives}, as desired.
\end{remark}

In the next sections, we make the SDP from Theorem~\ref{thm:originalbound} effective by using symmetry reduction.

\section{Block-diagonalization of the Terwilliger algebra\label{sec:terwilliger}}

In this section, we describe the block-diagonalization of the Terwilliger algebras of the binary~\cite{Sch05} and nonbinary~\cite{GST06} Hamming schemes. This forms the algebraic foundation for the symmetry reduction of the SDP in Theorem~\ref{thm:originalbound}.

\subsection{Terwilliger algebra of the binary Hamming cube \label{sec:terwilligerbinary}}
For $\uu,\vv\in \{0,1\}^n$ we define $\overline{d}(\uu,\vv):=(i,j,t)$, where 
\begin{align*}
i&=|\{\ell\mid \uu_\ell\neq 0\}|,\\
j&=|\{\ell\mid \vv_\ell\neq 0\}|,\\
t&=|\{\ell\mid \uu_\ell\neq 0, \vv_\ell\neq 0\}|.
\end{align*}
The set of tuples $(i,j,t)$ that occur this way are given by \[
I(2,n)=\{(i,j,t)\mid 0\leq t\leq i,j,\ i+j\leq n+t\}
\]
and they parametrize the orbits of pairs $(\uu,\vv)$ under the group $\AutZ(2,n)=\{\sigma\in \Aut(2,n)\mid \sigma(\zero)=\zero\}$ of symmetries of the Hamming space that stabilize the zero word.

For $(i,j,t)\in I(2,n)$ we define the $\{0,1\}^n\times\{0,1\}^n$ matrix $M_{i,j}^{t}$ by 
\[
\left(M_{i,j}^{t}\right)_{\uu,\vv}=
\begin{cases}
1& \text{if }\overline{d}(\uu,\vv)=(i,j,t)\\
0& \text{otherwise.}
\end{cases}
\]
The matrices $M_{i,j}^{t}$ constitute a basis for the space $\mathcal{A}_{2,n}$ of complex $\{0,1\}^n\times \{0,1\}^n$ matrices that are invariant under the action of $\AutZ(2,n)$: 
\[
\mathcal{A}_{2,n}=\left\{\sum_{(i,j,t)\in I(2,n)}x_{i,j}^{t}M_{i,j}^{t}\mid x_{i,j}^{t}\in \C\right\}.
\]
Note that~$(M_{i,j}^t)^{\sf T} = M_{j,i}^t$. Moreover, if~$|\supp(\uu)|=i$, $|\supp(\vv)|=j$, then~$|\supp(\uu) \cap \supp(\vv)|=t$ is equivalent to~$d_H(\uu,\vv)=i+j-2t$. The set~$\mathcal{A}_{2,n}$ is a~$C^*$-algebra, i.e., it is closed under taking the adjoint, addition, scalar multiplication, and matrix multiplication. This algebra is the \emph{Terwilliger algebra of the binary Hamming scheme}. We will now describe the block diagonalization of this algebra, which was derived by Schrijver~\cite{Sch05}, and which allows a significant reduction in the size of matrices of~$\mathcal{A}_{2,n}$ by mapping it to an isomorphic smaller algebra. To this end, define, for $i,j,k,t \in \{0,\ldots, n \}$, the numbers
\begin{align}
\beta_{i,j,k}^{t}=\sum_{u=0}^n (-1)^{t-u} \tbinom{u}{t}\tbinom{n-2k}{u-k}\tbinom{n-k-u}{i-u}\tbinom{n-k-u}{j-u}.
\end{align}
Throughout the paper, we use the convention that $\tbinom{s}{t}=0$ if $s<t$ or $t<0$.
\begin{theorem}{\cite{Sch05}} \label{thm:blockhammingcube}
The following map is a~$*$-isomorphism of algebras: 
\begin{align}
\phi: \mathcal{A}_{2,n} &\to\bigoplus_{k=0}^{\lfloor n/2 \rfloor} \mathbb{C}^{n-2k+1 \times n-2k+1}  \notag \\
\sum_{(i,j,t)\in I(2,n)} x_{i,j}^t M_{i,j}^t &\mapsto \bigoplus_{k=0}^{\lfloor n/2 \rfloor} \left( \sum_{t} \tbinom{n-2k}{i-k}^{-\tfrac{1}{2}} \tbinom{n-2k}{j-k}^{-\tfrac{1}{2}} \beta_{i,j,k}^{t} x_{i,j}^t \right)_{i,j=k}^{n-k}. \label{eq:stariso}
\end{align}
\end{theorem}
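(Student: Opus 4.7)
My plan is to identify $\mathcal{A}_{2,n}$ as the commutant of the natural $\mathcal{S}_n$-action on $V:=\C^{\{0,1\}^n}$ and then extract the block structure from representation theory. In the binary case $\AutZ(2,n)=\mathcal{S}_n$ (the only nontrivial symbol permutation moves $\zero$), and the orbits of the diagonal $\mathcal{S}_n$-action on pairs $(\uu,\vv)$ are precisely parameterized by $(i,j,t)\in I(2,n)$, so the matrices $M_{i,j}^t$ form a basis of the centralizer $\mathrm{End}_{\mathcal{S}_n}(V)=\mathcal{A}_{2,n}$. By the double centralizer theorem this commutant is $*$-isomorphic to a direct sum of full matrix algebras whose block sizes equal the multiplicities of the irreducible $\mathcal{S}_n$-constituents of $V$, so the task reduces to determining those multiplicities and pinning down matrix coefficients in a concrete basis.

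To find the multiplicities, decompose $V=\bigoplus_{i=0}^n V_i$ by Hamming weight, so that $V_i$ is the permutation module on $i$-subsets of $[n]$. Young's rule gives the classical orthogonal decomposition $V_i=\bigoplus_{k=0}^{\min(i,n-i)}H_i^{(k)}$ into irreducible Specht modules of shape $(n-k,k)$, each appearing with multiplicity one. Summing over $i$, the $k$-th irreducible appears exactly in $V_k,V_{k+1},\ldots,V_{n-k}$, with total multiplicity $m_k=n-2k+1$. This yields the abstract isomorphism $\mathcal{A}_{2,n}\cong \bigoplus_{k=0}^{\lfloor n/2\rfloor}\C^{m_k\times m_k}$ and matches the right-hand side of~\eqref{eq:stariso} at the level of block dimensions.

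To produce $\phi$ explicitly I would pick, for each $k$, a canonical chain of unit zonal vectors $\psi_i^{(k)}\in H_i^{(k)}$ for $k\leq i\leq n-k$, obtained by starting from a highest-weight vector $\psi_k^{(k)}$ and iterating the Johnson-scheme raising operator (which adds one element to a subset), followed by orthonormalization against the lower harmonics. In this basis the $k$-th block of $\phi(A)$ has $(i,j)$-entry $\langle\psi_i^{(k)},A\,\psi_j^{(k)}\rangle$ up to the $\binom{n-2k}{i-k}^{-1/2}\binom{n-2k}{j-k}^{-1/2}$ prefactors, which record the norms of the characteristic vectors of $i$-subsets relative to their harmonic projections. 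Substituting $A=M_{i,j}^t$ and expanding each zonal vector via Möbius inversion on the subset lattice reduces the computation to a weighted count of pairs of subsets of prescribed sizes and intersection, from which the alternating sum $\beta_{i,j,k}^t$ emerges after collecting terms indexed by $u$.

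\textbf{Main obstacle.} The most delicate step is verifying that $\phi$ preserves multiplication, which translates into a nontrivial hypergeometric identity among the $\beta^t_{i,j,k}$. Rather than grinding this out directly, I would identify the normalized $\beta^t_{i,j,k}$ with evaluations of Hahn (equivalently, dual Hahn) polynomials, which are the zonal spherical functions of the Johnson scheme for the Gelfand pair $(\mathcal{S}_n,\mathcal{S}_i\times\mathcal{S}_{n-i})$; the required orthogonality and product formulas are then standard. The $*$-compatibility is immediate from $\beta^t_{i,j,k}=\beta^t_{j,i,k}$ together with $(M_{i,j}^t)\T=M_{j,i}^t$, and bijectivity of $\phi$ follows from the dimension count $|I(2,n)|=\sum_{k=0}^{\lfloor n/2\rfloor}(n-2k+1)^2$.
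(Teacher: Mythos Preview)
The paper does not prove Theorem~\ref{thm:blockhammingcube}; it is stated with attribution to Schrijver~\cite{Sch05} and used as a black box, so there is no in-paper argument to compare your proposal against.

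Your outline is the standard representation-theoretic route to this result and is sound. The identification $\AutZ(2,n)\cong\mathcal{S}_n$, the weight decomposition $V=\bigoplus_i V_i$ with each $V_i$ the permutation module on $i$-subsets, and the two-row Specht multiplicity count $m_k=n-2k+1$ are all correct, and the dimension check $|I(2,n)|=\sum_k(n-2k+1)^2$ does certify that the constructed $*$-homomorphism is an isomorphism once it is known to be one. For comparison, Schrijver's original argument in~\cite{Sch05} is more hands-on: he writes down explicit (unnormalized) harmonic vectors and verifies the block structure by a direct computation, with the coefficients $\beta_{i,j,k}^t$ appearing via inclusion--exclusion on subsets rather than being identified \emph{a posteriori} as Hahn polynomial values. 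Your Gelfand-pair/Hahn-polynomial viewpoint is the cleaner conceptual explanation of why those particular coefficients show up, at the cost of importing more machinery; Schrijver's is self-contained but less illuminating. Either way, the step you flag as the main obstacle---checking multiplicativity, equivalently the product formula for the $\beta$'s---is genuinely where the work lies, and your plan to offload it to known orthogonality relations for Hahn polynomials is legitimate but would need the precise identification (with parameters) spelled out to count as a proof.
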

Theorem~\ref{thm:blockhammingcube} implies that, for coefficients~$x_{i,j}^t \in \mathbb{C}$, we have
\begin{align}\label{eq:Mprimesymmetryreduction}
\sum_{(i,j,t)\in I(2,n)} x_{i,j}^t M_{i,j}^t \succeq 0  \quad \Longleftrightarrow  \quad \left( \sum_{t} \beta_{i,j,k}^{t} x_{i,j}^t \right)_{i,j=k}^{n-k} \succeq 0 \text{ for all $k=0,\ldots, {\lfloor n/2 \rfloor}$.} 
\end{align}
This crucial fact will be used widely in the SDP-relaxations studied in this paper. Also note that $\phi$ maps real valued matrices to real valued matrices. To see the size of the reduction, note that the matrices in $\mathcal{A}_{2,n}$ have size $2^n \times 2^n$, while (cf.~\cite{Sch05})  \begin{align}| I(2,n)|  = \sum_{k=0}^{\lfloor n/2 \rfloor} (n-2k+1)^2= \binom{n+3}{3},\label{dimA2n}\end{align} which equals the dimension of $\mathcal{A}_{2,n}$.

In the semidefinite relaxations in this paper, we will require not only that elements from~$\mathcal{A}_{2,n}$ are positive semidefinite, but some of the conditions are slightly stronger and inspired by~\cite{Laurent}, namely that matrices of the form 
\[
R(M)=\begin{pmatrix} 1 & \diag(M)^*\\ \diag(M) & M \end{pmatrix} \quad \text{for $M=\sum_{(i,j,t)\in I(2,n)} x_{i,j}^t M_{i,j}^t  \in \mathcal{A}_{2,n}$}, 
\] 
are positive semidefinite. Note that the diagonal of $M$ is given by $\diag M=\sum_{i=0}^n x_{i,i}^i \one_{S_i(\zero)}$. To express positive semidefiniteness of $R(M)$ in terms of $\phi(M)$, observe that for $i,j\in \{0,\ldots, n\}$ the matrix 
\[
\one_{S_i(\zero)}(\one_{S_j(\zero)})\T=\sum_{t\mid (i,j,t)\in I(2,n)} M_{i,j}^t
\]
belongs to $\mathcal{A}_{2,n}$. We have the following lemma. 
\begin{lemma}\label{lem:imageofJ}
Let $A_0\oplus\cdots\oplus A_{\lfloor n/2\rfloor}$ be the image of $\one_{S_i(\zero)}(\one_{S_j(\zero)})\T$ under the map $\phi$. Then 
\[
A_{0}=\tbinom{n}{i}^{\tfrac{1}{2}}\tbinom{n}{j}^{\tfrac{1}{2}}\cdot \big(\delta_{i,i'}\delta_{j,j'}\big)_{i',j'=0}^n,
\]
and $A_{k}$ is the zero matrix for $k>0$.
\end{lemma}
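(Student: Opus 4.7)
My plan is to apply the isomorphism $\phi$ directly from its formula in Theorem~\ref{thm:blockhammingcube}. The first step is to expand $\one_{S_i(\zero)}(\one_{S_j(\zero)})\T$ in the basis $\{M_{i',j'}^{t}\}$ of $\mathcal{A}_{2,n}$. Since the $(\uu,\vv)$-entry of this rank-one matrix equals $1$ precisely when $|\supp(\uu)|=i$ and $|\supp(\vv)|=j$, and since $t=|\supp(\uu)\cap\supp(\vv)|$ then automatically satisfies $(i,j,t)\in I(2,n)$, we have
\[
\one_{S_i(\zero)}(\one_{S_j(\zero)})\T \;=\; \sum_{t} M_{i,j}^{t},
\]
with all other coefficients $x_{i',j'}^{t}=0$. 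This immediately forces $(A_k)_{i',j'}=0$ whenever $(i',j')\neq(i,j)$, and reduces the whole problem to computing
\[
(A_k)_{i,j} \;=\; \binom{n-2k}{i-k}^{-1/2}\binom{n-2k}{j-k}^{-1/2}\sum_{t}\beta_{i,j,k}^{t}.
\]

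The main step is then to evaluate $\sum_t \beta_{i,j,k}^{t}$. Substituting the definition of $\beta$ and interchanging the two summations isolates the $t$-dependence into the elementary inner sum $\sum_{t}(-1)^{t-u}\binom{u}{t}=(-1)^{-u}(1-1)^u=[u=0]$, so only the $u=0$ contribution survives. This leaves $\binom{n-2k}{-k}\binom{n-k}{i}\binom{n-k}{j}$, which equals $0$ for $k\geq 1$ (because $\binom{n-2k}{-k}=0$) and equals $\binom{n}{i}\binom{n}{j}$ for $k=0$. Plugging back and simplifying the prefactor yields $(A_0)_{i,j}=\binom{n}{i}^{1/2}\binom{n}{j}^{1/2}$ and $A_k=0$ for $k>0$, which is the claim.

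The main obstacle, such as it is, lies in justifying the range of summation over $t$. In the formula for $\phi$ the sum runs only over $t$ with $(i,j,t)\in I(2,n)$, i.e.\ $\max(0,i+j-n)\leq t\leq\min(i,j)$, whereas the manipulation above treats $t$ as unrestricted. Terms with $t>\min(i,j)$ pose no problem because $\binom{u}{t}=0$ for the relevant values $u\leq\min(i,j)$, but for $0\leq t<i+j-n$ (which only occurs when $i+j>n$) one must verify separately that $\beta_{i,j,k}^{t}=0$. I would check this directly from the defining formula by a short manipulation of the binomial factors; alternatively, one may appeal to the uniqueness of the expansion of $\one_{S_i(\zero)}(\one_{S_j(\zero)})\T$ in the basis $\{M_{i,j}^{t}\}$, which forces the extended sum to coincide with the restricted one. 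Once this small point is settled, the argument reduces to the one-line binomial identity above.
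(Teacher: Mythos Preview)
Your proof is correct and follows essentially the same route as the paper: expand $\one_{S_i(\zero)}(\one_{S_j(\zero)})\T=\sum_t M_{i,j}^t$, apply $\phi$, and evaluate $\sum_t\beta_{i,j,k}^t$ by interchanging the $t$ and $u$ sums so that only the $u=0$ term survives, yielding $\delta_{k,0}\binom{n}{i}\binom{n}{j}$. The paper simply sums $t$ from $0$ to $n$ without comment (relying on the convention $\binom{s}{t}=0$ for $t<0$ or $t>s$), so the range-of-$t$ issue you flag is handled implicitly there as well.
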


\begin{remark}\label{rem:basisvectors} In~\cite{Sch05}, the $*$-algebra isomorphism $\phi$ is obtained via a block diagonalisation $\phi(M)=\bigoplus_{k=0}^{\lfloor n/2\rfloor}U_k\T MU_k$ for explicit matrices $U_k$, where the columns of the $U_k$ together form an orthonormal system of vectors in $\R^{\{0,1\}^n}$. The columns of $U_0$ are $\binom{n}{i}^{-\tfrac{1}{2}}\one_{S_i(\zero)}$ ($i=0,\ldots, n$). From this, Lemma~\ref{lem:imageofJ} follows directly. Here, we give a proof that only uses the description of $\phi$ as given in Theorem~\ref{thm:blockhammingcube}. 
\end{remark}

\begin{proof}
We first show the following identity:
\begin{equation}\label{eq:betasum}
\sum_{t=0}^n\beta_{i,j,k}^t=
\begin{cases}\tbinom{n}{i}\tbinom{n}{j}&\text{if $k=0$,}\\
0&\text{otherwise.}
\end{cases}
\end{equation}
To see this, we calculate
\begin{align*}
\sum_{t=0}^n \beta_{i,j,k}^{t}&=\sum_{t=0}^n \sum_{u=0}^n (-1)^{t-u} \tbinom{u}{t}\tbinom{n-2k}{u-k}\tbinom{n-k-u}{i-u}\tbinom{n-k-u}{j-u}\\
&=\sum_{u=0}^n \tbinom{n-2k}{u-k}\tbinom{n-k-u}{i-u}\tbinom{n-k-u}{j-u}\cdot \sum_{t=0}^n (-1)^{t-u} \tbinom{u}{t}.
\end{align*}
Since $\sum_{t=0}^n (-1)^{t-u} \tbinom{u}{t}=0$ for $u>0$, we may restrict the summation to $u=0$ and find that 
\[
\sum_{t=0}^n \beta_{i,j,k}^{t}=\tbinom{n-2k}{-k}\tbinom{n-k}{i}\tbinom{n-k}{j}=\delta_{k,0}\tbinom{n}{i}\tbinom{n}{j}.
\]
This proves equation (\ref{eq:betasum}). 

To complete the proof of the lemma, recall that $\one_{S_i(\zero)}(\one_{S_j(\zero)})\T=\sum_{t\mid (i,j,t)\in I(2,n)} M_{i,j}^t$. 
So by Theorem~\ref{thm:blockhammingcube}
\[
A_k= (\sum_{t=0}^n \beta_{i,j,k}^{t})\cdot \tbinom{n-2k}{i-k}^{-\tfrac{1}{2}} \tbinom{n-2k}{j-k}^{-\tfrac{1}{2}} \big(\delta_{i,i'}\delta_{j,j'}\big)_{i',j'=k}^{n-k}.
\]
The result now follows directly from (\ref{eq:betasum}).
\end{proof}

\begin{proposition}\label{prop:RMpsdbinary}
Let $M=\sum_{(i,j,t)\in I(2,n)}x_{i,j}^{t}M_{i,j}^{t}\in \mathcal{A}_{2,n}$ and let 
$\phi(M)=A_0\oplus\cdots\oplus A_{\lfloor n/2\rfloor}$.
Then, for any $c>0$ we have 
\[
R(c,M)\succeq 0\quad\iff\quad A_{k}\succeq 0 \text{ for all $k>0$ and } \begin{pmatrix}c&y^*\\y&A_{0}\end{pmatrix}\succeq 0,
\]
where $y_i=x_{i,i}^{i} \tbinom{n}{i}^{\tfrac{1}{2}}$ for $i=0,\ldots, n$.
\end{proposition}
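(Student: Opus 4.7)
The plan is to reduce positive semidefiniteness of $R(c,M)$, through a Schur complement, to a condition on a single matrix inside $\mathcal{A}_{2,n}$, and then transport it through the $*$-isomorphism $\phi$ using Lemma~\ref{lem:imageofJ}. Since $c>0$, the Schur complement characterization gives
\[
R(c,M) \succeq 0 \quad\iff\quad M - c^{-1} \diag(M)\diag(M)^* \succeq 0.
\]

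The crucial observation is that $\diag(M)\diag(M)^*$ itself lies in $\mathcal{A}_{2,n}$. Indeed, writing $\diag(M) = \sum_{i=0}^n x_{i,i}^i \mathbf{1}_{S_i(\zero)}$ gives
\[
\diag(M)\diag(M)^* = \sum_{i,j=0}^n x_{i,i}^i (x_{j,j}^j)^* \, \mathbf{1}_{S_i(\zero)} \mathbf{1}_{S_j(\zero)}\T,
\]
which is a linear combination of elements of $\mathcal{A}_{2,n}$, as noted just before Lemma~\ref{lem:imageofJ}. Consequently $M - c^{-1}\diag(M)\diag(M)^*$ lies in $\mathcal{A}_{2,n}$, so by the equivalence~\eqref{eq:Mprimesymmetryreduction} (i.e.\ since $\phi$ is a $*$-isomorphism of $C^*$-algebras, it preserves the cone of PSD elements) its PSDness is equivalent to the PSDness of its $\phi$-image. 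By Lemma~\ref{lem:imageofJ}, the image of $\mathbf{1}_{S_i(\zero)} \mathbf{1}_{S_j(\zero)}\T$ vanishes on every block $k>0$ and in the $k=0$ block equals $\binom{n}{i}^{1/2} \binom{n}{j}^{1/2} \big(\delta_{i,i'}\delta_{j,j'}\big)_{i',j'=0}^n$. Summing with coefficients $x_{i,i}^i (x_{j,j}^j)^*$, we conclude that $\phi(\diag(M)\diag(M)^*)$ equals $yy^*$ in the $k=0$ block and is zero in every other block, where $y_i = x_{i,i}^i \binom{n}{i}^{1/2}$.

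Assembling the pieces,
\[
\phi\big(M - c^{-1}\diag(M)\diag(M)^*\big) = \big(A_0 - c^{-1} y y^*\big) \oplus A_1 \oplus \cdots \oplus A_{\lfloor n/2\rfloor},
\]
which is PSD iff each summand is, i.e.\ iff $A_k \succeq 0$ for all $k > 0$ and $A_0 - c^{-1} y y^* \succeq 0$. A second application of the Schur complement (again valid because $c > 0$) rewrites the last condition as $\begin{pmatrix} c & y^* \\ y & A_0 \end{pmatrix} \succeq 0$, finishing the proof. The only step requiring care is the explicit identification $\phi(\diag(M)\diag(M)^*) = yy^* \oplus 0 \oplus \cdots \oplus 0$ with the correct normalizing constants; this is an immediate consequence of Lemma~\ref{lem:imageofJ}, after which everything is routine Schur-complement manipulation.
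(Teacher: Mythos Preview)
Your proof is correct and follows essentially the same approach as the paper's: a Schur complement to pass from $R(c,M)$ to $M-c^{-1}\diag(M)\diag(M)^*\in\mathcal{A}_{2,n}$, application of $\phi$ together with Lemma~\ref{lem:imageofJ} to identify the image of $\diag(M)\diag(M)^*$ as $yy^*$ in the $k=0$ block, and a second Schur complement to obtain the bordered form. The only cosmetic difference is that the paper first normalizes to $c=1$ by replacing $M$ with $c^{-1}M$, whereas you carry $c$ through the computation; the arguments are otherwise identical.
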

A proof of this proposition was indicated in~\cite[Lemma 1]{Laurent} by using additional details of the proof of Theorem \ref{thm:blockhammingcube}. Here, we deduce it directly from this theorem.
\begin{proof}
Since $\phi$ is linear, we may assume that $c=1$ (by replacing $M$ with $\tfrac{1}{c}M$.) 
By taking Schur complements, we have 
\begin{align*}
\begin{pmatrix}1&(\diag M)^*\\\diag M&M\end{pmatrix}\succeq 0 &\iff M-\sum_{i,j=0}^n x_{i,i}^{i}(x_{j,j}^{j})^*\one_{S_i(\zero)}(\one_{S_j(\zero)})\T\succeq 0\\
&\iff \phi\left(M-\sum_{i,j=0}^n x_{i,i}^{i}(x_{j,j}^{j})^*\one_{S_i(\zero)}(\one_{S_j(\zero)})\T\right)\succeq 0\\
&\iff A_{k}\succeq 0 \text{ for $k>0$},\ A_{0}-\left( x_{i,i}^{i}(x_{j,j}^{j})^*\tbinom{n}{i}^{\tfrac{1}{2}}\tbinom{n}{j}^{\tfrac{1}{2}}\right)_{i,j=0}^n\succeq 0\\
&\iff A_k\succeq 0 \text{ for $k>0$},\ \begin{pmatrix}1&y^*\\y&A_{0}\end{pmatrix}\succeq 0,
\end{align*}
where we used Theorem~\ref{thm:blockhammingcube} in the second equivalence, Lemma~\ref{lem:imageofJ} in the third equivalence, and Schur's complement in the fourth equivalence.
\end{proof}

\subsection{Terwilliger algebra of the nonbinary Hamming scheme \label{sec:terwilligerqary}}
For $\uu,\vv\in [q]^n$ we define $\overline{d}(\uu,\vv):=(i,j,t,p)$ where 
\begin{align*}
i&=|\{\ell\mid \uu_\ell\neq 0\}|,\\
j&=|\{\ell\mid \vv_\ell\neq 0\}|,\\
t&=|\{\ell\mid \uu_\ell\neq 0, \vv_\ell\neq 0\}|,\\
p&=|\{\ell\mid \uu_\ell=\vv_\ell\neq 0\}|.
\end{align*}
The set of tuples $(i,j,t,p)$ that occur this way are given by \[
I(q,n)=\{(i,j,t,p)\mid 0\leq p\leq t\leq i,j,\ i+j\leq n+t\}
\]
and they parametrize the orbits of pairs $(\uu,\vv)$ under the group $\AutZ(q,n)=\{\sigma\in \Aut(q,n)\mid \sigma(\zero)=\zero\}$ of symmetries of the Hamming space that stabilize the zero word.

For $(i,j,t,p)\in I(q,n)$ we define the $[q]^n\times [q]^n $ matrix $M_{i,j}^{t,p}$ by 
\[
\left(M_{i,j}^{t,p}\right)_{\uu,\vv}=
\begin{cases}
1& \text{if }\overline{d}(\uu,\vv)=(i,j,t,p)\\
0& \text{otherwise.}
\end{cases}
\]
The matrices $M_{i,j}^{t,p}$ constitute a basis for the space $\mathcal{A}_{q,n}$ of complex $[q]^n\times [q]^n$ matrices that are invariant under the action of $\AutZ(q,n)$: 
\[
\mathcal{A}_{q,n}=\left\{\sum_{(i,j,t,p)\in I(q,n)}x_{i,j}^{t,p}M_{i,j}^{t,p}\mid x_{i,j}^{t,p}\in \C\right\}.
\]
This space is a $C^*$-algebra and coincides with the Terwilliger algebra of the nonbinary Hamming scheme. An explicit block diagonalisation of $\mathcal{A}_{q,n}$ was given in \cite{GST06}. To describe it, define for any nonnegative integers $i,j,t,p,a,k$ the number 
\[
\alpha(i,j,t,p,a,k)=\beta^{n-a,t-a}_{i-a,j-a,k-a}\cdot(q-1)^{\tfrac{1}{2}(i+j)-t}\cdot \sum_{g=0}^p(-1)^{a-g}\tbinom{a}{g}\tbinom{t-a}{p-g}(q-2)^{t-a-p+g}.
\]

\begin{theorem}[\cite{GST06}]\label{thm:blocknonbin}
The following map is a~$*$-isomorphism of algebras: 
\begin{align}
\phi: \mathcal{A}_{q,n} &\to \bigoplus_{0\leq a\leq k\leq n+a-k} \mathbb{C}^{(n+a-2k+1) \times (n+a-2k+1)}  \notag \\
\sum_{(i,j,t,p)} x_{i,j}^{t,p} M_{i,j}^{t,p} &\mapsto \bigoplus_{0\leq a\leq k\leq n+a-k} \left( \sum_{t,p} \tbinom{n+a-2k}{i-k}^{-\tfrac{1}{2}} \tbinom{n+a-2k}{j-k}^{-\tfrac{1}{2}} \alpha(i,j,t,p,a,k) x_{i,j}^{t,p} \right)_{i,j=k}^{n+a-k}. \label{eq:starisononbin}
\end{align}
\end{theorem}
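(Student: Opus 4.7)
The plan is to follow the representation-theoretic strategy used by Schrijver~\cite{Sch05} in the binary case and lift it to the wreath-product symmetry of the nonbinary case. Since $\mathcal{A}_{q,n}$ is precisely the commutant of the action of $\AutZ(q,n)\cong S_{q-1}\wr S_n$ on $\C^{[q]^n}$, the Artin--Wedderburn theorem gives a $*$-isomorphism $\mathcal{A}_{q,n}\cong\bigoplus_\lambda M_{m_\lambda}(\C)$, where $\lambda$ runs over the isomorphism classes of irreducible $\AutZ(q,n)$-submodules of $\C^{[q]^n}$ and $m_\lambda$ is the multiplicity. The task then breaks into three parts: identify the indexing set $\{(a,k):0\leq a\leq k\leq n+a-k\}$, identify the block sizes $m_{(a,k)}=n+a-2k+1$, and make the isomorphism explicit by supplying orthonormal intertwiners whose matrix coefficients on $M_{i,j}^{t,p}$ collapse to the stated formula.

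First I would decompose $\C^q$ under the stabiliser $S_{q-1}$ of $0$ as $\C\langle e_0\rangle\oplus\C\langle e_1+\cdots+e_{q-1}\rangle\oplus V_{\mathrm{std}}$, where $V_{\mathrm{std}}$ is the $(q-2)$-dimensional standard representation. This induces a tensor grading of $\C^{[q]^n}=(\C^q)^{\otimes n}$ in which the integer $a$ records the number of positions contributing a $V_{\mathrm{std}}$-factor. The remaining $n-a$ positions carry only the two one-dimensional $S_{q-1}$-characters, and after gathering positions of equal character type the residual $S_n$-action reduces to that on a binary Hamming space of length $n-a$, to which Schrijver's binary block-diagonalisation (Theorem~\ref{thm:blockhammingcube}) applies. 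The parameter $k$ then plays the role of the harmonic degree in that reduced cube, which explains both the block size $n+a-2k+1$ and the appearance of the factor $\beta^{n-a,t-a}_{i-a,j-a,k-a}$.

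Next I would build orthonormal intertwiners $v_{i,a,k}$ for $k\leq i\leq n+a-k$, obtained by tensoring, on $a$ marked positions, standard-rep basis vectors of $V_{\mathrm{std}}$ (an $S_{q-1}$-alternating combination of nonzero symbols) with, on the unmarked positions, a dual-Hahn-type combination of degree $k$ over the binary ``zero vs.\ nonzero'' character structure, and then $S_n$-equivariantly averaging over the choice of marked positions. Defining $\phi$ block-by-block by $M\mapsto\bigl(\langle v_{i,a,k},M\,v_{j,a,k}\rangle\bigr)_{i,j}$ and absorbing normalisations into the binomial prefactors $\binom{n+a-2k}{i-k}^{-1/2}\binom{n+a-2k}{j-k}^{-1/2}$ displayed in \eqref{eq:starisononbin} then yields the desired $*$-isomorphism.

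The main obstacle is verifying the closed-form evaluation of $\langle v_{i,a,k}, M_{i',j'}^{t,p}\,v_{j,a,k}\rangle$. This reduces to a careful orbit count: the power $(q-1)^{(i+j)/2-t}$ arises as the square root of the number of free nonzero-symbol assignments at positions in the support-symmetric-difference $\supp(\uu)\triangle\supp(\vv)$ (the square root appearing because the intertwiners are normalised symmetrically in $\uu$ and $\vv$), the inclusion--exclusion sum $\sum_{g=0}^p(-1)^{a-g}\binom{a}{g}\binom{t-a}{p-g}(q-2)^{t-a-p+g}$ comes from the $S_{q-1}$-alternating part of the $v$'s after fixing how many of the $a$ marked positions fall in $\supp(\uu)\cap\supp(\vv)$ with equal versus distinct symbols, and the factor $\beta^{n-a,t-a}_{i-a,j-a,k-a}$ is inherited from restricting the binary identity of Theorem~\ref{thm:blockhammingcube} to the $n-a$ unmarked positions. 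Once this combinatorial identity is established, $*$-preservation is immediate from $(M_{i,j}^{t,p})^*=M_{j,i}^{t,p}$, multiplicativity of $\phi$ follows from the Artin--Wedderburn framework (equivalently, from orthogonality of intertwiners across blocks), and bijectivity is confirmed by the dimension identity $\sum_{(a,k)}(n+a-2k+1)^2=|I(q,n)|$.
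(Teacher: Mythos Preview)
The paper does not supply a proof of Theorem~\ref{thm:blocknonbin}: the result is quoted from \cite{GST06} and used as a black box, so there is no in-paper argument to compare against. Your outline is a sound strategy and is close in spirit to the original proof in \cite{GST06}: one decomposes $\C^q$ under the point stabiliser $S_{q-1}$ as $\langle e_0\rangle\oplus\langle\sum_{i\geq1}e_i\rangle\oplus V_{\mathrm{std}}$, lets $a$ count the tensor positions carrying a copy of the $(q-2)$-dimensional piece $V_{\mathrm{std}}$, and observes that on the remaining $n-a$ positions the problem collapses to Schrijver's binary block-diagonalisation with shifted parameters---which is exactly why the displayed formula for $\alpha(i,j,t,p,a,k)$ contains the binary coefficient $\beta^{\,n-a,\,t-a}_{\,i-a,\,j-a,\,k-a}$ and why the block indexed by $(a,k)$ has size $(n-a)-2(k-a)+1=n+a-2k+1$.

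Two minor points of execution. First, calling the $V_{\mathrm{std}}$-contribution ``$S_{q-1}$-alternating'' is a misnomer: $V_{\mathrm{std}}$ is the standard representation, not the sign representation, and what actually produces the signs $(-1)^{a-g}$ in the $g$-sum is the orthogonality of $V_{\mathrm{std}}$ to the all-ones vector in $\C^{q-1}$, not any alternating character. Second, your heuristic attributions of the factor $(q-1)^{(i+j)/2-t}$ and of the inclusion--exclusion sum over $g$ are plausible but are the one place where real work remains; turning them into an identity for $\langle v_{i,a,k},\,M_{i,j}^{t,p}\,v_{j,a,k}\rangle$ requires a genuine orbit computation rather than a one-line justification. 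Neither of these is a gap in the strategy, only in the write-up.
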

This theorem implies that a matrix $M\in \mathcal{A}_{q,n}$ is positive semidefinite if and only if $\phi(A)$ is positive semidefinite. We also note that for real valued matrices $A$, the image $\phi(A)$ is again real. To see the size of the reduction, note  that the matrices in $\mathcal{A}_{q,n}$ have size $q^n \times q^n$, while for~$q\geq 3$ we have (cf.~\cite{GST06}) \begin{align}| I(q,n)|  = \sum_{0\leq a\leq k\leq n+a-k} (n+a-2k+1)^2= \binom{n+4}{4}, \label{dimAqn}\end{align}  which equals the dimension of $\mathcal{A}_{q,n}$.

As in the binary case, we need to deal with bordered matrices $R(M)$ for $M\in \mathcal{A}_{q,n}$. To express semidefiniteness of $R(M)$ in terms of $\phi(M)$, we need the following two lemmas\footnote{Analogous to the binary case (see Remark~\ref{rem:basisvectors}), Lemma~\ref{lem:border} can also be derived directly from the explicit block diagonalisation given in~\cite{GST06}.}.
\begin{lemma}\label{lem:alphas}
We have 
\[
\sum_{p,t}\alpha(i,j,t,p,a,k)=\begin{cases}\tbinom{n}{i}\tbinom{n}{j}(q-1)^{\tfrac{1}{2}(i+j)}&\text{if $a=k=0$}\\0&\text{otherwise}.\end{cases}
\]
\end{lemma}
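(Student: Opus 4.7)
My plan is to reduce the claim to the identity
\[
\sum_{t=0}^{n} \beta_{i,j,k}^{t} \;=\; \begin{cases}\tbinom{n}{i}\tbinom{n}{j} & \text{if } k = 0,\\ 0 & \text{otherwise,}\end{cases}
\]
already established as equation~\eqref{eq:betasum} in the proof of Lemma~\ref{lem:imageofJ}. After substituting the definition of $\alpha$ and noting that the $\beta$ factor does not depend on $p$, the double sum factors as
\[
\sum_{t,p}\alpha(i,j,t,p,a,k) \;=\; \sum_{t} \beta^{n-a,\,t-a}_{i-a,\,j-a,\,k-a}\,(q-1)^{\tfrac{1}{2}(i+j)-t}\cdot S(t,a),
\]
where
\[
S(t,a) \;:=\; \sum_{p}\sum_{g=0}^{p}(-1)^{a-g}\tbinom{a}{g}\tbinom{t-a}{p-g}(q-2)^{t-a-p+g}.
\]
All of the nonbinary structure (the parameter $p$ and the factor $(q-2)$) is concentrated in $S(t,a)$, and my first step is to evaluate it.

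To evaluate $S(t,a)$, I would swap the order of summation and substitute $m=p-g$. The inner sum becomes $\sum_{m\geq 0}\tbinom{t-a}{m}(q-2)^{t-a-m}=(q-1)^{t-a}$ by the binomial theorem, leaving $(q-1)^{t-a}\sum_{g=0}^{a}(-1)^{a-g}\tbinom{a}{g}=(q-1)^{t-a}(1-1)^a$. This evaluates to $(q-1)^{t-a}$ when $a=0$ and to $0$ when $a\geq 1$, which already accounts for the ``otherwise'' branch of the lemma whenever $a>0$.

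For $a=0$ the factors $(q-1)^{\tfrac{1}{2}(i+j)-t}$ and $(q-1)^{t-a}$ combine into $(q-1)^{\tfrac{1}{2}(i+j)}$, which is independent of $t$ and can be pulled outside, leaving $(q-1)^{\tfrac{1}{2}(i+j)}\sum_{t}\beta^{n,\,t}_{i,j,k}$. Applying~\eqref{eq:betasum} finishes the argument: the sum vanishes unless $k=0$, in which case it equals $\tbinom{n}{i}\tbinom{n}{j}$, matching the first case of the lemma.

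The only nontrivial step is the collapse of $S(t,a)$; once it is recognised as a binomial convolution followed by the alternating identity $\sum_{g=0}^{a}(-1)^{a-g}\tbinom{a}{g}=(1-1)^a$, the nonbinary reduction cleanly falls onto the binary identity~\eqref{eq:betasum} already in hand. I do not expect any genuine obstacle; the proof is essentially bookkeeping, with care needed only to verify that the range of $p$ can be taken unrestricted because $\tbinom{t-a}{p-g}$ automatically enforces $p\leq t$ whenever $g\leq a$.
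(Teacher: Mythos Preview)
Your proposal is correct and follows essentially the same route as the paper: you evaluate the inner $p$-sum via the substitution $m=p-g$, factor it as $(q-1)^{t-a}\sum_{g}(-1)^{a-g}\tbinom{a}{g}$, and then for $a=0$ invoke the binary identity~\eqref{eq:betasum} on $\sum_t\beta_{i,j,k}^{t}$. The paper does exactly this (with $s$ in place of your $m$), so there is nothing to add.
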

\begin{proof}
First we observe that by substituting $s=p-g$, we find
\begin{align*}
\sum_{p=0}^t\sum_{g=0}^p(-1)^{a-g}\tbinom{a}{g}\tbinom{t-a}{p-g}(q-2)^{t-a-p+g}& = \sum_{g\geq 0}\sum_{s\geq 0}(-1)^{a-g}\tbinom{a}{g}\tbinom{t-a}{s}(q-2)^{t-a-s} \\
& = \sum_{g\geq 0}(-1)^{a-g}\tbinom{a}{g}\cdot \sum_{s\geq 0}\tbinom{t-a}{s}(q-2)^{t-a-s}.  
\end{align*}
This expression is zero if $a>0$ and is equal to $(q-1)^t$ if $a=0$. 

So we may restrict to the case $a=0$. By (\ref{eq:betasum}) we have
\[
\sum_{t,p}\alpha(i,j,t,p,0,k)=(q-1)^{\tfrac{1}{2}(i+j)}\sum_t\beta_{i,j,k}^{t}=(q-1)^{\tfrac{1}{2}(i+j)}\tbinom{n}{i}\tbinom{n}{j}\delta_{k,0}.
\]
This concludes the proof.
\end{proof}

Similar to the binary case, we have
\[
\one_{S_i(\zero)}(\one_{S_j(\zero)})\T=\sum_{t,p\mid (i,j,t,p)\in I(q,n)}M_{i,j}^{t,p}\in \mathcal{A}_{q,n}.
\]
\begin{lemma}\label{lem:border}
Let $\bigoplus_{0\leq a\leq k\leq n+a-k} A_{a,k}$ be the image of $\one_{S_i(\zero)}(\one_{S_j(\zero)})\T$ under the map $\phi$. Then 
\[
A_{0,0}=(q-1)^{\tfrac{1}{2}(i+j)}\tbinom{n}{i}^{\tfrac{1}{2}}\tbinom{n}{j}^{\tfrac{1}{2}}\big(\delta_{i,i'}\delta_{j,j'}\big)_{i',j'=0}^n,
\]
and $A_{a,k}$ is the zero matrix for $(a,k)\neq (0,0)$.
\end{lemma}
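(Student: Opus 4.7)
The plan is to mimic the proof of Lemma~\ref{lem:imageofJ} in the binary setting, with Lemma~\ref{lem:alphas} replacing equation~\eqref{eq:betasum}. The starting point is the observation displayed just before the lemma, namely
\[
\one_{S_i(\zero)}(\one_{S_j(\zero)})\T=\sum_{t,p\,\mid\, (i,j,t,p)\in I(q,n)} M_{i,j}^{t,p},
\]
so that this matrix, viewed as an element $\sum_{(i',j',t,p)\in I(q,n)} x_{i',j'}^{t,p} M_{i',j'}^{t,p}$ of $\mathcal{A}_{q,n}$, has coefficients $x_{i',j'}^{t,p}=1$ when $(i',j')=(i,j)$ and $(i,j,t,p)\in I(q,n)$, and $x_{i',j'}^{t,p}=0$ otherwise.

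Next, I apply Theorem~\ref{thm:blocknonbin} directly. By the explicit formula~\eqref{eq:starisononbin}, the $(i',j')$-entry of the block $A_{a,k}$ equals
\[
\tbinom{n+a-2k}{i'-k}^{-\tfrac{1}{2}} \tbinom{n+a-2k}{j'-k}^{-\tfrac{1}{2}} \sum_{t,p}\alpha(i',j',t,p,a,k)\, x_{i',j'}^{t,p}.
\]
Because $x_{i',j'}^{t,p}$ vanishes unless $(i',j')=(i,j)$, the block $A_{a,k}$ has a nonzero entry only at position $(i,j)$, where it equals
\[
\tbinom{n+a-2k}{i-k}^{-\tfrac{1}{2}} \tbinom{n+a-2k}{j-k}^{-\tfrac{1}{2}} \sum_{t,p}\alpha(i,j,t,p,a,k).
\]

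Now I invoke Lemma~\ref{lem:alphas}: the sum $\sum_{t,p}\alpha(i,j,t,p,a,k)$ is zero whenever $(a,k)\neq(0,0)$, so every block $A_{a,k}$ with $(a,k)\neq(0,0)$ is identically the zero matrix. For the block $A_{0,0}$, the same lemma gives $\sum_{t,p}\alpha(i,j,t,p,0,0)=\tbinom{n}{i}\tbinom{n}{j}(q-1)^{\tfrac{1}{2}(i+j)}$, and the normalizing binomial factors reduce to $\tbinom{n}{i}^{-\tfrac{1}{2}}\tbinom{n}{j}^{-\tfrac{1}{2}}$ (using $a=k=0$). Multiplying these together yields exactly $(q-1)^{\tfrac{1}{2}(i+j)}\tbinom{n}{i}^{\tfrac{1}{2}}\tbinom{n}{j}^{\tfrac{1}{2}}$ at position $(i,j)$ and zero elsewhere, which matches the stated formula for $A_{0,0}$.

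There is no real obstacle here: once Lemma~\ref{lem:alphas} is in hand, the proof is a one-line bookkeeping check of~\eqref{eq:starisononbin}, and the only thing to be careful about is keeping track of which indices label blocks ($a,k$), which label the matrix basis of $\mathcal{A}_{q,n}$ ($t,p$), and which label entries within a block ($i',j'$, here forced to equal $i,j$ by the structure of the input matrix).
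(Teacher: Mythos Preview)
Your proof is correct and follows essentially the same approach as the paper: the paper's proof is a single sentence observing that the identity $\one_{S_i(\zero)}(\one_{S_j(\zero)})\T=\sum_{p,t}M_{i,j}^{t,p}$ together with Lemma~\ref{lem:alphas} immediately yields the result, and you have simply spelled out the intermediate application of the formula~\eqref{eq:starisononbin}.
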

\begin{proof}
Since $\one_{S_i(\zero)}(\one_{S_j(\zero)})\T=\sum_{p,t\mid (i,j,t,p)\in I(q,n)} M_{i,j}^{t,p}$,  
this follows directly from Lemma~\ref{lem:alphas}.
\end{proof}

\begin{proposition}\label{prop:RMpsdnonbin}
Let $M=\sum_{(i,j,t,p)\in I(q,n)}x_{i,j}^{t,p}M_{i,j}^{t,p}\in \mathcal{A}_{q,n}$ and let 
\[
\phi(M)=\bigoplus_{0\leq a\leq k\leq n+a-k} A_{a,k}.
\]
Then $R(M)$ is positive semidefinite if and only if $A_{a,k}\succeq 0$ for all $(a,k)\neq (0,0)$ and 
\[
\begin{pmatrix}1&y^*\\y&A_{0,0}\end{pmatrix}\succeq 0,
\]
where $y_i= x_{i,i}^{i,i} (q-1)^{i/2}\tbinom{n}{i}^{\tfrac{1}{2}}$.
\end{proposition}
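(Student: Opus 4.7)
The plan is to mirror the binary argument (Proposition~\ref{prop:RMpsdbinary}) almost verbatim, using two applications of Schur's complement with the rank-one correction absorbed by Lemma~\ref{lem:border}. The key structural observation is that $M_{\uu,\uu}$ depends only on $|\supp(\uu)|$ since the orbit of $(\uu,\uu)$ is indexed by $(i,i,i,i)$ with $i=|\supp(\uu)|$; consequently
\begin{equation*}
\diag M \;=\; \sum_{i=0}^{n} x_{i,i}^{i,i}\,\one_{S_i(\zero)}.
\end{equation*}

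First I would Schur-complement the top-left $1\times 1$ block of $R(M)$ to get
\begin{equation*}
R(M)\succeq 0 \quad\Longleftrightarrow\quad M-(\diag M)(\diag M)^{*}\succeq 0.
\end{equation*}
The second matrix still lies in $\mathcal{A}_{q,n}$, because
\begin{equation*}
(\diag M)(\diag M)^{*} \;=\; \sum_{i,j=0}^{n} x_{i,i}^{i,i}\,(x_{j,j}^{j,j})^{*}\, \one_{S_i(\zero)}\one_{S_j(\zero)}^{\sf T}
\end{equation*}
is a linear combination of the algebra elements treated in Lemma~\ref{lem:border}. Applying the $*$-isomorphism $\phi$ and using linearity, each term $\one_{S_i(\zero)}\one_{S_j(\zero)}^{\sf T}$ contributes a $(q-1)^{(i+j)/2}\tbinom{n}{i}^{1/2}\tbinom{n}{j}^{1/2}$ only in entry $(i,j)$ of the $(a,k)=(0,0)$ block, and zero in all other blocks. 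Thus for $(a,k)\neq(0,0)$ the image block of $M-(\diag M)(\diag M)^{*}$ is exactly $A_{a,k}$, while the $(0,0)$ block becomes $A_{0,0}-yy^{*}$ with $y_i=x_{i,i}^{i,i}(q-1)^{i/2}\tbinom{n}{i}^{1/2}$.

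Finally I would reverse the Schur-complement step: $A_{0,0}-yy^{*}\succeq 0$ is equivalent to
\begin{equation*}
\begin{pmatrix} 1 & y^{*} \\ y & A_{0,0} \end{pmatrix}\succeq 0,
\end{equation*}
and combining this with $A_{a,k}\succeq 0$ for $(a,k)\neq(0,0)$ yields the equivalence asserted in the proposition. The main obstacle, modest as it is, is keeping track of the weights $(q-1)^{i/2}$ that the $\alpha$-coefficients produce via Lemma~\ref{lem:alphas}: these weights must be absorbed into the components of $y$ so that $yy^{*}$ exactly matches the $(0,0)$-block image of $(\diag M)(\diag M)^{*}$. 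Apart from this bookkeeping, the argument is a direct transplant of the binary proof.
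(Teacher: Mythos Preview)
Your proposal is correct and follows essentially the same approach as the paper's own proof: Schur-complement away the border, apply the $*$-isomorphism $\phi$ together with Lemma~\ref{lem:border} to see that the rank-one correction lands entirely in the $(0,0)$ block as $yy^{*}$, and then Schur-complement back. The paper's proof is exactly this chain of equivalences, so there is nothing to add.
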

\begin{proof}
By taking Schur complements, using Theorem~\ref{thm:blocknonbin} and using Lemma~\ref{lem:border} we find 
\begin{alignat*}{3}
R(M)\succeq 0 &\iff&&\quad M-\sum_{i,j=0}^nx_{i,i}^{i,i}(x_{j,j}^{j,j})^*\one_{S_i(\zero)}(\one_{S_j(\zero)})\T\succeq 0\\
&\iff&&\quad \phi\left(M-\sum_{i,j=0}^nx_{i,i}^{i,i}(x_{j,j}^{j,j})^*\one_{S_i(\zero)}(\one_{S_j(\zero)})\T \right)\succeq 0\\
&\iff&&\quad A_{a,k}\succeq 0 \text{ for $(a,k)\neq (0,0)$,}\\
&&&\quad A_{0,0}-\sum_{i,j=0}^nx_{i,i}^{i,i}(x_{j,j}^{j,j})^*(q-1)^{\tfrac{1}{2}(i+j)}\tbinom{n}{i}^{\tfrac{1}{2}}\tbinom{n}{j}^{\tfrac{1}{2}}\big(\delta_{i,i'}\delta_{j,j'}\big)_{i',j'=0}^n\succeq 0.
\end{alignat*}
The result now follows by taking Schur complements.
\end{proof}

\section{The symmetry-reduced SDP bound}
In this section we demonstrate how the block-diagonalization from the Terwilliger algebra as described in Section~\ref{sec:terwilliger} can be employed to significantly reduce the new SDP bound for covering codes from Section~\ref{sec:generalsdp}.

\subsection{The SDP bound for binary covering codes\label{sec:binary}}

 In the binary case, the matrices~$M'$,~$M''$ and~$N$ from Section~\ref{sec:generalsdp} are contained in the algebra~$\mathcal{A}_{2,n}$. So we can write
\begin{align}\label{eq:Mprimebinary}
M'= \sum_{(i,j,t) \in I(2,n)} x_{i,j}^t M_{i,j}^t 
\end{align}
for real numbers $x_{i,j}^t$.
\begin{lemma}\label{lem:matricesbinary}
We have 
\[
M=\sum_{(i,j,t)\in I(2,n)} x_{i+j-2t,0}^0 M_{i,j}^t,\qquad M'' =  \sum_{(i,j,t)\in I(2,n)} (x_{i+j-2t,0}^0 - x_{i,j}^t) M_{i,j}^t.
\]
\end{lemma}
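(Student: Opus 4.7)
The plan is to verify both identities entrywise, using Proposition~\ref{prop:first}(i), which expresses $M$ and $M''$ directly in terms of $M'$: namely, $M_{\uu,\vv}=M'_{\zero,\vv-\uu}$ and $M''_{\uu,\vv}=M'_{\zero,\vv-\uu}-M'_{\uu,\vv}$. Given these two formulas, the content of the lemma is entirely a matter of reading off entries of $M'$ via the expansion~\eqref{eq:Mprimebinary}.

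First I would fix $\uu,\vv\in\{0,1\}^n$ with $\overline{d}(\uu,\vv)=(i,j,t)$, so that~\eqref{eq:Mprimebinary} gives $M'_{\uu,\vv}=x_{i,j}^t$. Over $\F_2$, coordinatewise addition and subtraction coincide, and $\supp(\vv-\uu)$ is the symmetric difference of $\supp(\uu)$ and $\supp(\vv)$; hence $|\supp(\vv-\uu)|=i+j-2t$. This gives $\overline{d}(\zero,\vv-\uu)=(0,\,i+j-2t,\,0)$, and therefore $M'_{\zero,\vv-\uu}=x_{0,\,i+j-2t}^{0}$.

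To match the form of the statement, I would next invoke the symmetry $x_{i,j}^{t}=x_{j,i}^{t}$. This is immediate from Proposition~\ref{prop:first}(iv): swapping $\uu$ and $\vv$ leaves the unordered set $\{\zero,\uu,\vv\}$ unchanged, so $M'_{\uu,\vv}=M'_{\vv,\uu}$, which at the level of coefficients reads $x_{i,j}^{t}=x_{j,i}^{t}$. In particular $x_{0,\,i+j-2t}^{0}=x_{i+j-2t,\,0}^{0}$, yielding $M_{\uu,\vv}=x_{i+j-2t,\,0}^{0}$. Summing over the orbits $(i,j,t)\in I(2,n)$ gives the first identity. The second identity then follows by subtracting the expansion~\eqref{eq:Mprimebinary} of $M'$ from that of $M$, using $M''_{\uu,\vv}=M_{\uu,\vv}-M'_{\uu,\vv}$ (which is just Proposition~\ref{prop:first}(i) rewritten).

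There is no genuine obstacle here: the lemma is pure bookkeeping once Proposition~\ref{prop:first} is in place. The only subtle point is to remember the orbit-symmetry $x_{i,j}^{t}=x_{j,i}^{t}$, which is needed to move between the two entries $(0,i+j-2t,0)$ and $(i+j-2t,0,0)$ of $I(2,n)$.
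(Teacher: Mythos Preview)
Your proof is correct and follows essentially the same approach as the paper: both arguments identify the entries via Proposition~\ref{prop:first}(i) together with the observation that $d(\uu,\vv)=i+j-2t$ when $\overline{d}(\uu,\vv)=(i,j,t)$. You are in fact more explicit than the paper in justifying the symmetry $x_{0,i+j-2t}^{0}=x_{i+j-2t,0}^{0}$, which the paper uses without comment (it follows equally from the symmetry of $M'$).
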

\begin{proof}
Note that if $\uu,\vv\in \E$ with $\overline{d}(\uu,\vv)=(i,j,t)$, then $d(\uu,\vv)=i+j-2t$. So $M_{\uu,\vv} = x_{i+j-2t,0}^0$ and $M''_{\uu,\vv} = x_{i+j-2t,0}^0 - x_{i,j}^t$ by Proposition~\ref{prop:first} (i). 
\end{proof}

\begin{remark}[Interpretation of the $x_{i,j}^t$]
The coefficients $x_{i,j}^{t}$ provide insight into the structure of the code~$C$ by extending the concept of distance distribution in Delsarte's linear programming approach to triples. Whereas the distance distribution tracks the number of pairs in $C$ at each distance $d$, the coefficients $x_{i,j}^{t}$ count the number of \emph{triples} $(\uu, \vv, \ww) \in C^3$ that belong to an equivalence class of $\E^3$ under the group action of $\text{Aut}(2,n)$. We proceed by explaining this formally. 
Define
\[
X_{i,j,t} := \{ (\uu, \vv, \ww) \in \E \times \E \times \E \mid \overline{d}( \vv-\uu, \ww-\uu) = (i, j, t) \},
\]
for  $(i, j, t) \in I(2, n) $. For each $(i, j, t) \in I(2, n)$, define the numbers
\[
\lambda_{i,j}^{t} := |(C \times C \times C) \cap X_{i,j,t}|,
\]
and let
\[
\gamma_{i,j}^{t} := |(\{\mathbf{0}\} \times \E \times \E) \cap X_{i,j,t}| = \binom{n}{ i-t, j-t,t}
\]
be the number of nonzero entries of $ M_{i,j}^t$. Then the coefficients $x_{i,j}^{t}$ are given by
\begin{align}\label{eq:coefficientsxijt}
x_{i,j}^{t} = 2^{-n} (\gamma_{i,j}^{t})^{-1} \lambda_{i,j}^{t}.
\end{align}
To see~\eqref{eq:coefficientsxijt}, notice that the matrices $ M_{i,j}^t$ are pairwise orthogonal, and their inner products satisfy $\langle M_{i,j}^t, M_{i,j}^t \rangle = \gamma_{i,j}^{t}$, for $(i, j, t) \in I(2, n)$. Thus, we compute:
\begin{align*}
\langle M', M_{i,j}^t \rangle &= |\text{Aut}(2,n)|^{-1} \sum_{\uu \in C} \sum_{\substack{\sigma \in \text{Aut}(2,n) \\ \sigma \uu = \mathbf{0}}} \langle M_{\sigma C}, M_{i,j}^t \rangle \\
&= |\text{Aut}(2,n)|^{-1} \cdot |\text{Aut}_{\mathbf{0}}(2,n)| \sum_{\uu \in C}  |(\{\uu\} \times C \times C) \cap X_{i,j,t}| \\
&= 2^{-n} |(C \times C \times C) \cap X_{i,j,t}| = 2^{-n} \lambda_{i,j}^{t}.
\end{align*}
Hence we have
\[
M' = 2^{-n} \sum_{(i,j,t) \in I(2,n)} \lambda_{i,j}^{t} (\gamma_{i,j}^{t})^{-1} M_{i,j}^t.
\]
Comparing the coefficients of each $ M_{i,j}^t$ in this expression with those in~\eqref{eq:Mprimebinary} establishes~\eqref{eq:coefficientsxijt}. 
\end{remark}

\begin{proposition}[Basic inequalities and symmetry]\label{prop:firstbinary}
    The $x_{i,j}^t$ satisfy 
\begin{align}\label{constraints:linearbinary}
\text{\emph{(i)}}& \quad\quad 0 \leq x_{i,j}^t \leq x_{i,i}^i, \\ 
\text{\emph{(ii)}}& \quad\quad x_{i,0}^0 + x^0_{i+j-2t,0}-x_{0,0}^0  \leq x_{i,j}^t\leq    x^0_{i+j-2t,0},\notag \\ 
\text{\emph{(iii)}}&\quad\quad  x_{i,j}^t = x_{i',j'}^{t'} \,\,\,\text{ if $(i,j,i+j-2t)$ is a permutation of $(i',j',i'+j'-2t')$},\notag
\end{align}
\end{proposition}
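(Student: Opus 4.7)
The plan is to obtain each of (i), (ii), (iii) as a direct translation into coordinates of the corresponding statement in Proposition~\ref{prop:first}, using the explicit expansions $M'=\sum x_{i,j}^t M_{i,j}^t$ and the formulas from Lemma~\ref{lem:matricesbinary} for $M$ and $M''$. Throughout, if $\overline{d}(\uu,\vv)=(i,j,t)$, then $M'_{\uu,\vv}=x_{i,j}^t$, $M_{\uu,\vv}=x_{i+j-2t,0}^0$, and $M''_{\uu,\vv}=x_{i+j-2t,0}^0-x_{i,j}^t$.

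First I would prove (iii), because the other two parts invoke it. By Proposition~\ref{prop:first}(iv), $M'_{\uu,\vv}$ depends only on the orbit of the set $\{\zero,\uu,\vv\}$ under $\Aut(2,n)$. I would note that this orbit is determined by the multiset of pairwise Hamming distances, which for a pair $(\uu,\vv)$ with $\overline{d}(\uu,\vv)=(i,j,t)$ is exactly $\{i,\,j,\,i+j-2t\}$. Permuting the roles of the three points in the triple then permutes this multiset, and concretely: swapping $\uu \leftrightarrow \vv$ sends $(i,j,t)\mapsto(j,i,t)$; swapping $\zero\leftrightarrow \uu$ replaces $\vv$ by $\vv-\uu$ and gives $(i,j,t)\mapsto (i,i+j-2t,i-t)$; and swapping $\zero\leftrightarrow \vv$ gives $(i,j,t)\mapsto(i+j-2t,j,j-t)$. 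A quick check confirms that under each of these transformations the tuple $(i,j,i+j-2t)$ undergoes exactly the corresponding transposition, which proves (iii).

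For (i), Proposition~\ref{prop:first}(ii) reads $0\leq M'_{\uu,\vv}\leq M'_{\zero,\uu}$. Substituting the coordinate expressions yields $0\leq x_{i,j}^t\leq x_{0,i}^0$. Applying (iii) to the triple $\{\zero,\zero,\uu\}$ vs.\ $\{\zero,\uu,\uu\}$, whose associated tuples $(0,i,0)$ and $(i,i,i)$ both give the multiset $\{0,i,i\}$, collapses $x_{0,i}^0$ to $x_{i,i}^i$. For (ii), Proposition~\ref{prop:first}(iii) gives $0\leq M''_{\uu,\vv}\leq M''_{\uu,\uu}$. The lower bound together with Lemma~\ref{lem:matricesbinary} gives at once $x_{i,j}^t\leq x_{i+j-2t,0}^0$. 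For the other inequality, I would compute $M''_{\uu,\uu}=M'_{\zero,\zero}-M'_{\uu,\uu}=x_{0,0}^0-x_{i,i}^i$, and then use $x_{i,i}^i=x_{i,0}^0$ (another instance of (iii)) to rewrite the inequality as $x_{i,0}^0+x_{i+j-2t,0}^0-x_{0,0}^0\leq x_{i,j}^t$.

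I do not expect a real obstacle: every step is either a direct substitution or a small bookkeeping check. The only point that needs a careful word is the reduction of the abstract orbit condition in Proposition~\ref{prop:first}(iv) to a permutation of the triple $(i,j,i+j-2t)$; once that is in place, (i) and (ii) reduce to one-line translations of the inequalities in Proposition~\ref{prop:first} (ii) and (iii).
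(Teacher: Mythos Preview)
Your proposal is correct and follows exactly the route the paper takes: the paper's proof is the single line ``This follows directly from Proposition~\ref{prop:first} (ii), (iii), (iv) combined with Lemma~\ref{lem:matricesbinary},'' and your write-up is a faithful unpacking of that sentence, including the identification of the $\Aut(2,n)$-orbit of $\{\zero,\uu,\vv\}$ with the multiset $\{i,j,i+j-2t\}$ and the use of $x_{0,i}^0=x_{i,i}^i=x_{i,0}^0$ to recast the bounds.
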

\begin{proof}
This follows directly from Proposition~\ref{prop:first} (ii), (iii), (iv) combined with Lemma~\ref{lem:matricesbinary}.
\end{proof} 

We now employ the block-diagonalization of~$\mathcal{A}_{2,n}$ from Section~\ref{sec:terwilligerbinary} to the matrices of Proposition~\ref{prop:second}.

\begin{proposition}[Semidefiniteness]\label{prop:secondbinary}
    The following matrices are positive semidefinite:
\begin{align}\label{eq:terwilligerbinarypsd}
\left(\sum_{t=0}^n \beta_{i,j,k}^t x_{i,j}^t  \right)_{i,j=k}^{n-k} \succeq 0, \quad &\left(\sum_{t=0}^n \beta_{i,j,k}^t (x^0_{i+j-2t,0}-x_{i,j}^t)  \right)_{i,j=k}^{n-k} \succeq 0, \quad \text{for $k=1,\ldots, \lfloor\tfrac{n}{2}\rfloor$},\\
\left(\sum_{t=0}^n \beta_{i,j,0}^t x_{i,j}^t  \right)_{i,j=0}^{n} \succeq 0,\quad &\begin{pmatrix} 1-x_{0,0}^0 & y\T \\ y & L \end{pmatrix} \succeq 0, \notag
\end{align}
where $L= \left(\sum_{t=0}^n \beta_{i,j,0}^t (x^0_{i+j-2t,0}-x_{i,j}^t)  \right)_{i,j=0}^{n}$, and~$y_i:=\tbinom{n}{i}(x_{0,0}^0-x_{i,0}^0)$, \text{ for $i=0,\ldots,n$}. 
\end{proposition}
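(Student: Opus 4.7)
The plan is to apply the $*$-isomorphism $\phi$ from Theorem~\ref{thm:blockhammingcube} to the PSD matrices $M'$, $M''$ and to the bordered matrix $R(1-M'_{\zero,\zero},M'')$ supplied by Proposition~\ref{prop:second}. Since $\phi$ is a $*$-isomorphism, semidefiniteness transfers block by block, and the four conditions in the statement are (after a positive diagonal congruence) exactly what $\phi$ outputs on these three matrices.

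First, by Lemma~\ref{lem:matricesbinary} and~\eqref{eq:Mprimebinary}, the coefficients of $M'$ and $M''$ in the basis $\{M_{i,j}^t\}$ are $x_{i,j}^t$ and $x^0_{i+j-2t,0}-x_{i,j}^t$, respectively. Substituting into~\eqref{eq:stariso}, the $k$-th block of $\phi(M')$ equals $D_k^{-1}B_k^{M'}D_k^{-1}$, where $B_k^{M'}=\bigl(\sum_t \beta_{i,j,k}^t x_{i,j}^t\bigr)_{i,j=k}^{n-k}$ and $D_k=\diag\bigl(\binom{n-2k}{i-k}^{1/2}\bigr)_{i=k}^{n-k}$ is a positive diagonal matrix. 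Since $D_k$ is invertible, the $k$-th block being PSD is equivalent to $B_k^{M'}\succeq 0$. As $M'\succeq 0$, all blocks are PSD, giving condition~(1) for $k=1,\ldots,\lfloor n/2\rfloor$ and condition~(3) for $k=0$. The same argument applied to $M''\succeq 0$ yields condition~(2) for $k\geq 1$.

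For the bordered matrix, I would apply Proposition~\ref{prop:RMpsdbinary} to $M''$ with $c=1-x_{0,0}^0$, using $M'_{\zero,\zero}=x_{0,0}^0$. The coefficient of $M_{i,i}^i$ in $M''$ equals $x^0_{0,0}-x_{i,i}^i$, and by the permutation symmetry of Proposition~\ref{prop:firstbinary}(iii) we have $x_{i,i}^i=x_{i,0}^0$. So the proposition yields $\begin{pmatrix} 1-x_{0,0}^0 & \tilde y\T \\ \tilde y & \phi(M'')_0 \end{pmatrix}\succeq 0$ with $\tilde y_i=(x_{0,0}^0-x_{i,0}^0)\binom{n}{i}^{1/2}$. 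Conjugating by the positive diagonal matrix $\diag(1,D_0)$ preserves PSD and transforms this into the stated matrix, with $L=D_0\phi(M'')_0 D_0=\bigl(\sum_t \beta_{i,j,0}^t(x^0_{i+j-2t,0}-x_{i,j}^t)\bigr)_{i,j=0}^n$ and entries $y_i=\binom{n}{i}^{1/2}\tilde y_i=\binom{n}{i}(x_{0,0}^0-x_{i,0}^0)$, which is condition~(4).

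The main obstacle is purely book-keeping: tracking the normalizing factors $\binom{n-2k}{i-k}^{-1/2}$ hidden inside $\phi$ and absorbing them into positive diagonal congruences, so that the blocks land in the un-normalized form stated in the proposition. No new semidefiniteness argument is needed beyond Proposition~\ref{prop:second} and Proposition~\ref{prop:RMpsdbinary}.
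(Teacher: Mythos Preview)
Your proposal is correct and follows the same approach as the paper: apply the block-diagonalization~\eqref{eq:Mprimesymmetryreduction} to $M'$ and $M''$ (from Proposition~\ref{prop:second}) and Proposition~\ref{prop:RMpsdbinary} to $R(1-x_{0,0}^0,M'')$. You are simply more explicit than the paper about the positive diagonal congruences that strip off the $\binom{n-2k}{i-k}^{-1/2}$ normalizations and convert $\tilde y_i=\binom{n}{i}^{1/2}(x_{0,0}^0-x_{i,0}^0)$ into $y_i=\binom{n}{i}(x_{0,0}^0-x_{i,0}^0)$, and about using the symmetry $x_{i,i}^i=x_{i,0}^0$ to identify the diagonal of $M''$.
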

\begin{proof} 
This follows from applying the block-diagonalization from Section~\ref{sec:terwilligerbinary} to the matrices in Proposition~\ref{prop:second}. Here Equation~\eqref{eq:Mprimesymmetryreduction} directly gives the reduction of the matrix~$M'$, and  Proposition~\ref{prop:RMpsdbinary} combined with Equation~\eqref{eq:Mprimesymmetryreduction} gives the reduction of the matrix $R(1-x_{0,0}^0,M'')$.
\end{proof}

Next, we will describe the symmetry reduction of the constraints from Proposition~\ref{prop:third}. We start with the constraints following from the matrix~$N$ from~\eqref{eq:Nmatrix}. Recall that the matrix~$N$ satisfies (i), (ii), (iii) from Proposition~\ref{prop:third}. We aim to block-diagonalize the matrix given in (i) of Proposition~\ref{prop:third}. To this end, we first define the following numbers. Given two words~$\uu,\vv \in \mathbb{E}$, with~$\overline{d}(\uu,\vv)=(i,j,t)$, we denote by $\eta_{(i',j',t'),d}^{(i,j,t)}$ the number of words~$\ww \in \mathbb{E}$ with weight~$d$, and~$\overline{d}(\uu-\ww,\vv-\ww)=(i',j',t')$. Note that necessarily $i+j-2t=i'+j'-2t'$. 
\begin{lemma}\label{lem:etabinary}
 The numbers~$\eta_{(i',j',t'),d}^{(i,j,t)}$ satisfy
\begin{align}
\eta_{(i',j',t'),d}^{(i,j,t)} =  \sum_{a_{00},a_{01},a_{10},a_{11}} \binom{i-t}{a_{10}} \binom{j-t}{a_{01}} \binom{t}{a_{11}}\binom{n+t-i-j}{a_{00}},
\end{align}
where the indices~$a_{00}$, $a_{01}$, $a_{10}$ and~$a_{11}$ range over all nonnegative integers for which
\begin{align}
a_{00} &\leq n+t-i-j, \quad a_{01} \leq j-t, \quad a_{10} \leq i-t, \quad a_{11} \leq t, \\ 
  d&= a_{00}+a_{01}+a_{10}+a_{11},\notag\\
i'&= i+a_{00}-a_{11}-a_{10}+a_{01},\notag\\
j'&= j+a_{00}-a_{11}+a_{10}-a_{01}.\notag
\end{align}
\end{lemma}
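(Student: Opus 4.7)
The plan is to partition the coordinate set $\{1,\ldots,n\}$ according to the joint pattern of $(\uu_\ell,\vv_\ell)$ and count choices of $\ww$ type-by-type. Since $\uu,\vv\in\{0,1\}^n$ and $\overline{d}(\uu,\vv)=(i,j,t)$, the $n$ positions split into four disjoint classes
\[
P_{11}=\{\ell:\uu_\ell=\vv_\ell=1\},\quad P_{10}=\{\ell:\uu_\ell=1,\vv_\ell=0\},\quad P_{01}=\{\ell:\uu_\ell=0,\vv_\ell=1\},\quad P_{00}=\{\ell:\uu_\ell=\vv_\ell=0\}
\]
of sizes $t$, $i-t$, $j-t$, and $n+t-i-j$ respectively. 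For any candidate $\ww$, I would let $a_{\alpha\beta}$ denote the number of positions $\ell\in P_{\alpha\beta}$ for which $\ww_\ell=1$; then $\mathrm{wt}(\ww)=a_{00}+a_{01}+a_{10}+a_{11}=d$, which is exactly the first constraint in the lemma.

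Next, I would translate $\overline{d}(\uu-\ww,\vv-\ww)=(i',j',t')$ into conditions on the $a_{\alpha\beta}$. Using $\uu-\ww=\uu\oplus\ww$ in $\F_2^n$, a position $\ell\in P_{\alpha\beta}$ contributes $1$ to $\mathrm{wt}(\uu\oplus\ww)$ precisely when $\uu_\ell\neq\ww_\ell$. Splitting by class, this gives contributions $a_{00}$, $a_{01}$, $(i-t)-a_{10}$, $t-a_{11}$, and summing yields
\[
i'=i+a_{00}+a_{01}-a_{10}-a_{11},
\]
matching the statement. The analogous bookkeeping on $\vv\oplus\ww$ gives $j'=j+a_{00}-a_{01}+a_{10}-a_{11}$. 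One checks that $t'$ is then forced by $i'+j'-2t'=i+j-2t$, so no additional equation is needed.

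Finally I would observe that, once $(a_{00},a_{01},a_{10},a_{11})$ is fixed, the choice of $\ww$ decouples across the four classes: within $P_{\alpha\beta}$ we choose $a_{\alpha\beta}$ out of $|P_{\alpha\beta}|$ positions to set to $1$, independently of the others. This yields exactly
\[
\binom{n+t-i-j}{a_{00}}\binom{j-t}{a_{01}}\binom{i-t}{a_{10}}\binom{t}{a_{11}}
\]
words $\ww$, and summing over all nonnegative $(a_{00},a_{01},a_{10},a_{11})$ satisfying the stated linear conditions (the size constraints being automatic from the binomials vanishing) gives the claimed formula for $\eta_{(i',j',t'),d}^{(i,j,t)}$.

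There is no real obstacle here; the only point requiring care is the sign bookkeeping in expressing $i'$ and $j'$ in terms of the $a_{\alpha\beta}$, which is resolved by recognising that on classes $P_{10}$ and $P_{11}$ (where $\uu_\ell=1$) the contribution to $\mathrm{wt}(\uu\oplus\ww)$ comes from $\ww_\ell=0$, hence involves $|P_{\alpha\beta}|-a_{\alpha\beta}$ rather than $a_{\alpha\beta}$.
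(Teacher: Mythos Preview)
Your proof is correct and follows the same approach as the paper: partition the coordinates according to the joint pattern of $(\uu_\ell,\vv_\ell)$, let $a_{\alpha\beta}$ count the positions of each type lying in the support of $\ww$, and read off the constraints and the multinomial count. In fact your write-up is more detailed than the paper's, which simply names the sets $A_{\alpha\beta}=\{\ell\in\supp(\ww)\mid \uu_\ell=\alpha,\ \vv_\ell=\beta\}$ and asserts that the result follows by summing over all choices.
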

\begin{proof}
Partition the support of $\ww$ into sets $A_{00}$, $A_{01}$, $A_{10}$ and $A_{11}$, where for $\alpha,\beta\in \{0,1\}$ the set $A_{\alpha\beta}$ is given by
\[
A_{\alpha\beta}=\{\ell\in\supp(\ww )\mid u_\ell=\alpha,\ v_\ell=\beta\}.
\]
Denoting $a_{\alpha\beta}=|A_{\alpha\beta}|$, the result follows by summing over all possible subsets $A_{00},A_{01},A_{10},A_{11}$.  
\end{proof}

Applying the block-diagonalization of~$\mathcal{A}_{2,n}$ from Section~\ref{sec:terwilligerbinary} to the matrix (i) of Proposition~\ref{prop:third}, i.e., to the matrix $R(c,N)$, where $c=\sum_{\ell=0}^n \lambda_{\ell}\cdot|S_{\ell}(\zero)|\cdot M'_{\zero,\zero}
 - \beta$, gives the following result.

\begin{proposition}[Lasserre constraint]\label{prop:thirdlasserrebinary}
If~$C$ satisfies the inequality $(\lambda_0,\ldots,\lambda_n)\beta$, then the 
   following matrices are positive semidefinite:
\begin{align}\label{constraints:lasserre}
&\left(\sum_{t=0}^n \beta_{i,j,k}^t \left(\left(\sum_{d=0}^n\sum_{i',j',t'}\lambda_d \eta_{(i',j',t'),d}^{(i,j,t)} x_{i',j' }^{t'}\right) - \beta x^0_{i+j-2t,0} \right)\right)_{i,j=k}^{n-k} \succeq 0 \, & \quad \text{for each $k=1,\ldots, \lfloor\tfrac{n}{2}\rfloor$}, \\
&\begin{pmatrix} \sum_{i=0}^n \binom{n}{i}\lambda_i x_{0,0}^0 -\beta & y\T \\ y & T \end{pmatrix} \succeq 0, \notag
\end{align}
where 
\begin{align*}
    T=\left(\sum_{t=0}^n \beta_{i,j,0}^t \left(\left(\sum_{d=0}^n\sum_{i',j',t'}\lambda_d \eta_{(i',j',t'),d}^{(i,j,t)} x_{i',j' }^{t'}\right) - \beta x^0_{i+j-2t,0} \right)\right)_{i,j=0}^{n}, \\\text{and } y_i:=\binom{n}{i}\left(\left(\sum_{d=0}^n\sum_{i',j',t'}\lambda_d \eta_{(i',j',t'),d}^{(i,i,i)} x_{i',j' }^{t'}\right) - \beta x^0_{0,0} \right), \quad \text{for $i=0,\ldots,n$}.
\end{align*}
\end{proposition}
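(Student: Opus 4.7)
The plan is to apply the block-diagonalization of Proposition~\ref{prop:RMpsdbinary} to the bordered matrix $R(c,N)$, which is positive semidefinite by Proposition~\ref{prop:third}(i). Here
\[
c=\sum_{\ell=0}^n\lambda_\ell\,|S(\zero,\ell)|\cdot M'_{\zero,\zero}-\beta=\sum_{i=0}^n\tbinom{n}{i}\lambda_i\,x_{0,0}^0-\beta,
\]
which is nonnegative because the inequality $(\lambda_0,\ldots,\lambda_n)\beta$ is valid for every $\sigma C$, so every weight $\sum_\ell\lambda_\ell|\sigma C\cap S(\zero,\ell)|-\beta$ entering the averaged sum defining $c$ is nonnegative.

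First I would determine the expansion of $N$ in the basis $\{M_{i,j}^t\mid (i,j,t)\in I(2,n)\}$ of $\mathcal{A}_{2,n}$. By Proposition~\ref{prop:third}(ii) the matrix $N$ is $\AutZ(2,n)$-invariant and therefore lies in $\mathcal{A}_{2,n}$. Using Proposition~\ref{prop:third}(iii) together with Lemma~\ref{lem:matricesbinary} (which gives $M_{\uu,\vv}=x^0_{i+j-2t,0}$ whenever $\overline{d}(\uu,\vv)=(i,j,t)$) and the expansion $M'=\sum x_{i',j'}^{t'}M_{i',j'}^{t'}$ from~\eqref{eq:Mprimebinary}, the value of $N_{\uu,\vv}$ depends only on $\overline{d}(\uu,\vv)$. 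Grouping the inner sum over $\ww\in S(\zero,d)$ according to the resulting triple $\overline{d}(\uu-\ww,\vv-\ww)=(i',j',t')$ and invoking Lemma~\ref{lem:etabinary} to count, for each $(i',j',t')$ and $d$, the number of such $\ww$, yields
\[
n_{i,j}^t\;:=\;\sum_{d=0}^n\sum_{i',j',t'}\lambda_d\,\eta^{(i,j,t)}_{(i',j',t'),d}\,x_{i',j'}^{t'}-\beta\,x^0_{i+j-2t,0},
\]
so that $N=\sum_{(i,j,t)\in I(2,n)}n_{i,j}^t\,M_{i,j}^t$.

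Next I would apply Proposition~\ref{prop:RMpsdbinary} to $R(c,N)$. For each $k=1,\ldots,\lfloor n/2\rfloor$ this produces
\[
\Bigl(\tbinom{n-2k}{i-k}^{-1/2}\tbinom{n-2k}{j-k}^{-1/2}\sum_t\beta_{i,j,k}^t\,n_{i,j}^t\Bigr)_{i,j=k}^{n-k}\succeq 0;
\]
conjugating by the positive diagonal matrix with entries $\tbinom{n-2k}{i-k}^{1/2}$ absorbs the normalization and gives exactly the first family of matrices in the statement. For $k=0$, the same proposition yields a bordered matrix $\begin{pmatrix}c&\tilde y\T\\\tilde y&A_0\end{pmatrix}\succeq 0$ with $A_0=\bigl(\tbinom{n}{i}^{-1/2}\tbinom{n}{j}^{-1/2}\sum_t\beta_{i,j,0}^t\,n_{i,j}^t\bigr)_{i,j=0}^n$ and $\tilde y_i=n_{i,i}^i\tbinom{n}{i}^{1/2}$. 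A further diagonal congruence by $\mathrm{diag}(1,\tbinom{n}{0}^{1/2},\ldots,\tbinom{n}{n}^{1/2})$ converts $A_0$ into the matrix $T$ of the statement and rescales $\tilde y_i$ to $y_i=\tbinom{n}{i}\,n_{i,i}^i$, which coincides with the stated border vector.

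The main obstacle is the combinatorial identification of the coefficients $n_{i,j}^t$ in terms of the numbers $\eta^{(i,j,t)}_{(i',j',t'),d}$ from Lemma~\ref{lem:etabinary}; once that expansion is in place, the remainder is a direct invocation of Proposition~\ref{prop:RMpsdbinary} together with a routine positive-diagonal congruence that strips the square-root normalizations and produces the linear factors $\tbinom{n}{i}$ appearing in the border vector.
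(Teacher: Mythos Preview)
Your proof is correct and follows essentially the same route as the paper's: compute the coefficients of $N$ in the $M_{i,j}^t$-basis via Proposition~\ref{prop:third}(iii), identify the inner sum over $\ww$ through the counts $\eta^{(i,j,t)}_{(i',j',t'),d}$ of Lemma~\ref{lem:etabinary}, and then apply Proposition~\ref{prop:RMpsdbinary} to the bordered matrix $R(c,N)$. The only difference is that you spell out the positive-diagonal congruence that removes the $\tbinom{n-2k}{i-k}^{\pm 1/2}$ normalizations explicitly, whereas the paper absorbs this step by citing Proposition~\ref{prop:RMpsdbinary} together with~\eqref{eq:Mprimesymmetryreduction}; the argument is the same in substance.
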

\begin{proof} 
By Proposition~\ref{prop:third} (i) the matrix $R(c,N)$ is positive semidefinite, where $c=\sum_{\ell=0}^n \lambda_{\ell}\cdot|S_{\ell}(\zero)|\cdot M'_{\zero,\zero} -\beta = \sum_{i=0}^n \binom{n}{i}\lambda_i x_{0,0}^0 -\beta$. We next compute the entries of~$N$ using Proposition~\ref{prop:third} (iii). We have, for~$\uu,\vv \in \{0,1\}^n$, that
\begin{align*}
N_{\uu,\vv}&= -\beta M_{\uu,\vv} + \sum_{d =0}^n \lambda_{d} \sum_{\ww \in S_d(\zero)} M'_{\uu-\ww, \vv-\ww}
\\&= -\beta x_{i+j-2t,0}^0 +  \sum_{d=0}^n \lambda_d\sum_{\ww \in S_d(\bm 0  )} \left(\sum_{i',j',t'} x_{i',j'}^{t'} M_{i',j'}^{t'}  \right)_{\uu-\ww,\vv-\ww} 
 \\&= -\beta x_{i+j-2t,0}^0+ \sum_{d=0}^n \lambda_{d} \sum_{\ww \in S_d(\bm 0 )} \sum_{i',j',t'} x_{i',j'}^{t'}\one_{\overline{d}(\uu-\ww,\vv-\ww)=(i',j',t')} 
 \\&= -\beta x_{i+j-2t,0}^0+ \sum_{d=0}^n \lambda_{d} \sum_{i',j',t'} \left( \sum_{\ww \in S_d(\bm 0 )}  \one_{\overline{d}(\uu-\ww,\vv-\ww)=(i',j',t')} \right) x_{i',j'}^{t'}
 \\&= -\beta x_{i+j-2t,0}^0 + \sum_{d=0}^n\lambda_d \sum_{i',j',t'}\eta_{(i',j',t'),d}^{(i,j,t)} x_{i',j' }^{t'}. 
\end{align*}
Then, Proposition~\ref{prop:RMpsdbinary} together with Equation~\eqref{eq:Mprimesymmetryreduction} yields the block-diagonalization of~$R(c,N)$ as given in~\eqref{constraints:lasserre}.
\end{proof}

It remains to translate the linear constraints from Proposition~\ref{prop:third}~(iv) in terms of the~$x_{i,j}^t$. To do this, we need the following numbers.  Given two words~$\uu$, $\vv \in \E$ with~$\overline{d}(\uu,\vv) = (i,j,t)$, let~$\alpha_{(i,j',t'),d}^{(i,j,t)}$ be the number of words~$\ww \in \E$ with $\overline{d}(\uu,\ww)=(i,j',t')$ and~$d(\vv,\ww) =d$. Note that
 \[
 \alpha_{(i,j',t'),d}^{(i,j,t)}=\eta_{(i+j'-2t',d,t-t'+(j'-j)/2),j'}^{(i,j,t)}
 \]
 (both count the number of extensions of $(\zero,\uu,\vv)$ to a configuration $(\zero,\uu,\vv,\ww)$ in a specified orbit of 4-tuples). 
\begin{lemma}\label{lem:alpha}
 The numbers~$\alpha_{(i,j',t'),d}^{(i,j,t)}$ satisfy
\begin{align}
\alpha_{(i,j',t'),d}^{(i,j,t)} =  \sum_{a_{00},a_{01},a_{10},a_{11}} \binom{i-t}{a_{10}} \binom{j-t}{a_{01}} \binom{t}{a_{11}}\binom{n+t-i-j}{a_{00}},
\end{align}
where the indices~$a_{00}$, $a_{01}$, $a_{10}$ and~$a_{11}$ range over all nonnegative integers for which
\begin{align}
a_{00} &\leq n+t-i-j, \quad a_{01} \leq j-t, \quad a_{10} \leq i-t, \quad a_{11} \leq t,\\ 
j' &= a_{00} + a_{01} + a_{10} + a_{11}\notag\\
t'&= a_{10} + a_{11} \notag\\
d-j &=  a_{00} + a_{10} - a_{01} - a_{11}.\notag 
\end{align}
\end{lemma}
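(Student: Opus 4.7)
The plan is to prove this combinatorial identity by the same four-class partition argument used in the proof of Lemma~\ref{lem:etabinary}, only with a different translation from the partition counts $(a_{00},a_{01},a_{10},a_{11})$ to the output parameters $(j',t',d)$. The hint preceding the lemma offers an alternative route, namely to derive it as a corollary of Lemma~\ref{lem:etabinary} via the substitution $\alpha_{(i,j',t'),d}^{(i,j,t)}=\eta_{(i+j'-2t',d,t-t'+(j'-j)/2),j'}^{(i,j,t)}$, but I would prefer the direct approach because it makes the three linear constraints on the $a_{\alpha\beta}$ transparent.

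Concretely, I would fix $\uu,\vv\in\E$ with $\overline{d}(\uu,\vv)=(i,j,t)$ and partition the set of coordinates $\{1,\ldots,n\}$ into the four classes $A_{\alpha\beta}=\{\ell\mid \uu_\ell=\alpha,\vv_\ell=\beta\}$ for $\alpha,\beta\in\{0,1\}$. By the assumption on $\overline{d}(\uu,\vv)$ these classes have sizes $|A_{00}|=n+t-i-j$, $|A_{01}|=j-t$, $|A_{10}|=i-t$, $|A_{11}|=t$. For any candidate $\ww$, let $a_{\alpha\beta}:=|\supp(\ww)\cap A_{\alpha\beta}|$. Every choice of subsets $\supp(\ww)\cap A_{\alpha\beta}\subseteq A_{\alpha\beta}$ produces a distinct $\ww\in\E$, contributing the product $\binom{n+t-i-j}{a_{00}}\binom{j-t}{a_{01}}\binom{i-t}{a_{10}}\binom{t}{a_{11}}$ to the count.

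The second step is to translate the conditions $\overline{d}(\uu,\ww)=(i,j',t')$ and $d(\vv,\ww)=d$ into linear equations on the $a_{\alpha\beta}$. The weight of $\ww$ is $j'=a_{00}+a_{01}+a_{10}+a_{11}$, and $|\supp(\uu)\cap\supp(\ww)|=a_{10}+a_{11}=t'$, yielding the first two equations. The third equation comes from computing $d(\vv,\ww)$ class by class: in $A_{00}$ and $A_{10}$ (where $\vv_\ell=0$) the coordinates of $\ww$ that are $1$ contribute disagreements, giving $a_{00}+a_{10}$; in $A_{01}$ and $A_{11}$ (where $\vv_\ell=1$) the coordinates where $\ww_\ell=0$ contribute, giving $(j-t-a_{01})+(t-a_{11})=j-a_{01}-a_{11}$. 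Summing yields $d=a_{00}+a_{10}+j-a_{01}-a_{11}$, i.e.\ $d-j=a_{00}+a_{10}-a_{01}-a_{11}$, exactly the third constraint in the lemma. Summing the four-fold binomial product over all nonnegative $a_{\alpha\beta}$ satisfying these three equations (and the four size bounds) then produces $\alpha_{(i',j',t'),d}^{(i,j,t)}$.

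I do not expect a real obstacle here: the argument is a direct bookkeeping mirror of Lemma~\ref{lem:etabinary}. The only mildly delicate point is the sign convention in the third equation, which is easy to get wrong if one confuses $\supp(\vv)\setminus\supp(\ww)$ with $\supp(\ww)\setminus\supp(\vv)$; I would explicitly split into the four classes as above to avoid this. As a sanity check, one can verify consistency by computing $i+j'-2t'=a_{00}+a_{01}-a_{10}-a_{11}+i$ (the would-be Hamming distance $d(\uu,\ww)$), which is finite and nonnegative under the stated bounds, confirming that only triples $(i,j',t')$ with $i+j'-2t'\in\{0,\ldots,n\}$ produce nonzero counts.
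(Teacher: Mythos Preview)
Your proposal is correct and follows essentially the same approach as the paper, which simply writes ``Analogous to Lemma~\ref{lem:etabinary}'' and leaves the details implicit. In fact, your write-up is more explicit than the paper's: you spell out the sizes of the four classes $A_{\alpha\beta}$, verify the three linear constraints $j'=\sum a_{\alpha\beta}$, $t'=a_{10}+a_{11}$, and $d-j=a_{00}+a_{10}-a_{01}-a_{11}$ one by one, and note the sanity check on $d(\uu,\ww)$---all of which is exactly the intended argument.
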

\begin{proof}
Analogous to Lemma~\ref{lem:etabinary}. 
\end{proof}

\begin{lemma}\label{lem:lambdas}
Let $\uu \in \E$ with $d(\uu,\zero)=i$ and let $x \in \mathbb{R}^{\E}$ be such that $x_{\vv}$ is determined by $\overline{d}(\uu ,\vv)$, say $x_{\vv}=x_{i,j}^{t}$, in case $\overline{d}(\uu ,\vv)=(i,j,t)$. Then 
\[
\sum_{d=0}^n \lambda_d x(S_d( \vv ))\geq \beta \quad \text{for all $\vv \in\E$} \quad \Longleftrightarrow \quad
\sum_{j',t'} x_{i,j'}^{t'}\cdot \sum_{d=0}^n \lambda_d \alpha_{(i,j',t'),d}^{(i,j,t)} \geq \beta \quad \text{for all $j,t$},
\]
where $x(S_d( \vv )) := \sum_{\ww \in S_d(\vv)} x_{\ww}$.
\end{lemma}
\begin{proof}
Let $\vv \in \E$ and $\overline{d}(\uu,\vv) = (i,j,t)$. Then we have 
\begin{align*}
\sum_{d=0}^n \lambda_d x(S_d(\vv)) & = \sum_{d=0}^n \lambda_d \sum_{\substack{\ww \in\E \\ d(\vv,\ww) = d}} x_{\ww} = \sum_{d=0}^n \lambda_d \sum_{j',t'} \sum_{\substack{\ww \in \E\\d(\vv, \ww)=d \\ d(\uu, \ww)=(i,j',t')}}x_{\ww} \\
&= \sum_{d=0}^n \lambda_d\sum_{j',t'} \alpha_{(i,j',t'),d}^{(i,j,t)} x_{i,j'}^{t'}= \sum_{j',t'} x_{i,j'}^{t'} \cdot \sum_{d=0}^n \lambda_d \alpha_{(i,j',t'),d}^{(i,j,t)}. \qedhere
\end{align*}
\end{proof}

Now we can rewrite the linear constraints from Proposition~\ref{prop:third} (iv) in terms of the~$x_{i,j}^t$. 

\begin{proposition}[Matrix cut inequalities]\label{prop:thirdmatrixcutbinary} If~$C$ satisfies the inequality $(\lambda_0,\ldots,\lambda_n)\beta$, then for every tuple~$(i,j,t) \in I(2,n)$ the following inequalities hold:
\begin{align} \label{constraints:matrixcut}
    \sum_{j',t'} x_{i,j'}^{t'} \cdot \lambda_{j',t'}^{i,j,t} & \geq x_{i,0}^0 \beta, \\
    \sum_{j',t'} (x_{j',0}^0 - x_{i,j'}^{t'}) \cdot \lambda_{j',t'}^{i,j,t} & \geq (x_{0,0}^0 -  x_{i,0}^0) \beta, \notag\\
    \sum_{j',t'} (x_{i+j'-2t',0}^0 - x_{i,j'}^{t'} )\cdot \lambda_{j',t'}^{i,j,t} & \geq (x_{0,0}^0 -  x_{i,0}^0) \beta,\notag\\
    \sum_{j',t'} (x_{0,0}^0 - x_{j',0}^0 -x_{i+j'-2t',0}^0 + x_{i,j'}^{t'} )\cdot \lambda_{j',t'}^{i,j,t} & \geq (1-2x_{0,0}^0 +  x_{i,0}^0) \beta, \notag
\end{align}
where the numbers~$\lambda_{j',t'}^{i,j,t}$ are given by
\[
\lambda_{j',t'}^{i,j,t} := \sum_{d=0}^n \lambda_d \alpha_{(i,j',t'),d}^{(i,j,t)}.
\]
\end{proposition}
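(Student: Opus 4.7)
The plan is to derive each inequality by the matrix-cut averaging argument underpinning Proposition~\ref{prop:third}(iv), applied to the four nonnegative products
\[
\one_{\sigma C}(\uu)\one_{\sigma C}(\zero),\ (1-\one_{\sigma C}(\uu))\one_{\sigma C}(\zero),\ \one_{\sigma C}(\uu)(1-\one_{\sigma C}(\zero)),\ (1-\one_{\sigma C}(\uu))(1-\one_{\sigma C}(\zero)),
\]
and then to invoke Lemma~\ref{lem:lambdas} to pass from the ``for all $\vv\in\E$'' form to the ``for all $(j,t)$'' form.

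First I would fix $\uu\in\E$ of weight $i$. For each of the four weights, I would multiply the sphere covering inequality $\sum_\ell\lambda_\ell|\sigma C\cap S(\vv,\ell)|\geq\beta$ (which holds for every $\sigma\in\Aut(q,n)$ and every $\vv\in\E$) by this nonnegative weight, sum over $\sigma\in\Aut(q,n)$, and divide by $|\Aut(q,n)|$. Since each factor in the product is $0$ or $1$, the resulting average is still nonnegative, and writing out the averages via the definitions of $M, M', M''$ produces four $\vv$-pointwise inequalities of the common form
\[
\sum_\ell\lambda_\ell\sum_{\ww\in S(\vv,\ell)}f(\ww)\ \geq\ \beta\,c,
\]
where the four choices of $f(\ww)$ are $M'_{\uu,\ww}$, $M'_{\zero,\ww}-M'_{\uu,\ww}$, $M''_{\uu,\ww}$, and $M_{\zero,\zero}-M_{\zero,\ww}-M_{\uu,\ww}+M'_{\uu,\ww}$, and the corresponding constants $c$ are $M'_{\uu,\uu}$, $M'_{\zero,\zero}-M'_{\zero,\uu}$, $M''_{\uu,\uu}$, and $1-M_{\uu,\uu}-M_{\zero,\zero}+M'_{\uu,\uu}$.

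Next I would translate into the $x$-variables using Lemma~\ref{lem:matricesbinary}: when $\overline d(\uu,\ww)=(i,j',t')$, we have $M_{\uu,\ww}=x_{i+j'-2t',0}^0$, $M'_{\uu,\ww}=x_{i,j'}^{t'}$, and $M''_{\uu,\ww}=x_{i+j'-2t',0}^0-x_{i,j'}^{t'}$, while the diagonal identities $M_{\uu,\uu}=M_{\zero,\zero}=x_{0,0}^0$, $M'_{\uu,\uu}=M'_{\zero,\uu}=x_{i,0}^0$ (the last equality via Proposition~\ref{prop:firstbinary}(iii), since $(i,i,0)$ is a permutation of $(i,0,i)$), and $M''_{\uu,\uu}=M_{\uu,\uu}-M'_{\uu,\uu}=x_{0,0}^0-x_{i,0}^0$ collapse the four constants $c$ to $x_{i,0}^0$, $x_{0,0}^0-x_{i,0}^0$, $x_{0,0}^0-x_{i,0}^0$, and $1-2x_{0,0}^0+x_{i,0}^0$, respectively. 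Each resulting $f$ depends on $\ww$ only through $\overline d(\uu,\ww)$, so the hypothesis of Lemma~\ref{lem:lambdas} is satisfied; applying that lemma converts the ``for all $\vv$'' statement into the ``for all $(j,t)$'' statement displayed in the proposition.

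The main bookkeeping is in the fourth case, where all three diagonal identities above are needed to reduce the constant $1-M_{\uu,\uu}-M_{\zero,\zero}+M'_{\uu,\uu}$ to $1-2x_{0,0}^0+x_{i,0}^0$; apart from this, the proof is a mechanical application of Lemmas~\ref{lem:matricesbinary} and~\ref{lem:lambdas}.
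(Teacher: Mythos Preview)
Your proposal is correct and follows essentially the same approach as the paper: the paper's proof simply cites Proposition~\ref{prop:third}(iv) together with Lemma~\ref{lem:lambdas}, and your four nonnegative weights $\one_{\sigma C}(\uu)^{\epsilon_1}\one_{\sigma C}(\zero)^{\epsilon_2}$ are precisely the mechanism underlying Proposition~\ref{prop:third}(iv), made explicit and tailored so that Lemma~\ref{lem:lambdas} applies directly. One minor slip in wording: what you call ``the sphere covering inequality'' is in fact the general $(\lambda_0,\ldots,\lambda_n)\beta$ inequality assumed in the hypothesis.
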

\begin{proof} 
These are the constraints from Proposition~\ref{prop:third} (iv) written in terms of the~$x_{i,j}^t$, making use of Lemma~\ref{lem:lambdas}. 
\end{proof}

The following theorem makes the semidefinite programming lower bound from Theorem~\ref{thm:originalbound} effective in case~$q=2$, by using the results of Propositions~\ref{prop:firstbinary},~\ref{prop:secondbinary},~\ref{prop:thirdlasserrebinary}, and~\ref{prop:thirdmatrixcutbinary}.

\begin{theorem} \label{thm:sdplowerbound}
If every code~$C \subseteq \mathbb{E}$ with covering radius~$r$ satisfies $(\lambda_0,\lambda_1,\ldots,\lambda_n)\beta$, then we have
\begin{align}\label{eq:objectivebinary}
    K_2(n,r)^3 \geq \min_x 2^n\sum_{(i,j,t) \in I(2,n)} \binom{n}{i-t,j-t,t}x_{i,j}^{t}, 
\end{align}
where the~$x_{i,j}^t$ are real numbers that satisfy the constraints in Propositions~\ref{prop:firstbinary},~\ref{prop:secondbinary},~\ref{prop:thirdlasserrebinary}, and \ref{prop:thirdmatrixcutbinary}. 
\end{theorem}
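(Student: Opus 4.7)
The plan is to view Theorem~\ref{thm:sdplowerbound} as the symmetry-reduced specialization of Theorem~\ref{thm:originalbound} for $q=2$, so that essentially all the work has already been carried out in Propositions~\ref{prop:firstbinary}--\ref{prop:thirdmatrixcutbinary}. I would organize the argument in three steps: parametrize the feasible matrices, rewrite the objective, and match the constraints.

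First, recall from Theorem~\ref{thm:originalbound} that $K_2(n,r)^3 \geq \min_{M',M'',N} 2^n \sum_{\uu,\vv \in \E} M'_{\uu,\vv}$, where the minimum is over real matrices satisfying Propositions~\ref{prop:first}, \ref{prop:second}, and~\ref{prop:third}. By Proposition~\ref{prop:first} the matrix $M'$ is $\AutZ(2,n)$-invariant, hence lies in $\mathcal{A}_{2,n}$, so I can write $M' = \sum_{(i,j,t)\in I(2,n)} x_{i,j}^t M_{i,j}^t$ for uniquely determined real coefficients $x_{i,j}^t$, as in~\eqref{eq:Mprimebinary}. Lemma~\ref{lem:matricesbinary} then expresses $M$ and $M''$ in terms of these same coefficients, and Proposition~\ref{prop:third}(iii) expresses $N$ as an explicit linear form in the $x_{i,j}^t$. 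Thus every feasible triple $(M',M'',N)$ in Theorem~\ref{thm:originalbound} corresponds to a choice of real numbers $x_{i,j}^t$, and conversely any $x_{i,j}^t$ yields such a triple via these formulas.

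Next I would rewrite the objective. Since the matrices $M_{i,j}^t$ are pairwise orthogonal $0/1$-matrices, $\sum_{\uu,\vv} M'_{\uu,\vv} = \sum_{(i,j,t)} x_{i,j}^t \cdot |\{(\uu,\vv)\in\E^2 : \overline{d}(\uu,\vv)=(i,j,t)\}|$. Counting such pairs by partitioning the $n$ coordinates according to the four possible values of $(u_\ell,v_\ell) \in \{0,1\}^2$ gives the multinomial coefficient $\binom{n}{i-t,\,j-t,\,t,\,n-i-j+t} = \binom{n}{i-t,j-t,t}$. Multiplying by $2^n$ yields the objective in~\eqref{eq:objectivebinary}.

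Finally, it remains to check that the constraints on $(M',M'',N)$ in Propositions~\ref{prop:first}, \ref{prop:second}, and \ref{prop:third} are equivalent to the constraints on the $x_{i,j}^t$ listed in Propositions~\ref{prop:firstbinary}--\ref{prop:thirdmatrixcutbinary}. Proposition~\ref{prop:firstbinary} is precisely the translation of Proposition~\ref{prop:first}(ii)--(iv) via Lemma~\ref{lem:matricesbinary}. Proposition~\ref{prop:secondbinary} translates the two PSD conditions of Proposition~\ref{prop:second}: the PSD conditions on $M'$ and $M''$ follow from the $*$-isomorphism~\eqref{eq:Mprimesymmetryreduction} applied to $M'$ and to $M'' = \sum (x^0_{i+j-2t,0}-x_{i,j}^t) M_{i,j}^t$, while $R(1-M'_{\zero,\zero},M'')\succeq 0$ follows from Proposition~\ref{prop:RMpsdbinary} (here one computes the border vector using $\diag M'' = \sum_i (x_{0,0}^0 - x_{i,0}^0)\one_{S_i(\zero)}$). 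Proposition~\ref{prop:thirdlasserrebinary} is the analogous reduction of the Lasserre-type PSD constraint $R(c,N) \succeq 0$ from Proposition~\ref{prop:third}(i), again using Proposition~\ref{prop:RMpsdbinary} together with the computation of $N_{\uu,\vv}$ in terms of the $\eta^{(i,j,t)}_{(i',j',t'),d}$. Proposition~\ref{prop:thirdmatrixcutbinary} encodes Proposition~\ref{prop:third}(iv) through Lemma~\ref{lem:lambdas}. Assembling these equivalences shows that the minimum in Theorem~\ref{thm:originalbound} equals the right-hand side of~\eqref{eq:objectivebinary}, which completes the proof.

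The only non-trivial obstacle is bookkeeping: making sure that the border vector and the scalar $c$ in each $R(\cdot,\cdot)\succeq 0$ constraint are translated correctly under $\phi$ via Proposition~\ref{prop:RMpsdbinary}, and that the linear forms appearing in the Lasserre block arise from substituting Proposition~\ref{prop:third}(iii) into the entries of $N$ before applying $\phi$. Once these identifications are made, the theorem follows by direct assembly.
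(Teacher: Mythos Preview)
Your proposal is correct and follows essentially the same approach as the paper: the paper's own proof is a two-line argument noting that \eqref{eq:objectivebinary} is just the objective of Theorem~\ref{thm:originalbound} rewritten in the coordinates $x_{i,j}^t$, and that the constraints in Propositions~\ref{prop:firstbinary}--\ref{prop:thirdmatrixcutbinary} have already been shown to hold. Your write-up spells out the same logic in more detail---parametrizing by $x_{i,j}^t$, computing $\sum_{\uu,\vv}M'_{\uu,\vv}$ via the multinomial count, and matching each constraint block---which is fine, though note that for the inequality you only need the ``forward'' direction (every code yields feasible $x_{i,j}^t$), not the full equivalence of feasible sets that you assert at the end.
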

\begin{proof} 
Equation~$\eqref{eq:objectivebinary}$ is the objective function from Theorem~\ref{thm:originalbound} written in terms of the~$x_{i,j}^t$. The validity of the constraints from Propositions~\ref{prop:firstbinary},~\ref{prop:secondbinary},~\ref{prop:thirdlasserrebinary}, and~\ref{prop:thirdmatrixcutbinary} was already proven.   
\end{proof} 

By~\eqref{dimA2n}, both the number of variables of the reduced SDP and the sum of squares of its block sizes are $O(n^3)$. 

\subsection{The SDP bound for nonbinary covering codes\label{sec:nonbinary}}

In this section, we address the nonbinary case, that is, $q \geq 3$. The overall strategy is the same as in the binary case discussed in the previous section, so the arguments are analogous. However, the formulas involve additional terms and variables, making them more cumbersome to write out explicitly. To avoid redundancy, we keep proofs brief.

The matrices~$M'$,~$M''$ and~$N$ from Section~\ref{sec:generalsdp} are contained in the algebra~$\mathcal{A}_{q,n}$. So we can write
\begin{align}\label{eq:Mprimeqary}
M'= \sum_{(i,j,t,p) \in I(q,n)} x_{i,j}^{t,p} M_{i,j}^{t,p} 
\end{align}
for real numbers $x_{i,j}^{t,p}$.
\begin{lemma}\label{lem:matricesqary}
We have 
\[
M=\sum_{i,j,t,p} x_{i+j-t-p,0}^{0,0} M_{i,j}^{t,p},\qquad M'' =  \sum_{i,j,t,p} (x_{i+j-t-p,0}^{0,0} - x_{i,j}^{t,p}) M_{i,j}^{t,p}.
\]
\end{lemma}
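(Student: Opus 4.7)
The plan is to mirror the binary-case proof of Lemma 4.3 step by step, tracking the additional parameter $p$ that records agreement among shared nonzero coordinates.

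First I would establish the combinatorial identity $d(\uu,\vv) = i+j-t-p$ whenever $\overline{d}(\uu,\vv) = (i,j,t,p)$. The cleanest way is to partition the $n$ coordinates into the five natural types given by the pair $(u_\ell, v_\ell)$: both zero; $u_\ell$ nonzero and $v_\ell=0$; $u_\ell=0$ and $v_\ell$ nonzero; both nonzero and equal; both nonzero and unequal. Their sizes read off as $n-i-j+t$, $i-t$, $j-t$, $p$, and $t-p$ respectively. The Hamming distance counts exactly the three classes on which the coordinates disagree, giving $d(\uu,\vv) = (i-t)+(j-t)+(t-p) = i+j-t-p$.

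Next I would invoke Proposition~\ref{prop:first}(i), which gives $M_{\uu,\vv} = M'_{\zero,\vv-\uu}$. Writing $\ww := \vv-\uu$, the word $\ww$ has weight $d := i+j-t-p$, so $\overline{d}(\zero,\ww) = (0,d,0,0)$ and hence $M'_{\zero,\ww} = x_{0,d}^{0,0}$. To match the convention in the lemma statement, I would apply Proposition~\ref{prop:first}(iv) to the set $\{\zero,\ww\}$ (equivalently, swap the two arguments): since $M'$ depends only on the $\Aut(q,n)$-orbit of $\{\zero,\uu',\vv'\}$, we have $x_{0,d}^{0,0} = M'_{\zero,\ww} = M'_{\ww,\zero} = x_{d,0}^{0,0}$. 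This yields the first equation. The second equation then follows by combining the first with the $M''$ formula from Proposition~\ref{prop:first}(i):
\[
M''_{\uu,\vv} = M'_{\zero,\vv-\uu} - M'_{\uu,\vv} = x_{i+j-t-p,0}^{0,0} - x_{i,j}^{t,p}.
\]

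I do not expect any real obstacle; the argument is a bookkeeping extension of the binary case. The only point demanding a little care is the invocation of the orbit-symmetry in Proposition~\ref{prop:first}(iv) to replace $x_{0,d}^{0,0}$ by $x_{d,0}^{0,0}$, so that the coefficient exhibited in the lemma matches the one that naturally arises from the expansion of $M'_{\zero,\ww}$.
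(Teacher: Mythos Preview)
Your proposal is correct and follows essentially the same approach as the paper's proof. The paper's argument is a two-line version of yours: it asserts $d(\uu,\vv)=i+j-t-p$ without the coordinate partition, and it jumps directly from Proposition~\ref{prop:first}(i) to $M_{\uu,\vv}=x_{i+j-t-p,0}^{0,0}$ without pausing at the intermediate value $x_{0,d}^{0,0}$ or explicitly invoking the symmetry (iv) to swap the indices; your added detail on both points is harmless and, if anything, makes the implicit step in the paper transparent.
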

\begin{proof}
Note that if $\uu,\vv\in \E$ with $\overline{d}(\uu,\vv)=(i,j,t,p)$, then $d(\uu,\vv)=i+j-t-p$. So $M_{\uu,\vv} = x_{i+j-t-p,0}^{0,0}$ and $M''_{\uu,\vv} = x_{i+j-t-p,0}^{0,0} - x_{i,j}^{t,p}$ by Proposition~\ref{prop:first} (i). 
\end{proof}

\begin{remark}[Interpretation of the $x_{i,j}^{t,p}$]
Similar to the binary case, the coefficients $x_{i,j}^{t,p}$ provide insight into the structure of the code~$C$ by extending the concept of distance distribution in Delsarte's linear programming approach to triples.  The $x_{i,j}^{t,p}$ count the number of \emph{triples} $(\uu, \vv, \ww) \in C^3$ that belong to an equivalence class of $\E^3$ under the group action of $\text{Aut}(q,n)$. 
Define
\[
X_{i,j,t,p} := \{ (\uu, \vv, \ww) \in \E \times \E \times \E \mid \overline{d}( \vv-\uu, \ww-\uu) = (i, j, t,p) \},
\]
for  $(i, j, t,p) \in I(q, n) $. For each $(i, j, t,p) \in I(q, n)$, define the numbers
\[
\lambda_{i,j}^{t,p} := |(C \times C \times C) \cap X_{i,j,t,p}|,
\]
and let
\[
\gamma_{i,j}^{t,p} := |(\{\mathbf{0}\} \times \E \times \E) \cap X_{i,j,t,p}| =(q-1)^{i+j-t} (q-2)^{t-p} \binom{n}{p,t-p, i-t, j-t}
\]
be the number of nonzero entries of $ M_{i,j}^{t,p}$. Then the coefficients $x_{i,j}^{t,p}$ are given by
\begin{align}\label{eq:coefficientsxijtp}
x_{i,j}^{t,p} = q^{-n} (\gamma_{i,j}^{t,p})^{-1} \lambda_{i,j}^{t,p}.
\end{align}
Equation~\eqref{eq:coefficientsxijtp} can be seen with an analogous computation as was done to show~\eqref{eq:coefficientsxijt} for the binary case. To prevent repetition, we omit the details.
\end{remark}

\begin{proposition}[Basic inequalities and symmetry]\label{prop:firstqary}
    The $x_{i,j}^{t,p}$ satisfy 
\begin{align}\label{constraints:qarylinear}
\text{\emph{(i)}}& \quad\quad 0 \leq x_{i,j}^{t,p} \leq x_{i,i}^{i,i}, \\ 
\text{\emph{(ii)}}& \quad\quad x_{i,0}^{0,0} + x^{0,0}_{i+j-t-p,0}-x_{0,0}^{0,0}  \leq x_{i,j}^{t,p}\leq    x^{0,0}_{i+j-t-p,0},\notag \\ 
\text{\emph{(iii)}}&\quad\quad  x_{i,j}^{t,p} = x_{i',j'}^{t',p'} \,\,\,\text{ if $(i,j,i+j-t-p)$ is a permutation of $(i',j',i'+j'-t'-p')$ }\notag\\
& \quad\quad\quad\quad\quad\quad\quad\quad\text{and $t-p=t'-p'$}, \notag 
\end{align}
\end{proposition}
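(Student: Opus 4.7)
The plan is to mirror the short binary proof of Proposition~\ref{prop:firstbinary}: derive each of (i), (ii), (iii) from the corresponding items in Proposition~\ref{prop:first} together with Lemma~\ref{lem:matricesqary}. For a pair $\uu,\vv$ with $\overline{d}(\uu,\vv)=(i,j,t,p)$, the key identifications are $M'_{\uu,\vv}=x_{i,j}^{t,p}$, $M''_{\uu,\vv}=x^{0,0}_{i+j-t-p,0}-x_{i,j}^{t,p}$ from Lemma~\ref{lem:matricesqary}, together with $M'_{\uu,\uu}=x_{i,i}^{i,i}$ since $\overline{d}(\uu,\uu)=(i,i,i,i)$.

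For (i), I would plug these into $0\leq M'_{\uu,\vv}\leq M'_{\zero,\uu}$ from Proposition~\ref{prop:first}(ii). The equality $M'_{\zero,\uu}=M'_{\uu,\uu}=x_{i,i}^{i,i}$ follows from Proposition~\ref{prop:first}(iv), as the unordered sets $\{\zero,\uu\}$ and $\{\zero,\uu,\uu\}$ coincide. For (ii), I would apply $0\leq M''_{\uu,\vv}\leq M''_{\uu,\uu}$ from Proposition~\ref{prop:first}(iii), compute $M''_{\uu,\uu}=M_{\uu,\uu}-M'_{\uu,\uu}=x^{0,0}_{0,0}-x_{i,i}^{i,i}$, and rewrite $x_{i,i}^{i,i}=x^{0,0}_{i,0}$ using (iii) (the tuples $(i,i,i,i)$ and $(i,0,0,0)$ yield the same distance multiset $\{i,i,0\}$ and both satisfy $t-p=0$). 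Rearranging produces the two-sided bound in (ii).

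The main work lies in (iii), which requires an orbit analysis. By Proposition~\ref{prop:first}(iv) it suffices to show that the $\Aut(q,n)$-orbit of the unordered set $\{\zero,\uu,\vv\}$ is determined by the multiset $\{i,j,i+j-t-p\}$ of pairwise distances together with $t-p$. I would classify each coordinate $\ell\in\{1,\ldots,n\}$ according to the equivalence pattern of $(\zero_\ell,\uu_\ell,\vv_\ell)$ into five types: all three equal (count $n-i-j+t$); exactly one of the three points distinct from the other two, with three sub-types forming a multiset of counts $\{p,\,i-t,\,j-t\}$; and all three distinct, necessarily all nonzero (count $t-p$). Since $\Aut(q,n)$ acts transitively on configurations with the same type counts, these five counts determine the orbit. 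Each pairwise distance is $(t-p)$ plus the two "odd-one-out" counts corresponding to coordinates where the two points differ, so knowing $t-p$ and the distance multiset recovers the multiset $\{p,\,i-t,\,j-t\}$ (and then $n-i-j+t$ via the sum $n$), yielding the claim.

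The main obstacle is the orbit analysis in (iii): one must carefully track that the parameter $t-p$ (the count of "all distinct" coordinates) is invariant under relabeling the three points, while $p$, $i-t$, and $j-t$ permute among themselves; this is exactly what distinguishes the nonbinary case from the binary one, where the single parameter $t$ suffices.
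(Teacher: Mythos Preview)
Your proposal is correct and follows the same approach as the paper, which simply cites Proposition~\ref{prop:first}(ii),(iii),(iv) together with Lemma~\ref{lem:matricesqary}. Your expanded orbit analysis for (iii)---classifying coordinates into the five types with counts $n-i-j+t$, $\{i-t,\,j-t,\,p\}$, and $t-p$, and recovering the odd-one-out multiset from the pairwise distances minus $t-p$---is exactly the argument the paper leaves implicit, and it is carried out correctly.
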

\begin{proof} 
This follows directly from Proposition~\ref{prop:first} (ii), (iii), (iv) combined with Lemma~\ref{lem:matricesqary}.
\end{proof}

We now employ the block-diagonalization of~$\mathcal{A}_{q,n}$ from Section~\ref{sec:terwilligerqary} to the matrices of Proposition~\ref{prop:second}.

\begin{proposition}[Semidefiniteness]\label{prop:secondqary}
For all $a,k$ with~$0 \leq a \leq k \leq n+a-k$ and $k \neq 0$ we have
\begin{equation}\label{constraints:qaryterwilliger}
\left(\sum_{t,p=0}^n \alpha(i,j,t,p,a,k) x_{i,j}^{t,p}  \right)_{i,j=k}^{n+a-k} \succeq 0, \quad \left(\sum_{t,p=0}^n \alpha(i,j,t,p,a,k)(x^{0,0}_{i+j-t-p,0}-x_{i,j}^{t,p})  \right)_{i,j=k}^{n+a-k} \succeq 0.
\end{equation}
Moreover, 
\begin{equation}
\left(\sum_{t,p=0}^n \alpha(i,j,t,p,0,0) x_{i,j}^{t,p}  \right)_{i,j=0}^{n} \succeq 0, \quad \begin{pmatrix}1-x_{0,0}^{0,0} & y\T \\ y & L \end{pmatrix} \succeq 0,
\end{equation}
where 
\begin{equation*}
L:= \left(\sum_{t,p=0}^n \alpha(i,j,t,p,0,0)(x^{0,0}_{i+j-t-p,0}-x_{i,j}^{t,p})\right)_{i,j=0}^{n}\quad \text{and}\quad y_i:= (x_{0,0}^{0,0} - x_{i,i}^{i,i})\binom{n}{i} (q-1)^{\tfrac{1}{2}i}.
\end{equation*}
\end{proposition}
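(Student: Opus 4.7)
The approach is to apply the block-diagonalization $\phi$ of the Terwilliger algebra $\mathcal{A}_{q,n}$ from Theorem~\ref{thm:blocknonbin}, together with the bordered-matrix reduction in Proposition~\ref{prop:RMpsdnonbin}, directly to the semidefiniteness conditions listed in Proposition~\ref{prop:second}. By construction $M' = \sum x_{i,j}^{t,p}M_{i,j}^{t,p}$, and by Lemma~\ref{lem:matricesqary} we have $M'' = \sum(x^{0,0}_{i+j-t-p,0}-x_{i,j}^{t,p})M_{i,j}^{t,p}$, so both $M'$ and $M''$ lie in $\mathcal{A}_{q,n}$. Proposition~\ref{prop:second} gives $M',M''\succeq 0$, and since $\phi$ is a $*$-isomorphism, each block of $\phi(M')$ and $\phi(M'')$ is positive semidefinite. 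Reading off these blocks through formula~\eqref{eq:starisononbin} and clearing the diagonal prefactor $\binom{n+a-2k}{i-k}^{-1/2}\binom{n+a-2k}{j-k}^{-1/2}$ by a diagonal congruence (which preserves PSDness) yields precisely the two matrix families in~\eqref{constraints:qaryterwilliger} for every admissible $(a,k)$, including the $M'$ block at $(0,0)$ that appears at the start of the second display.

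For the bordered condition on $M''$ at the $(0,0)$ block, the plan is to invoke the strictly stronger statement of Proposition~\ref{prop:second}, namely $R(1-M'_{\zero,\zero},M'') = R(1-x_{0,0}^{0,0},M'')\succeq 0$, and to push it through Proposition~\ref{prop:RMpsdnonbin}. Since Proposition~\ref{prop:RMpsdnonbin} is stated for $R(M)=R(1,M)$, I will first rescale: $R(1-x_{0,0}^{0,0},M'')\succeq 0$ is equivalent (assuming $1-x_{0,0}^{0,0}>0$, which follows from $|C|\le q^n$; the boundary case is routine) to $R\bigl(\tfrac{1}{1-x_{0,0}^{0,0}}M''\bigr)\succeq 0$, to which Proposition~\ref{prop:RMpsdnonbin} applies and gives PSDness of all non-$(0,0)$ blocks of $\phi(M'')$ (already handled) together with a Schur complement condition on the $(0,0)$ block.

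The only real bookkeeping is identifying the border vector $y$. By Lemma~\ref{lem:matricesqary} specialized to $\uu=\vv$, the diagonal of $M''$ equals $\sum_{i=0}^n(x_{0,0}^{0,0}-x_{i,i}^{i,i})\one_{S_i(\zero)}$, so $(\diag M'')(\diag M'')^{\sf T}$ is a linear combination of the rank-one matrices $\one_{S_i(\zero)}\one_{S_j(\zero)}^{\sf T}$. Lemma~\ref{lem:border} tells me that $\phi$ sends each such rank-one matrix to a matrix supported only on the $(0,0)$ block, with $(i,j)$-entry $(q-1)^{(i+j)/2}\binom{n}{i}^{1/2}\binom{n}{j}^{1/2}\delta_{i,i'}\delta_{j,j'}$. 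Consequently $\phi((\diag M'')(\diag M'')^{\sf T})$ has $(0,0)$-block equal to $\tilde y\tilde y^{\sf T}$ with $\tilde y_i = (q-1)^{i/2}\binom{n}{i}^{1/2}(x_{0,0}^{0,0}-x_{i,i}^{i,i})$. The matrix $L$ in the statement is obtained from the raw $(0,0)$-block $A_{0,0}(M'')$ of $\phi(M'')$ by the congruence $L=DA_{0,0}(M'')D$ with $D=\diag(\binom{n}{i}^{1/2})$; applying the same congruence to $\tilde y$ produces the stated $y_i = \binom{n}{i}(q-1)^{i/2}(x_{0,0}^{0,0}-x_{i,i}^{i,i})$. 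Combining these ingredients via Schur complement yields the bordered PSD condition in the proposition.

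The main obstacle I anticipate is nothing conceptual but rather this tracking of diagonal normalizations: the $L$ used in the statement is a congruent, rather than equal, copy of the $(0,0)$ block of $\phi(M'')$, so one must consistently push the factor $D$ through both sides of the Schur complement to arrive at the clean expression for $y$ without spurious square roots.
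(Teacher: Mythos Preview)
Your proposal is correct and follows essentially the same route as the paper: apply Theorem~\ref{thm:blocknonbin} to $M'$ and $M''$ (expressed via Lemma~\ref{lem:matricesqary}) for the unbordered blocks, and apply Proposition~\ref{prop:RMpsdnonbin} to $R(1-x_{0,0}^{0,0},M'')$ for the bordered $(0,0)$ block. Your explicit treatment of the rescaling $R(c,M)\leftrightarrow R(1,\tfrac{1}{c}M)$ and of the diagonal congruence $D=\diag(\binom{n}{i}^{1/2})$ that turns $A_{0,0}$ into $L$ and $\tilde y$ into the stated $y$ is more detailed than the paper's one-line proof, but it is exactly what is implicitly being used there.
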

\begin{proof} 
This follows from applying the block-diagonalization from Section~\ref{sec:terwilligerqary} to the matrices in Proposition~\ref{prop:second}. Here Theorem~\ref{thm:blocknonbin} directly gives the reduction of the matrix~$M'$, and  Proposition~\ref{prop:RMpsdnonbin} combined with  Theorem~\ref{thm:blocknonbin} gives the reduction of the matrix $R(1-x_{0,0}^{0,0},M'')$.
\end{proof} 

Analogous to the binary case, we next describe the symmetry reduction of the constraints from Proposition~\ref{prop:third}. We start with the constraints following from the matrix~$N$ from~\eqref{eq:Nmatrix}. Recall that the matrix~$N$ satisfies (i), (ii), (iii) from Proposition~\ref{prop:third}. We aim to block-diagonalize the matrix given in (i) of Proposition~\ref{prop:third}. To do this, we define the following numbers.

Given two words~$\uu,\vv \in \mathbb{E}$, with~$\overline{d}(\uu,\vv)=(i,j,t,p)$, we denote by $\eta_{(i',j',t',p'),d}^{(i,j,t,p)}$ the number of words~$\ww \in \mathbb{E}$ with weight~$d$, and~$\overline{d}(\uu-\ww,\vv-\ww)=(i',j',t',p')$. Note that $i+j-t-p = i' + j' -t'-p'$. As in the binary case, the numbers~$\eta_{(i',j',t',p'),d}^{(i,j,t,p)}$ can be explicitly calculated as a sum of expressions including binomial coefficients.

\begin{lemma}\label{lem:etaqary}
 The numbers~$\eta_{(i',j',t',p'),d}^{(i,j,t,p)}$ satisfy
\begin{align}
\eta_{(i',j',t',p'),d}^{(i,j,t,p)} =  \sum \binom{i-t}{a_1,a_2}&\binom{j-t}{b_1,b_2}\binom{p}{c_1,c_2}\binom{t-p}{d_1,d_2,d_3}\binom{n+t-i-j}{e}\notag\\&\cdot(q-1)^e (q-2)^{a_2+b_2+c_2}(q-3)^{d_3},
\end{align}
where the summation is over all nonnegative integers~$a_1$, $a_2$, $b_1$, $b_2$, $c_1$, $c_2$, $d_1$, $d_2$, $d_3$, and $e$ for which
\begin{align}
e &\leq n+t-i-j, \quad b_1+b_2 \leq j-t,\quad a_1+a_2 \leq i-t,\quad d_1+d_2+d_3 \leq t-p,\quad c_1+c_2 \leq p\\
d &= a_1 + a_2 +b_1 +b_2 +c_1 +c_2 +d_1 +d_2 +d_3 + e \notag \\
i'&= i - a_1 + b_1 + b_2 - c_1 - d_1 + e \notag \\
j'&= j + a_1 + a_2 - b_1 - c_1 - d_2 + e \notag \\
t' &= t +a_2 + b_2 -c_1 - d_1 - d_2 + e. \notag 
\end{align}
\end{lemma}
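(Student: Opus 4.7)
The plan is to generalize the binary argument of Lemma~\ref{lem:etabinary}: partition the coordinates of $\uu,\vv$ into five types according to the pair $(u_\ell,v_\ell)$, then further refine each block by the value of $w_\ell$, and finally read off $d$, $i'$, $j'$, $t'$ as linear combinations of the sub-block sizes.

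First, I would split $\{1,\ldots,n\}$ into the following five classes: Type 0 ($u_\ell=v_\ell=0$), of size $n+t-i-j$; Type A ($u_\ell\neq 0,\,v_\ell=0$), of size $i-t$; Type B ($u_\ell=0,\,v_\ell\neq 0$), of size $j-t$; Type C ($u_\ell=v_\ell\neq 0$), of size $p$; and Type D ($u_\ell,v_\ell$ both nonzero, $u_\ell\neq v_\ell$), of size $t-p$. Within each class the choices of $w_\ell$ that lead to distinct effects on $(\uu-\ww,\vv-\ww)$ are: in Type 0, $w_\ell=0$ or $w_\ell\neq 0$ (the latter contributing a factor $(q-1)$ per position); in Type A, $w_\ell=0$, $w_\ell=u_\ell$, or $w_\ell\in[q]\setminus\{0,u_\ell\}$ (factor $(q-2)$); in Type B analogously; in Type C, $w_\ell=0$, $w_\ell=u_\ell$, or $w_\ell\in[q]\setminus\{0,u_\ell\}$; and in Type D the four choices $w_\ell=0$, $w_\ell=u_\ell$, $w_\ell=v_\ell$, $w_\ell\in[q]\setminus\{0,u_\ell,v_\ell\}$ (the last contributing $(q-3)$). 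The multinomial coefficients $\binom{n+t-i-j}{e}$, $\binom{i-t}{a_1,a_2}$, $\binom{j-t}{b_1,b_2}$, $\binom{p}{c_1,c_2}$ and $\binom{t-p}{d_1,d_2,d_3}$ then count the assignments of positions in each class to its sub-classes, while the powers $(q-1)^e(q-2)^{a_2+b_2+c_2}(q-3)^{d_3}$ record the number of independent choices of $w_\ell$ within each sub-class.

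Given such a partition, the weight of $\ww$ equals the total number of positions with $w_\ell\neq 0$, giving $d = e + a_1 + a_2 + b_1 + b_2 + c_1 + c_2 + d_1 + d_2 + d_3$. The support size $i'$ of $\uu-\ww$ tallies the positions at which $w_\ell\neq u_\ell$; summing case by case yields $e + (i-t-a_1) + (b_1+b_2) + (p-c_1) + (t-p-d_1) = e + i - a_1 + b_1 + b_2 - c_1 - d_1$, matching the lemma. The expression for $j'$ follows by the symmetry $\uu\leftrightarrow\vv$, and $t'$ counts positions where simultaneously $w_\ell\neq u_\ell$ and $w_\ell\neq v_\ell$, producing $e + a_2 + b_2 + (p-c_1) + (t-p-d_1-d_2) = e + t + a_2 + b_2 - c_1 - d_1 - d_2$. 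The remaining invariant $p'$ is not free: the Hamming distance identity $d(\uu-\ww,\vv-\ww)=d(\uu,\vv)$, i.e.\ $i'+j'-t'-p' = i+j-t-p$, forces it, and one checks directly that the value $p' = e + p - c_1$ obtained from the Type 0 and Type C contributions is consistent with this identity. Summing over all admissible tuples $(a_1,a_2,b_1,b_2,c_1,c_2,d_1,d_2,d_3,e)$ yields the claimed formula.

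The main obstacle is purely bookkeeping: correctly enumerating the sub-cases in Types C and D, where both $u_\ell$ and $v_\ell$ are nonzero, and verifying that each sub-case has been assigned to the correct one of the four invariants. The essential new ingredient relative to the binary proof is the split of Type D into four sub-blocks together with the factor $(q-3)^{d_3}$, which requires $q\geq 3$ and has no binary analogue; once this is handled cleanly, the remaining computation is a direct generalization of Lemma~\ref{lem:etabinary}.
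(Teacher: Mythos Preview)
Your proof is correct and follows essentially the same approach as the paper: partitioning the support of $\ww$ according to the joint type of $(u_\ell,v_\ell,w_\ell)$ into the sets whose sizes are $a_1,a_2,b_1,b_2,c_1,c_2,d_1,d_2,d_3,e$, and then summing over all admissible size-vectors. Your write-up in fact gives more detail than the paper's, which simply lists the ten sets and asserts that the formula follows; your explicit verification of the expressions for $d,i',j',t'$ (and the consistency check for $p'$) is a welcome addition.
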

\begin{proof}
The equality can be seen by partitioning the support of~$\ww$ into sets $A_1$, $A_2$, $B_1$, $B_2$, $C_1$, $C_2$, $D_1$, $D_2$, $D_3$, and $E$ as follows:
\begin{align}
A_1 &:= \{k \in \supp(\ww) \,\,|\,\, \uu_k \neq 0, \vv_k = 0, \ww_k = \uu_k\}, \\
A_2 &:= \{k \in \supp(\ww) \,\,|\,\, \uu_k \neq 0, \vv_k = 0, \ww_k \neq \uu_k\}, \notag \\
B_1 &:= \{k \in \supp(\ww) \,\,|\,\, \uu_k = 0, \vv_k \neq 0, \ww_k = \vv_k\}, \notag \\
B_2 &:= \{k \in \supp(\ww) \,\,|\,\, \uu_k = 0, \vv_k \neq 0, \ww_k \neq \vv_k\}, \notag \\
C_1 &:= \{k \in \supp(\ww) \,\,|\,\, \uu_k \neq 0, \vv_k = \uu_k, \ww_k = \uu_k\}, \notag \\
C_2 &:= \{k \in \supp(\ww) \,\,|\,\, \uu_k \neq 0, \vv_k = \uu_k, \ww_k \neq \uu_k\}, \notag \\
D_1 &:= \{k \in \supp(\ww) \,\,|\,\, \uu_k \neq 0, \vv_k \neq 0, \ww_k = \uu_k\}, \notag \\
D_2 &:= \{k \in \supp(\ww) \,\,|\,\, \uu_k \neq 0, \vv_k \neq 0, \ww_k = \vv_k\}, \notag \\
D_3 &:= \{k \in \supp(\ww) \,\,|\,\, \uu_k \neq 0, \vv_k \neq 0, \ww_k \neq \uu_k, \ww_k \neq \vv_k\}, \notag \\
E &:= \{k \in \supp(\ww) \,\,|\,\, \uu_k = 0, \vv_k = 0\}. \notag
\end{align}
Denoting the sizes of these sets by $a_1, a_2, b_1, b_2, c_1, c_2, d_1, d_2, d_3$, and $e$, respectively, the proposition follows by summing over all possible sets $A_1$, $A_2$, $B_1$, $B_2$, $C_1$, $C_2$, $D_1$, $D_2$, $D_3$, and $E$.
\end{proof}

\begin{proposition}[Lasserre constraint]\label{prop:thirdlasserreqary}
If~$C$ satisfies the inequality $(\lambda_0,\ldots,\lambda_n)\beta$, then for all $a,k$ with~$0 \leq a \leq k \leq n+a-k$ and $k \neq 0$ we have 
\begin{equation}\label{constraints:lasserreqary}
\left(\sum_{t,p=0}^n \alpha(i,j,t,p,a,k) \left(\left(\sum_{d=0}^n\sum_{i',j',t',p'}\lambda_d \eta_{(i',j',t',p'),d}^{(i,j,t,p)} x_{i',j' }^{t',p'}\right) - \beta x^{0,0}_{i+j-t-p,0} \right)\right)_{i,j=k}^{n+a-k} \succeq 0.
\end{equation}
Moreover, 
\begin{equation}\label{constraints:lasserreqarycont}
\begin{pmatrix} \sum_{i=0}^n \binom{n}{i}(q-1)^i \lambda_i x_{0,0}^{0,0} -\beta & y\T \\ y &  T   \end{pmatrix} \succeq 0,
\end{equation}
where 
\begin{align*}
    T=\left(\sum_{t,p=0}^n \alpha(i,j,t,p,0,0) \left(\left(\sum_{d=0}^n\sum_{i',j',t',p'}\lambda_d \eta_{(i',j',t',p'),d}^{(i,j,t,p)} x_{i',j' }^{t',p'}\right) - \beta x^{0,0}_{i+j-t-p,0} \right)\right)_{i,j=0}^{n}, \\
    y_i:=\binom{n}{i}(q-1)^{\frac{1}{2}i}\left(\left(\sum_{d=0}^n\sum_{i',j',t',p'}\lambda_d \eta_{(i',j',t',p'),d}^{(i,i,i,i)} x_{i',j' }^{t',p'}\right) - \beta x^{0,0}_{0,0} \right), \quad \text{for $i=0,\ldots,n$.}
\end{align*} 
\end{proposition}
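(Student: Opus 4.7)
The plan is to mirror the proof of the binary analog Proposition~\ref{prop:thirdlasserrebinary}, substituting Theorem~\ref{thm:blocknonbin} and Proposition~\ref{prop:RMpsdnonbin} for their binary counterparts, and using Lemma~\ref{lem:etaqary} in place of Lemma~\ref{lem:etabinary}. The starting point is Proposition~\ref{prop:third}(i), which asserts $R(c,N)\succeq 0$ with $c=\sum_{\ell=0}^n\lambda_\ell\,|S(\zero,\ell)|\,M'_{\zero,\zero}-\beta$. Using $|S(\zero,\ell)|=\binom{n}{\ell}(q-1)^\ell$ and $M'_{\zero,\zero}=x_{0,0}^{0,0}$, this constant equals $\sum_{i=0}^n\binom{n}{i}(q-1)^i\lambda_i\,x_{0,0}^{0,0}-\beta$, matching the top-left corner of the bordered matrix in~\eqref{constraints:lasserreqarycont}.

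Next, I would rewrite $N_{\uu,\vv}$ in terms of the $x_{i,j}^{t,p}$ using Proposition~\ref{prop:third}(iii):
\[
N_{\uu,\vv}=-\beta M_{\uu,\vv}+\sum_{d=0}^n\lambda_d\sum_{\ww\in S(\zero,d)}M'_{\uu-\ww,\vv-\ww}.
\]
For $\uu,\vv$ with $\overline{d}(\uu,\vv)=(i,j,t,p)$, Lemma~\ref{lem:matricesqary} gives $M_{\uu,\vv}=x_{i+j-t-p,0}^{0,0}$. For the sphere sum, expansion~\eqref{eq:Mprimeqary} yields $M'_{\uu-\ww,\vv-\ww}=x_{i',j'}^{t',p'}$ whenever $\overline{d}(\uu-\ww,\vv-\ww)=(i',j',t',p')$, since $M'$ is $\AutZ(q,n)$-invariant. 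Grouping the inner sum over $\ww\in S(\zero,d)$ by orbits and using the very definition of $\eta_{(i',j',t',p'),d}^{(i,j,t,p)}$ from Lemma~\ref{lem:etaqary}, I obtain
\[
N_{\uu,\vv}=-\beta\,x_{i+j-t-p,0}^{0,0}+\sum_{d=0}^n\lambda_d\sum_{i',j',t',p'}\eta_{(i',j',t',p'),d}^{(i,j,t,p)}\,x_{i',j'}^{t',p'}.
\]
This confirms that $N\in\mathcal{A}_{q,n}$ (consistent with Proposition~\ref{prop:third}(ii)) and reads off its expansion coefficients in the basis $\{M_{i,j}^{t,p}\}$.

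Finally, I would apply Proposition~\ref{prop:RMpsdnonbin} to $R(c,N)$. The $*$-isomorphism $\phi$ of Theorem~\ref{thm:blocknonbin}, applied to $N$, block-diagonalizes it according to~\eqref{eq:starisononbin}, and substituting the coefficient formula from the previous step produces exactly the entries displayed in~\eqref{constraints:lasserreqary}. By Proposition~\ref{prop:RMpsdnonbin}, semidefiniteness of $R(c,N)$ is equivalent to semidefiniteness of each block with $(a,k)\neq(0,0)$ together with a bordered version of the $(0,0)$ block whose border vector has $i$-th entry proportional to the diagonal coefficient $N_{\uu,\uu}$, i.e.\ the $(i,i,i,i)$-specialization of the formula above, yielding precisely~\eqref{constraints:lasserreqarycont}.

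The only real obstacle is bookkeeping: one must carefully track the factor $(q-1)^{i/2}$ that appears in $\phi$ for the nonbinary scheme (absent in the binary case), identify the diagonal entries $N_{\uu,\uu}$ as the $(i,i,i,i)$-specialization of the coefficient formula, and apply Proposition~\ref{prop:RMpsdnonbin} (via Lemma~\ref{lem:border}) to the shifted matrix so that the border vector is read off $N$ rather than $M'$. All combinatorial content is packaged inside Lemma~\ref{lem:etaqary}, so no further identities need to be established; the argument is otherwise a direct nonbinary translation of the proof of Proposition~\ref{prop:thirdlasserrebinary}.
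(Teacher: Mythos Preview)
Your proposal is correct and follows essentially the same approach as the paper: start from $R(c,N)\succeq 0$ via Proposition~\ref{prop:third}(i), identify $c=\sum_i\binom{n}{i}(q-1)^i\lambda_i\,x_{0,0}^{0,0}-\beta$, compute the coefficients of $N$ in the basis $\{M_{i,j}^{t,p}\}$ using Proposition~\ref{prop:third}(iii) and the definition of the $\eta$-numbers, and then block-diagonalize $R(c,N)$ via Proposition~\ref{prop:RMpsdnonbin} and Theorem~\ref{thm:blocknonbin}. The only nuance you flag---that Proposition~\ref{prop:RMpsdnonbin} is literally stated for $R(A)=R(1,A)$ rather than $R(c,A)$---is handled by the same scaling trick used in the proof of Proposition~\ref{prop:RMpsdbinary}, so no additional argument is needed.
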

\begin{proof} 
By Proposition~\ref{prop:third} (i) the matrix $R(c,N)$ is positive semidefinite, where $c=\sum_{\ell=0}^n \lambda_{\ell}\cdot|S_{\ell}(\zero)|\cdot M'_{\zero,\zero} -\beta = \sum_{i=0}^n \binom{n}{i}(q-1)^i \lambda_i x_{0,0}^{0,0} -\beta$. We next compute the entries of~$N$ using Proposition~\ref{prop:third} (iii). We have, for~$\uu,\vv \in \E$, that
\begin{align*}
N_{\uu,\vv}&= -\beta M_{\uu,\vv} + \sum_{d =0}^n \lambda_{d} \sum_{\ww \in S_d(\zero)} M'_{\uu-\ww, \vv-\ww}
\\&= -\beta x_{i+j-t-p,0}^{0,0} +  \sum_{d=0}^n \lambda_d\sum_{\ww \in S_d(\bm 0)} \left(\sum_{i',j',t',p'} x_{i',j'}^{t',p'} M_{i',j'}^{t',p'}  \right)_{\uu-\ww,\vv-\ww} 
 \\&= -\beta x_{i+j-t-p,0}^{0,0}+ \sum_{d=0}^n \lambda_{d} \sum_{\ww \in S_d(\bm 0)} \sum_{i',j',t',p'} x_{i',j'}^{t',p'}\one_{\overline{d}(\uu-\ww,\vv-\ww)=(i',j',t',p')} 
 \\&= -\beta x_{i+j-t-p,0}^{0,0}+ \sum_{d=0}^n \lambda_{d} \sum_{i',j',t',p'} \left( \sum_{\ww \in S_d(\bm 0 )}  \one_{\overline{d}(\uu-\ww,\vv-\ww)=(i',j',t',p')} \right) x_{i',j'}^{t',p'}
 \\&= -\beta x_{i+j-t-p,0}^{0,0} + \sum_{d=0}^n\lambda_d \sum_{i',j',t',p'}\eta_{(i',j',t',p'),d}^{(i,j,t,p)} x_{i',j' }^{t',p'}. 
\end{align*}
Then, Proposition~\ref{prop:RMpsdnonbin} combined with Theorem~\ref{thm:blocknonbin} yields the block-diagonalization of~$R(c,N)$ as given in~\eqref{constraints:lasserreqary},~\eqref{constraints:lasserreqarycont}.
\end{proof} 
It remains to translate the linear constraints from Proposition~\ref{prop:third}~(iv) in terms of the~$x_{i,j}^{t,p}$. To this end, we define the following numbers. Given two words~$\uu$, $\vv \in \E$ with~$\overline{d}(\uu,\vv) = (i,j,t,p)$, let~$\alpha_{(i,j',t',p'),d}^{(i,j,t,p)}$ denote the number of words~$\ww \in \E$ with $\overline{d}(\uu,\ww)=(i,j',t',p')$ and~$d(\vv,\ww) =d$. As in the binary case, the numbers~$\alpha_{(i,j',t',p'),d}^{(i,j,t,p)}$ can be explicitly calculated as sums of expressions including binomial coefficients.
\begin{lemma}\label{lem:alphaqary}
 The numbers~$\alpha_{(i,j',t',p'),d}^{(i,j,t,p)}$ satisfy
\begin{align}
\alpha_{(i,j',t',p'),d}^{(i,j,t,p)} =  \sum_{\substack{a_1,a_2,\\b_1,b_2,\\c_1,c_2,\\d_1,d_2,d_3,\\e}} \binom{i-t}{a_1,a_2}&\binom{j-t}{b_1,b_2}\binom{p}{c_1,c_2}\binom{t-p}{d_1,d_2,d_3}\binom{n+t-i-j}{e}\notag\\&\cdot(q-1)^e (q-2)^{a_2+b_2+c_2}(q-3)^{d_3},
\end{align}
where~$a_1$, $a_2$, $b_1$, $b_2$, $c_1$, $c_2$, $d_1$, $d_2$, $d_3$, and $e$ range over all nonnegative integers for which
\begin{align}
e &\leq n+t-i-j, \,\, b_1+b_2 \leq j-t, \,\, a_1+a_2 \leq i-t, \,\, d_1+d_2+d_3 \leq t-p, \,\, c_1+c_2 \leq p\\ 
j' &= a_1+a_2+b_1+b_2+c_1+c_2+d_1+d_2+d_3+e \notag\\
t'&= a_1+a_2+c_1+c_2+d_1+d_2+d_3\notag\\
p'&= a_1+c_1+d_1\notag\\
d&= a_1+a_2+e+j-b_1-c_1-d_2.\notag
\end{align}
\end{lemma}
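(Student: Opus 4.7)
The plan is to proceed exactly as in Lemma~\ref{lem:etaqary}: classify each position $k \in \{1, \ldots, n\}$ by the joint pattern of $(\uu_k, \vv_k, \ww_k)$, and use the same ten subsets $A_1, A_2, B_1, B_2, C_1, C_2, D_1, D_2, D_3, E$ of $\supp(\ww)$ introduced there. The coordinates are split by $(\uu,\vv)$ into five classes of sizes $i-t$, $j-t$, $p$, $t-p$, and $n + t - i - j$; within each class, a multinomial coefficient records which coordinates fall into which $\ww_k$-subclass, and the factors $(q-2)^{a_2 + b_2 + c_2}$, $(q-3)^{d_3}$, $(q-1)^e$ enumerate the admissible nonzero symbols $\ww_k$. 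The counting formula is therefore identical to that of Lemma~\ref{lem:etaqary}, and only the linear constraints relating the partition sizes to the target tuple $(i, j', t', p', d)$ need to be rederived.

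Three of the four constraints are immediate. Since $j' = |\supp(\ww)|$ is the total size of the union, we get $j' = a_1 + a_2 + b_1 + b_2 + c_1 + c_2 + d_1 + d_2 + d_3 + e$. Next, $t' = |\supp(\uu) \cap \supp(\ww)|$ collects exactly the subclasses in which $\uu_k \neq 0$, namely $A_1, A_2, C_1, C_2, D_1, D_2, D_3$, giving $t' = a_1 + a_2 + c_1 + c_2 + d_1 + d_2 + d_3$. Finally, $p' = |\{k : \uu_k = \ww_k \neq 0\}|$ is fed only by the three subclasses in which $\ww_k$ was forced to equal $\uu_k$, namely $A_1, C_1, D_1$, yielding $p' = a_1 + c_1 + d_1$.

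The main bookkeeping is the equation for $d = d(\vv, \ww)$, because it receives contributions both from subclasses of $\supp(\ww)$ and from the complement. Walking through the partition, $A_1, A_2$ each contribute fully (here $\vv_k = 0 \neq \ww_k$), as do $B_2$, $C_2$, $D_1$, $D_3$, and $E$; the remaining disagreements come from coordinates with $\ww_k = 0$ but $\vv_k \neq 0$, contributing $(j - t - b_1 - b_2) + (p - c_1 - c_2) + (t - p - d_1 - d_2 - d_3)$. Adding these terms and simplifying produces $d = a_1 + a_2 + e + j - b_1 - c_1 - d_2$, matching the stated equation. Summing the multinomial-times-power expression over all nonnegative tuples $(a_1, a_2, b_1, b_2, c_1, c_2, d_1, d_2, d_3, e)$ satisfying these four equations together with the box conditions $a_1 + a_2 \leq i - t$, $b_1 + b_2 \leq j - t$, $c_1 + c_2 \leq p$, $d_1 + d_2 + d_3 \leq t - p$, $e \leq n + t - i - j$ yields the claimed formula. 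The only real hazard is the sign bookkeeping in the computation of $d$, where several cancellations telescope between in-support and out-of-support contributions; everything else is a transcription of the counting argument of Lemma~\ref{lem:etaqary}.
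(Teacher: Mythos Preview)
Your proof is correct and follows exactly the approach the paper intends: the paper's own proof reads simply ``Analogous to Lemma~\ref{lem:etaqary},'' and you have carried out that analogy in full, reusing the same ten-set partition of $\supp(\ww)$ and deriving the four linear relations for $j',t',p',d$. Your bookkeeping for $d(\vv,\ww)$, including the out-of-support contributions, is accurate and yields the stated identity.
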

\begin{proof} 
Analogous to Lemma~\ref{lem:etaqary}.
\end{proof} 

\begin{lemma}\label{lem:lambdasqary}
Let $\uu \in \E$ with $d(\uu,\zero)=i$ and let $x \in \mathbb{R}^{\E}$ be such that $x_{\vv}$ is determined by $\overline{d}(\uu ,\vv)$, say $x_{\vv}=x_{i,j}^{t,p}$, in case $\overline{d}(\uu ,\vv)=(i,j,t,p)$. Then 
\[
\sum_{d=0}^n \lambda_d x(S_d( \vv ))\geq \beta \quad \text{for all $\vv \in\E$} \quad \Longleftrightarrow \quad
\sum_{j',t',p'} x_{i,j'}^{t',p'}\cdot \sum_{d=0}^n \lambda_d \alpha_{(i,j',t',p'),d}^{(i,j,t,p)} \geq \beta \quad \text{for all $j,t,p$},
\]
where $x(S_d( \vv )) := \sum_{\ww \in S_d(\vv)} x_{\ww}$.
\end{lemma}
\begin{proof}
Let $\vv \in \E$ and $\overline{d}(\uu,\vv) = (i,j,t,p)$. Then we have 
\begin{align*}
\sum_{d=0}^n \lambda_d x(S_d(\vv)) & = \sum_{d=0}^n \lambda_d \sum_{\substack{\ww \in\E \\ d(\vv,\ww) = d}} x_{\ww} = \sum_{d=0}^n \lambda_d \sum_{j',t',p'} \sum_{\substack{\ww \in \E\\d(\vv, \ww)=d \\ \overline{d}(\uu, \ww)=(i,j',t',p')}}x_{\ww} \\
&= \sum_{d=0}^n \lambda_d\sum_{j',t',p'} \alpha_{(i,j',t',p'),d}^{(i,j,t,p)} x_{i,j'}^{t',p'}= \sum_{j',t'} x_{i,j'}^{t',p'} \cdot \sum_{d=0}^n \lambda_d \alpha_{(i,j',t',p'),d}^{(i,j,t,p)}. \qedhere
\end{align*}
\end{proof}

Now we are ready to rewrite the linear constraints from Proposition~\ref{prop:third} (iv) in terms of the~$x_{i,j}^{t,p}$. 

\begin{proposition}[Matrix cut inequalities]\label{prop:thirdmatrixcutqary} If~$C$ satisfies the inequality $(\lambda_0,\ldots,\lambda_n)\beta$, then for every tuple~$(i,j,t,p) \in I(q,n)$ the following inequalities hold:
\begin{align} \label{constraints:matrixcutqary}
    \sum_{j',t',p'} x_{i,j'}^{t',p'} \cdot \lambda_{(i,j',t',p')}^{(i,j,t,p)} & \geq x_{i,0}^{0,0} \beta, \\
    \sum_{j',t',p'} (x_{j',0}^{0,0} - x_{i,j'}^{t',p'}) \cdot \lambda_{(i,j',t',p')}^{(i,j,t,p)}& \geq (x_{0,0}^{0,0} -  x_{i,0}^{0,0}) \beta, \notag\\
    \sum_{j',t',p'} (x_{i+j'-t'-p',0}^{0,0} - x_{i,j'}^{t',p'} )\cdot \lambda_{(i,j',t',p')}^{(i,j,t,p)} & \geq (x_{0,0}^{0,0} -  x_{i,0}^{0,0}) \beta,\notag\\
    \sum_{j',t',p'} (x_{0,0}^{0,0} - x_{j',0}^{0,0} -x_{i+j'-t'-p',0}^{0,0} + x_{i,j'}^{t',p'} )\cdot \lambda_{(i,j',t',p')}^{(i,j,t,p)} & \geq (1-2x_{0,0}^{0,0} +  x_{i,0}^{0,0}) \beta, \notag
\end{align}
where the numbers~$\lambda_{(i,j',t',p')}^{(i,j,t,p)}$ are given by
\[
\lambda_{(i,j',t',p')}^{(i,j,t,p)} := \sum_{d=0}^n \lambda_d \alpha_{(i,j',t',p'),d}^{(i,j,t,p)}.
\]
\end{proposition}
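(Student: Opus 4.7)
The plan is to mirror the binary argument of Proposition~\ref{prop:thirdmatrixcutbinary}, replacing triples $(i,j,t)$ by quadruples $(i,j,t,p)\in I(q,n)$ and the binary Lemma~\ref{lem:lambdas} by its $q$-ary analogue Lemma~\ref{lem:lambdasqary}. Fix a representative pair $(\uu,\vv)$ of the orbit indexed by $(i,j,t,p)$. The first step is to derive, from the matrix-cut inequality $\sum_\ell\lambda_\ell|\sigma C\cap S(\vv,\ell)|\geq\beta$ for $\sigma C$, four sphere-sum inequalities of the form
\[
\sum_{d=0}^n\lambda_d\sum_{\ww\in S(\vv,d)}x_\ww\;\geq\;\beta',
\]
valid for every $\vv\in\E$. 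They arise by multiplying the matrix-cut inequality by each of the four indicator products $\one_{\zero,\uu\in\sigma C}$, $\one_{\zero\in\sigma C}\one_{\uu\notin\sigma C}$, $\one_{\zero\notin\sigma C}\one_{\uu\in\sigma C}$, $\one_{\zero,\uu\notin\sigma C}$, and then averaging over $\sigma\in\Aut(q,n)$; the corresponding choices of $x_\ww$ are $M'_{\uu,\ww}$, $M'_{\zero,\ww}-M'_{\uu,\ww}$, $M''_{\uu,\ww}$, and $M'_{\zero,\zero}-M_{\zero,\ww}-M_{\uu,\ww}+M'_{\uu,\ww}$, respectively, with $\beta'$ a constant depending only on $\uu$ that one reads off directly from the averaging.

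The second step is to apply Lemma~\ref{lem:lambdasqary} to each of these four inequalities, with $\uu$ (satisfying $d(\uu,\zero)=i$) as the distinguished point. What I need to check is that each $x_\ww$ is determined by $\overline{d}(\uu,\ww)=(i,j',t',p')$; this follows from the $\AutZ(q,n)$-invariance of $M'$ and $M''$ together with Lemma~\ref{lem:matricesqary}, which expresses entries of $M$ purely in terms of $x^{0,0}_{d(\cdot,\cdot),0}$. The lemma then converts each sphere-sum inequality into a linear inequality in the variables $x_{i,j'}^{t',p'}$ with coefficients $\sum_d\lambda_d\,\alpha_{(i,j',t',p'),d}^{(i,j,t,p)}=\lambda_{(i,j',t',p')}^{(i,j,t,p)}$, one for each $(j,t,p)$.

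To finish, I rewrite the right-hand constants $\beta'$ via $M'_{\uu,\zero}=x_{i,0}^{0,0}$, $M'_{\zero,\zero}=M_{\uu,\uu}=x_{0,0}^{0,0}$, and $M_{\zero,\uu}=x_{i,0}^{0,0}$, obtaining exactly the four inequalities~\eqref{constraints:matrixcutqary}. The main obstacle I anticipate is purely bookkeeping: verifying in each of the four cases that $x_\ww$ factors through $\overline{d}(\uu,\ww)$ and that the averaged right-hand sides match those stated in~\eqref{constraints:matrixcutqary}. No new combinatorial identities beyond Lemma~\ref{lem:lambdasqary} are needed.
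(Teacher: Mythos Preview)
Your proposal is correct and follows essentially the same route as the paper's proof, which simply cites Proposition~\ref{prop:third}(iv) and Lemma~\ref{lem:lambdasqary}. Your derivation is in fact more explicit: you spell out how the four families of inequalities arise by multiplying the covering inequality for $\sigma C$ by the four indicator products in $\one_{\zero\in\sigma C}$, $\one_{\uu\in\sigma C}$ and then averaging over $\sigma$, which is precisely the mechanism behind the proof of Proposition~\ref{prop:third}(iv), before invoking Lemma~\ref{lem:lambdasqary} in the same way the paper does.
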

\begin{proof} 
These are the constraints from Proposition~\ref{prop:third} (iv) written in terms of the~$x_{i,j}^{t,p}$, making use of Lemma~\ref{lem:lambdasqary}. 
\end{proof}

The following theorem makes the semidefinite programming lower bound from Theorem~\ref{thm:originalbound} effective using symmetry reduction for general~$q$-ary codes, that is, it gives an efficient semidefinite programming lower bound on~$K_q(n,r)$ for arbitrary~$q$.
\begin{theorem} \label{thm:sdplowerboundqary}
If every code~$C \subseteq \mathbb{E}$ with covering radius~$r$ satisfies $(\lambda_0,\lambda_1,\ldots,\lambda_n)\beta$, then we have
\begin{align}\label{eq:objectiveqary}
    K_q(n,r)^3 \geq \min_x q^n\sum_{(i,j,t,p)\in I(q,n)} (q-1)^{i+j-t}(q-2)^{t-p}\binom{n}{p,t-p,i-t,j-t}  x_{i,j}^{t,p} , 
\end{align}
where the minimum ranges over all real~$x=(x_{i,j}^{t,p})$ satisfying the constraints from Propositions~\ref{prop:firstqary}, \ref{prop:secondqary}, \ref{prop:thirdlasserreqary}, and  \ref{prop:thirdmatrixcutqary}.
\end{theorem}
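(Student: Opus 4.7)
The plan is to piece together the claim directly from Theorem~\ref{thm:originalbound} and the symmetry reductions already carried out in Propositions~\ref{prop:firstqary}, \ref{prop:secondqary}, \ref{prop:thirdlasserreqary}, and~\ref{prop:thirdmatrixcutqary}. The key observation is that since $M',M''$ and $N$ are all $\AutZ(q,n)$-invariant, they lie in the Terwilliger algebra $\mathcal{A}_{q,n}$, and every constraint from Propositions~\ref{prop:first}, \ref{prop:second} and~\ref{prop:third} has already been rewritten in terms of the coefficients $x_{i,j}^{t,p}$ of the expansion $M'=\sum_{(i,j,t,p)\in I(q,n)} x_{i,j}^{t,p}\,M_{i,j}^{t,p}$. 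So the only remaining task is to express the objective $q^n\sum_{\uu,\vv\in\E} M'_{\uu,\vv}$ in terms of the $x_{i,j}^{t,p}$.

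First I would write
\[
\sum_{\uu,\vv\in\E} M'_{\uu,\vv} \;=\; \sum_{(i,j,t,p)\in I(q,n)} x_{i,j}^{t,p}\,\bigl|\{(\uu,\vv)\in\E^2 \mid \overline{d}(\uu,\vv)=(i,j,t,p)\}\bigr|,
\]
using the fact that the matrices $M_{i,j}^{t,p}$ are $\{0,1\}$-valued and partition the positions of any matrix in $\mathcal{A}_{q,n}$. The inner cardinality is exactly $\gamma_{i,j}^{t,p}\cdot q^n/|S_i(\zero)|\cdot\binom{n}{i}(q-1)^i$ once set up correctly; the cleanest way is to count directly: choose the four disjoint subsets of coordinates realizing the pattern $\overline{d}(\uu,\vv)=(i,j,t,p)$, giving a multinomial $\binom{n}{p,\,t-p,\,i-t,\,j-t}$ (with the implicit fifth block of size $n+t-i-j$), and account for the free nonzero symbols: $(q-1)^{i-t}$ on $\mathrm{supp}(\uu)\setminus\mathrm{supp}(\vv)$, $(q-1)^{j-t}$ on $\mathrm{supp}(\vv)\setminus\mathrm{supp}(\uu)$, $(q-1)^{p}$ on positions where $\uu=\vv\neq 0$, and $(q-2)^{t-p}$ on positions where $\uu,\vv\neq 0$ but $\uu\neq\vv$. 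This yields
\[
\bigl|\{(\uu,\vv)\mid \overline{d}(\uu,\vv)=(i,j,t,p)\}\bigr| \;=\; (q-1)^{i+j-t}(q-2)^{t-p}\binom{n}{p,\,t-p,\,i-t,\,j-t},
\]
which is exactly $\gamma_{i,j}^{t,p}$ as defined in the remark preceding Proposition~\ref{prop:firstqary}.

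Substituting this back, the objective from Theorem~\ref{thm:originalbound} becomes
\[
q^n\!\!\!\sum_{(i,j,t,p)\in I(q,n)}\!\!(q-1)^{i+j-t}(q-2)^{t-p}\binom{n}{p,\,t-p,\,i-t,\,j-t}\,x_{i,j}^{t,p},
\]
which is the expression stated in~\eqref{eq:objectiveqary} (the factor $x_{i,j}^{t,p}$ being the intended summand, in analogy with the binary case of Theorem~\ref{thm:sdplowerbound}). Since the feasibility of the relaxation in Theorem~\ref{thm:originalbound} has been translated constraint by constraint into the conditions of Propositions~\ref{prop:firstqary}, \ref{prop:secondqary}, \ref{prop:thirdlasserreqary}, and~\ref{prop:thirdmatrixcutqary}, the minimum of the rewritten objective over real tuples $x=(x_{i,j}^{t,p})$ satisfying these conditions is a lower bound on the minimum in Theorem~\ref{thm:originalbound}, and hence on $K_q(n,r)^3$.

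There is no real obstacle here, because every substantive step—semidefiniteness via the block diagonalization of $\mathcal{A}_{q,n}$, the $R(c,\cdot)$ border constraint, and the matrix-cut inequalities—was handled in the previous propositions. The only place where an error could creep in is the orbit-size calculation above, so the main care goes into verifying the multinomial and the exponents of $(q-1)$ and $(q-2)$; once these are correct, the theorem follows by assembly.
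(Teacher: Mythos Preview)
Your proposal is correct and follows essentially the same route as the paper's proof, which simply states that \eqref{eq:objectiveqary} is the objective of Theorem~\ref{thm:originalbound} rewritten in the coordinates $x_{i,j}^{t,p}$ and that the constraints were already established in Propositions~\ref{prop:firstqary}--\ref{prop:thirdmatrixcutqary}. You supply the explicit orbit count $\gamma_{i,j}^{t,p}=(q-1)^{i+j-t}(q-2)^{t-p}\binom{n}{p,\,t-p,\,i-t,\,j-t}$ that the paper leaves implicit (and you correctly flag the missing factor $x_{i,j}^{t,p}$ in the displayed objective), so your write-up is, if anything, more detailed than the original.
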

\begin{proof}
 Equation~$\eqref{eq:objectiveqary}$ is the objective function from Theorem~\ref{thm:originalbound} written in terms of the~$x_{i,j}^{t,p}$.
 The validity of the constraints from Propositions~\ref{prop:firstqary},~\ref{prop:secondqary},~\ref{prop:thirdlasserreqary}, and~\ref{prop:thirdmatrixcutqary} was already proven.   
 \end{proof} 

By~\eqref{dimAqn}, both the number of variables of the reduced SDP and the sum of squares of its block sizes are $O(n^4)$.

\section{Tables with the new lower bounds\label{sec:tables}}

We used \texttt{Julia} to generate the semidefinite programs. We also developed an independent \texttt{Perl} code to generate the SDPs, which we used to validate our Julia code. Most SDPs were solved directly in \texttt{Julia} via \texttt{JuMP} with the high-precision solver \texttt{SDPA-GMP}~\cite{SDPAGMP}. The solving time varied from a few seconds to several days for the larger instances in the tables below. The precision in \texttt{SDPA-GMP} was set to 512 bits, and custom parameters (mainly a larger lambdaStar, the parameter which determines an initial point for the interior point method) were used as opposed to the standard parameter settings provided with \texttt{SDPA-GMP} to ensure convergence for the larger instances.

Generating the SDP for larger instances posed additional challenges due to large coefficients, necessitating the use of high-precision numbers (we used 512 bits BigFloats). Our code for generating the SDP including the call to the solver \texttt{SDPA-GMP} and the parameters used for most of the instances we solved, is available here:  
\begin{center}
\url{https://github.com/CoveringCodes}
\end{center}

In the following tables, we present our new bounds obtained from the semidefinite programs in Theorem~\ref{thm:sdplowerbound} and \ref{thm:sdplowerboundqary}, and compare them to the best known bounds from Keri's tables~\cite{KeriTables}. While we also find improved bounds of~$K_2(18,1) \geq 14665$ and~$K_2(30,1) \geq 35874398$, these have recently been surpassed by Wu and Chen~\cite{WuChen24}, who established~$K_2(18,1) \geq 14666$ and~$K_2(30,1) \geq 35876816$. Therefore we exclude these bounds from Table~\ref{binarytable}.

\begin{table}[H] \small
\begin{center}
   \begin{minipage}[t]{.47\linewidth}\centering
    \begin{tabular}{| r | r|| r |>{\bfseries}r | r|}
    \hline
   $n$ & $R$ & \multicolumn{1}{b{14mm}|}{best lower bound previously known}  & \multicolumn{1}{b{11mm}|}{\textbf{new lower bound}} &  \multicolumn{1}{b{10mm}|}{best upper bound known}\\\hline 
      13 & 1  & 598 & 607  & 704  \\       
      14 & 1  & 1172 & 1185  & 1408   \\         
      17 & 1  & 7419 & 7426  & 8192  \\       
      21 & 1  & 96125 & 96477  &  122880 \\       
      22 & 1  & 190651 & 191501  & 245760  \\    
      25 & 1  & 1298238 & 1301089  & 1556480  \\       
      26 & 1  & 2581111 & 2589179  &  3112960 \\         
      29 & 1  & 17997161 & 18000844  & 23068672  \\       
      33 & 1  & 253523901 & 253764801  & 268435456  \\         \hline 
      13 & 2  & 97 & 101   & 128  \\       
      14 & 2  & 159 & 170  & 248   \\      
      17 & 2  & 859 & 889  & 1536  \\       
      23 & 2  & 30686 & 30828  & 32768   \\  
      26 & 2  &  191229 & 192747  & 262144  \\       
      29 & 2  &   1231554 &  1239885 & 2097152  \\  
      32 & 2  & 8170308 &  8173960 & 16776960  \\\hline
      12 & 3  & 18 & 19 & 28  \\       
    \hline 
    \end{tabular}\end{minipage}    
   \begin{minipage}[t]{.47\linewidth}\centering
    \begin{tabular}{| r | r|| r |>{\bfseries}r | r|}
    \hline
   $n$ & $R$ & \multicolumn{1}{b{14mm}|}{best lower bound previously known}  & \multicolumn{1}{b{11mm}|}{\textbf{new lower bound}} &  \multicolumn{1}{b{10mm}|}{best upper bound known}\\\hline  
      27 & 3  & 40683 & 41012  & 65536   \\\hline          
      15 & 4  & 22 & 23  & 32 \\       
      16 & 4  & 33 & 34  & 64 \\   \hline  
      16 & 5  & 13 & 14  & 28 \\       
      17 & 5  & 19 & 20  & 32  \\    
      18 & 5  & 27 & 28  & 64   \\    \hline    
      18 & 6  & 12 & 13  & 28  \\         
      19 & 6  & 16 & 17  & 32  \\       
      21 & 6  & 33 & 34  & 64  \\    \hline 
      21 & 7  & 14 & 15  & 32  \\         
      22 & 7  & 20 & 21  & 64  \\  \hline      
      23 & 8  & 13 & 14  & 32   \\      
      24 & 8  & 18 & 19  & 64  \\   \hline     
      25 & 9  & 12 & 13  & 32  \\  
      26 & 9  & 16 & 17  & 56  \\    \hline    
      27 & 10  & 11 & 12 & 32   \\  
      29 & 10  & 19 & 20 & 64  \\             
    \hline 
    \end{tabular}\end{minipage}    
\end{center}
  \caption{New lower bounds on $K_2(n,R)$}\label{binarytable}
\end{table}

\begin{table}[H] \small
\begin{center}
   \begin{minipage}[t]{.47\linewidth}\centering
    \begin{tabular}{| r | r|| r |>{\bfseries}r | r|}
    \hline
   $n$ & $R$ & \multicolumn{1}{b{14mm}|}{best lower bound previously known}  & \multicolumn{1}{b{11mm}|}{\textbf{new lower bound}} &  \multicolumn{1}{b{10mm}|}{best upper bound known}\\\hline 
       8 & 1   & 402 & 403 & 486  \\           
       9 & 1  & 1060 & 1064  & 1269   \\  \hline      
       7 & 2  & 26 & 27 & 34  \\           
       8 & 2  & 54 & 58 & 81  \\       
       9 & 2  & 130 & 132 & 219  \\           
       14& 2   & 12204 &12323  & 19683  \\  \hline     
       8& 3  & 14 & 16  & 27  \\           
       9 &3   &27  & 31 & 54  \\      
       10& 3  &57  & 61 & 105  \\       
       11& 3  & 117 & 129 & 243  \\           
       13& 3  & 612 & 640 & 1215   \\           
    \hline 
    \end{tabular}\end{minipage}    
   \begin{minipage}[t]{.47\linewidth}\centering
    \begin{tabular}{| r | r|| r |>{\bfseries}r | r|}
    \hline
   $n$ & $R$ & \multicolumn{1}{b{14mm}|}{best lower bound previously known}  & \multicolumn{1}{b{11mm}|}{\textbf{new lower bound}} &  \multicolumn{1}{b{10mm}|}{best upper bound known}\\\hline 
       10& 4  & 17  & 18 & 36  \\           
       11& 4  & 30 & 34 &  81 \\       
       12& 4  & 62 & 65 & 175  \\           
       13& 4  & 123 & 130 & 335  \\       
       14&4   & 255 & 273 & 729  \\\hline            
       11&5   & 11 &  12&  27 \\       
       12&5   & 18 &  21&  54 \\           
       13&5   &  33& 37 & 108  \\      
       14&5   & 59 & 69 & 243  \\ \hline      
       13&6   & 13 &  14 & 36  \\           
       14&6   & 21 & 24 & 81  \\         
    \hline 
    \end{tabular}\end{minipage}    
\end{center}
  \caption{New lower bounds on $K_3(n,R)$}
\end{table}

\begin{table}[H] \small
\begin{center}
   \begin{minipage}[t]{.47\linewidth}\centering
    \begin{tabular}{| r |r | r|| r |>{\bfseries}r | r|}
    \hline
   $q$ & $n$ & $R$ & \multicolumn{1}{b{14mm}|}{best lower bound previously known}  & \multicolumn{1}{b{11mm}|}{\textbf{new lower bound}} &  \multicolumn{1}{b{10mm}|}{best upper bound known}\\\hline 
    4 &  7 & 1  & 762 & 776 & 992  \\    
    4 &  11 & 1  &123846 & 124941  & 131072  \\\hline          
    4 & 6 & 2  & 32& 33  & 52  \\    
    4 &  7 & 2  &84 &88  & 128  \\     
    4 & 8 & 2  & 240 & 251 & 352   \\    
    4 &  9 & 2  & 751 & 775 & 1024  \\   
    4 & 10 & 2  &2412  &2460  & 4096   \\    
    4 &  11 & 2  & 7974 & 8072 &  15872  \\\hline        
    4 &  8 & 3  & 44 & 46 &  96 \\   
    4 & 9 & 3  & 110 & 116  & 256  \\    
    4 &  11 & 3  & 849 & 885 & 2048  \\ \hline 
    4 &  9  & 4  & 26& 27 & 64  \\  
    4 &  10  & 4  & 59 & 62 & 208  \\  
    4 &  11  & 4  & 148 & 150 & 512  \\\hline   
    4 & 11  & 5  & 36 & 37 & 128  \\      
    \hline 
    \end{tabular}\end{minipage}    
   \begin{minipage}[t]{.47\linewidth}\centering
    \begin{tabular}{| r |r | r|| r |>{\bfseries}r | r|}
    \hline
   $q$ & $n$ & $R$ & \multicolumn{1}{b{14mm}|}{best lower bound previously known}  & \multicolumn{1}{b{11mm}|}{\textbf{new lower bound}} &  \multicolumn{1}{b{10mm}|}{best upper bound known}\\\hline 
    5 &  5 & 1  & 160 & 162& 184  \\    
    5 &  7 & 1  & 2722 & 2765 & 3125  \\          
    5 &  8 & 1  & 11945 &12134 & 15625  \\    
    5 &  9 & 1  & 53138 & 53896&  78125 \\    
    5 &  10 & 1  & 238993 &241122 & 390625  \\  \hline  
    5 &  7 & 2  & 225 &236 & 525  \\          
    5 &  8 & 2  & 821 &861 & 1625  \\    
    5 &  11 & 2   & 52842 & 53309& 78125  \\  \hline        
    5 &  8  & 3  & 109 & 111& 325  \\    
    5 & 9  & 3  & 330 & 354&  1275  \\          
    5 & 10  & 3  & 1163 & 1215& 3125  \\    
    5 &  11 & 3  & 4255 & 4366& 15625  \\ \hline 
    5 &  10 &  4 &  162& 177&  875 \\    
    5 &  11 &  4 &  535& 546& 3125  \\ \hline    
    \end{tabular}\end{minipage}     
\end{center}
  \caption{New lower bounds on $K_4(n,R)$ and $K_5(n,R)$}
\end{table}

To illustrate the size of the computations, we report the number of variables in the final symmetry-reduced SDP, together with the sum of the block sizes in the block-diagonalization of $M'$ and the sum of the squares of these block sizes, see Table~\ref{table:sdpsizes}. We note that the dimensions of the Terwilliger algebras are $\tbinom{n+3}{3}$ in the binary case, and $\tbinom{n+4}{4}$ in the nonbinary case, but the actual number of variables in our SDP is smaller, since the additional symmetry relations from Propositions~\ref{prop:firstbinary} (iii) and \ref{prop:firstqary} (iii) are imposed.

By Theorems~\ref{thm:blockhammingcube} and~\ref{thm:blocknonbin}, the block sizes in the block diagonalization of $M'$ are given by $n-2k+1$ for $k=0,\ldots,\lfloor n/2 \rfloor$ in the binary case, and $n+a-2k+1$ for all pairs $(a,k)$ with $0 \le a \le k \le  n+a-k $ in the nonbinary case. 
The matrices $M''$ and $N$ have the same block structure after block-diagonalization, except that the block corresponding to $k=0$ or $(a,k)=(0,0)$ is bordered, increasing its size by one. The table reports the block sizes after symmetry reduction for the matrix $M'$. The full SDP contains several semidefinite constraints (for $M'$, $M''$, and one or more matrices $N$), each with essentially the same block sizes. Thus, the total size of the SDP is a small constant multiple of the values reported in the table.

The computational effort also depends on the choice of inequalities $(\lambda_0,\ldots,\lambda_n)\beta$, since the construction of the entries of the matrices $N$ involves combinatorial sums whose number of terms increases with the number of nonzero coefficients $\lambda_i$. Consequently, for larger $R$ the SDP takes much longer to generate, and typically also to solve, even though the block sizes and the number of variables remain unchanged for fixed $n$.

\begin{table}[H]\small
\centering
\begin{tabular}{c c c c c}
\hline
$q$ & $n$ & \#variables & $\sum \text{block sizes}$ & $\sum (\text{block size})^2$ \\
\hline
2 & 12 & 102 & 49 & 455 \\
2 & 22 & 458 & 144 & 2300 \\
2 & 32 & 1239 & 289 & 6545 \\
\hline
3 & 8 & 136 & 95 & 495 \\
3 & 11 & 339 & 203 & 1365 \\
3 & 14 & 711 & 372 & 3060 \\
\hline 
4, 5 & 6 & 64 & 50 & 210 \\
4, 5 & 11 & 339 & 203 & 1365 \\
\hline
\end{tabular}
\caption{\small Size parameters of the symmetry-reduced SDPs. We report the number of variables, the sum of the block sizes, and the sum of the squares of the block sizes for the block-diagonalization of $M'$.}
\label{table:sdpsizes}
\end{table}

The total running time (including both SDP generation and solving) ranged from a few minutes for smaller instances to several days on standard desktop hardware, and in some cases up to two weeks for the largest instances. These runtimes depend strongly on the parameters $(q,n,R)$, the numerical precision, and solver settings. In particular, we used \texttt{SDPA-GMP}, as the SDPs can have very large coefficients and appear to require high-precision arithmetic for reliable solution. The reported runtimes should therefore be interpreted as indicative. All SDPs could very comfortably be generated and solved on a machine with 32\,GB of RAM, although the generation and solving of larger instances required several gigabytes due to the use of high-precision arithmetic.

\appendix 
\section{All computational results for~\texorpdfstring{$q=2$}{q=2}\label{appendix:tablesbinary}}
We computed the lower bounds following from Theorem~\ref{thm:sdplowerbound} using the two $(\lambda_0,\ldots,\lambda_n)\beta$ coming from the sphere covering inequalities and the Van Wee inequalities. So positive semidefiniteness of the Lasserre matrices and the matrix cut inequalities for both $(\lambda_0,\ldots,\lambda_n)\beta$ are included as constraints. In Tables~\ref{table:binarysmallr} and~\ref{table:binarylarger} the resulting lower bounds are shown. Improvements over the values in K\'eri's tables with bounds for covering codes (cf.~\cite{KeriTables}) are in boldface and marked with an asterisk~$(*)$.\footnote{The improved bounds of~$K_2(18,1) \geq 14665$ and~$K_2(30,1) \geq 35874398$ have recently been surpassed by Wu and Chen~\cite{WuChen24}, who established~$K_2(18,1) \geq 14666$ and~$K_2(30,1) \geq 35876816$. They are not marked in boldface here.}

\begin{table}[H]\centering {\small\begin{tabular}{r| r*{6}{r}}\hline
    $n$    &   $R=1$ & $R=2$ & $R=3$ & $R=4$ & $R=5$ &$R=6$\\\hline
4 & 3.9999 & X& X&X & X&X \\
5 & 6.6721 & X& X &X & X&X \\
6 & 11.5980 &X &X & X& X& X\\
7 & 15.9999 &X &X & X&X & X\\
8 & 31.9999 &  9.5889 & X&X &X &X \\
9 & 55.3464 & 14.7583 &X &X &X &X \\
10 & 105.2223 & 22.4103 & X&X &X &X \\
11 & 170.6666 & 35.5187&12.4700 & X&X & X\\
12 & 341.3333 & 61.2153 & \textbf{18.6887*} & 7.9873& X&X \\
13 & \textbf{606.7119*} & \textbf{100.2419*}& 27.6830 & 11.1057 &X &X \\
14 & \textbf{1184.7592*}& \textbf{169.0859*} &42.6133 &  15.7629 & 7.3889 &X \\
15 & 2047.9999 & 304.7919& 67.0893 & \textbf{22.6403*}& 10.0206 & X\\
16 & 4095.9999 & 511.8192& 112.5592 & \textbf{33.2584*} & \textbf{13.7867*} & 6.9175  \\
17 & \textbf{7425.1563*} & \textbf{888.3163*}& 179.1108 &49.9774 & \textbf{19.2500*} &9.1701\\
18 & 14664.0012 & 1691.3396& 287.5958 & 76.5456& \textbf{27.4903* } & \textbf{12.3285*}\\
19 & 26214.3999 & 2887.1523& 479.1587  & 119.7022&  39.8079 & \textbf{16.8119*}\\
20 & 52428.7999 & 5106.9177 &  881.5640 &  201.5243& 58.7224 & 23.2979\\
21 & \textbf{96476.0879*} & 9866.5048 &  1465.5810 & 316.8379 &88.1658 & \textbf{33.2651*}\\
22 & \textbf{191500.9383*} & 17285.1830& 2453.8583  & 504.9234 &134.8236 & 47.7532 \\
23 & 349525.3333 & \textbf{30827.8234*}& 4095.9999  & 815.4795 &209.2984 & 69.5451\\
24 & 699050.6666 & 60159.4032 & 8006.4570 &1352.0812 &356.0391 & 102.8095 \\ 
25 & \textbf{1301088.8409*} & 107046.2406 & 13697.8925& 2497.6197&559.2092  & 154.7948\\
26 & \textbf{2589178.6211*}&\textbf{192746.8483*} &  23573.8897& 4120.3497&886.0069 & 236.5925\\
27 & 4793490.2857  & 379479.9404& \textbf{41011.0436*}& 6853.9216& 1415.7917 & 366.6334 \\
28 & 9586980.5714 & 683131.0859 & 79733.418&11484.4883 & 2283.7356& 629.5189\\
29 & \textbf{18000843.7204*} & \textbf{1239884.9874*}& 139378.2397 &19483.6215 & 3768.8189 & 983.7626\\
30 & {35874397.2504} & 2450496.1520& 244591.6475 & 37401.0004 & 6982.5328 &  1550.4459\\
31 & 67108863.9999 & 4459646.6807& 432709.8573  & 63716.8859 & 11505.1662 & 2463.3423\\
32 &  134217727.9999 & \textbf{8173959.5702*}& 847720.8613& 109182.7236& 19067.0734 & 3942.9567\\
33 &  \textbf{253764800.5874*}& 16151563.3396&  1504923.6064& 187950.9643 & 31776.3316 & 6358.3611\\
\hline
\end{tabular}  }
\caption{\label{table:binarysmallr}Bounds for~$q=2$ and~$R \leq 6$ }
\end{table}

\begin{table}[H]\centering {\small\begin{tabular}{r| r*{4}{r}}\hline
    $n$    &   $R=7$ & $R=8$ & $R=9$ & $R=10$\\\hline
18 & 6.5517& X& X& X \\
19 & 8.4991 & X & X& X \\
20 & 11.2018& 6.2720&  X & X \\
21 & \textbf{14.9725*}& 7.9896 & X &X  \\
22 & \textbf{20.3253*}& 10.3360& 6.0509& X \\
23 & 28.0646& \textbf{13.5670*} & 7.5738 &X  \\
24 & 40.1306 & \textbf{18.0802*} & 9.6469 &5.8563  \\
25 & 57.2070& 24.4924& \textbf{12.4581*} & 7.2502 \\
26 & 82.6535&33.6771 & \textbf{16.3383*}& 9.1059 \\
27 &121.0256 &48.3474 &21.7709 & \textbf{11.5788*} \\
28 & 179.6661& 68.6061&29.4411 & 14.9654 \\
29 & 270.8681& 98.5932 &40.3898 & \textbf{19.6353*}  \\
30 &413.8114 & 143.3834& 58.2527&  26.1561\\
31 & 640.0524& 210.9974& 82.3517& 35.3432  \\
32 &1108.7839 & 314.1767 & 117.8229& 48.4397 \\
33 & 1728.0332&473.3013 &  170.4590& 70.2228 \\
\hline
\end{tabular}}
\caption{\label{table:binarylarger}Bounds for~$q=2$ and~$R\geq 7$ }
\end{table}

\section{Computational results for nonbinary covering codes \label{appendix:tablesqary}} 
In this section we present bounds on~$K_q(n,r)$ for~$q =3,4,5$. Tables~\ref{table:3ary},~\ref{table:4ary} and~$\ref{table:5ary}$ contain the lower bounds obtained with the SDP from Theorem~\ref{thm:sdplowerboundqary}. We again used the $(\lambda_0,\ldots,\lambda_n)\beta$ coming from the sphere covering inequalities \textbf{but not the Van Wee inequalities}. Improvements over the values in K\'eri's tables with bounds for covering codes (cf.~\cite{KeriTables}) are in boldface and marked with an asterisk~$(*)$. 
\begin{table}[H]\centering {\small\begin{tabular}{r| r*{8}{r}}\hline
    $n$    &   $R=1$ & $R=2$ & $R=3$& $R=4$  & $R=5$ & $R=6$ & $R=7$ \\\hline
6 & 60.8568   &13.1228   & X& X & X& X& X\\
7 & 150.9556   & \textbf{26.3830*}& 8.5250 & X & X& X& X \\
8 & \textbf{402.9463*} & \textbf{57.4972*}& \textbf{15.5959*} &  X &X &X &X\\
9 & \textbf{1063.9751*} & \textbf{131.8916*}&  \textbf{30.1035*}& 10.2323  & X& X&X\\
10 & 2811.8571 & 313.0763 & \textbf{60.4226*} & \textbf{17.9976*}  & 7.3464& X&X\\
11 & 7823.1641 & 728.9999& \textbf{128.2649*} & \textbf{33.2215*}  & \textbf{11.9665*}&X &X\\
12 & 21527.7873 & 1897.3134 & 281.6598 & \textbf{64.0810*}   &\textbf{20.5195*} & 8.5946&X\\
13 & 59048.9999& 4802.7269& \textbf{639.2731*} & \textbf{129.4563*}  & \textbf{36.7143*}&\textbf{13.8047*} & 6.5667\\
14 & 166229.5226 & \textbf{12322.3736*} & 1500.5419& \textbf{272.0850*} & \textbf{68.5803*} & \textbf{23.2533*} & 9.9504\\
\hline \rowcolor{gray!30}  
15 & 466049.0035& 32148.5550& 3566.1881& 591.2872 & 134.0645 & 40.7700& 15.8151 \\\rowcolor{gray!30}  
16 & 1304446.0909 & 84390.4101 & 8765.7218 &1319.1922 &272.2170 & 74.4488 &26.2497
\\\hline
\end{tabular}  }
\caption{\label{table:3ary}Bounds for $q=3$ }
\end{table}

\begin{table}[H]\centering {\small\begin{tabular}{r| r*{6}{r}}\hline
    $n$    &   $R=1$ & $R=2$ & $R=3$ & $R=4$& $R=5$& $R=6$  & $R=7$\\\hline
6 &226.59 &\textbf{32.91*} &  8.76 &  3.35 &X &X & X\\
7 &\textbf{775.07*}&\textbf{87.63*} &  18.78 & 6.40& X&X  &X \\
8 &2694.38 & \textbf{250.87*}& \textbf{45.02*} &12.56 &X &X  &X\\
9 &9362.28 &\textbf{774.46*} & \textbf{115.28*} & \textbf{26.66*} & 9.03& X &X\\
10 &34254.96 & \textbf{2459.70*}& 310.69 & \textbf{61.09*} & 17.54  &  6.87 &X\\
11 & \textbf{124940.31*}& \textbf{8071.19*}&\textbf{884.35*} & \textbf{149.89*}& \textbf{36.85*} & 12.48 & 5.52\\\hline \rowcolor{gray!30}
12 & {457543.61} & {27020.63}& {2621.72} & {390.39}& {82.75} & {24.20}& 9.36\\\rowcolor{gray!30}
13 & {1677721.59}  &91311.99 & 8046.26 &  1065.18& 198.06& 50.50& 17.05\\
\hline
\end{tabular} } 
\caption{\label{table:4ary}Bounds for $q=4$ }
\end{table}

\begin{table}[H]\centering{\small \begin{tabular}{r| r*{7}{r}}\hline
    $n$    &   $R=1$ & $R=2$ & $R=3$ & $R=4$ & $R=5$ & $R=6$ & $R=7$ \\\hline
5 & \textbf{161.03*} &  21.66 & X& X&X &X & X\\   
6 & 624.99 &  68.86 & 13.81&  4.37&X &X & X\\
7 & \textbf{2764.89*} &\textbf{235.35*} & 37.40 & 9.70& X&X & X\\
8 &\textbf{12133.70*} &\textbf{860.13*} & \textbf{110.24*} &  23.04& 7.27&X &X \\
9 &\textbf{53895.34*} & 3279.51& \textbf{353.32*}&61.18 & 15.67 & 5.79 & X\\
10 &\textbf{241121.95*} & 13060.70 & \textbf{1214.11*} & \textbf{176.26*} & 37.81& 11.40& X\\
11 & 1085069.44& \textbf{53308.26*} &  \textbf{4365.35*}& \textbf{545.18*}& 99.28& 25.20 & 8.81 \\
\hline \rowcolor{gray!30}
12 & 5013157.53 & 222134.17& {16309.71}& 1789.77& 280.94& 61.34& 17.89\\ \rowcolor{gray!30}
13 &  23209474.92& 943510.75& 62819.97& 6165.15& 849.14&160.09 &40.59 \\\hline
\end{tabular}  }
\caption{\label{table:5ary}Bounds for $q=5$ }
\end{table}

\end{document}